\newtheorem{thm}{Theorem}[section]
\newtheorem{lemma}[thm]{Lemma}
\newtheorem{prop}[thm]{Proposition}
\newtheorem{rem}[thm]{\bf Remark}
\newtheorem{defn}[thm]{Definition}
\newcommand{\rank}{{\rm rank}}
\newcommand{\CC}{\mathbb{C}}
\newcommand{\RR}{\mathbb{R}}
\newcommand{\PP}{\mathbb{P}}
\newcommand{\TT}{\mathcal{T}}
\newcommand{\mb}[1][]{\mathbf}
\newcommand{\m}[1]{\mb{m_{#1}}}
\newcommand{\x}{\mb{x}}
\newcommand{\X}{\mb{X}}
\newcommand{\bs}[1][]{\boldsymbol}
\newcommand{\bt}{\boldsymbol{\tau}}
\newcommand{\eqn}{\begin{eqnarray}}
\newcommand{\feqn}{\end{eqnarray}}
\newcommand{\Sing}{\operatorname{Sing}}
\title[The algebro-geometric study of TOA maps]{The algebro-geometric study of range maps}
\author[M. Compagnoni]{Marco Compagnoni}
\address[M. Compagnoni]{Dipartimento di Matematica, Politecnico di Milano, Via Bonardi 9, I-20133 Milan, Italy}
\email{marco.compagnoni@polimi.it}
\author[R. Notari]{Roberto Notari}
\address[R. Notari]{Dipartimento di Matematica, Politecnico di Milano, Via Bonardi 9, I-20133 Milan, Italy}
\email{roberto.notari@polimi.it}
\author[A.A. Ruggiu]{Andrea Alessandro Ruggiu}
\address[A.A. Ruggiu]{Department of Mathematics, Link\"oping University, SE-58183 Link\"oping, Sweden}
\email{andrea.ruggiu@liu.se}
\author[F. Antonacci]{Fabio Antonacci}
\address[F. Antonacci]{Dipartimento di Elettronica, Informazione e Bioingegneria, Politecnico di Milano, Piazza L. Da Vinci 32, I-20133 Milan, Italy}
\email{fabio.antonacci@polimi.it}
\author[A. Sarti]{Augusto Sarti}
\address[A. Sarti]{Dipartimento di Elettronica, Informazione e Bioingegneria, Politecnico di Milano, Piazza L. Da Vinci 32, I-20133 Milan, Italy}
\email{augusto.sarti@polimi.it}
\begin{document}

\begin{abstract}
Localizing a radiant source is a widespread problem to many scientific and technological research areas. E.g. localization based on range measurements stays at the core of technologies like radar, sonar and wireless sensors networks. In this manuscript we study in depth the model for source localization based on range measurements obtained from the source signal, from the point of view of algebraic geometry. In the case of three receivers, we find unexpected connections between this problem and the geometry of Kummer's and Cayley's surfaces. Our work gives new insights also on the localization based on range differences.
\end{abstract}

\maketitle


\section{Introduction}\label{sec:intro}

Numerous problems in science and engineering are formulated in terms of distances (or ranges) between pairs of points in a given set. Without any claim to exhaustiveness, we could list a number of research fields where this problem plays a key role: radar and sonar technology, wireless sensor networks, statics, robotics, molecular conformation and dimensionality reduction in statistics and machine learning. We refer to \cite{Liberti2014} for a very useful introduction to these subjects.

In this manuscript, we focus on the problem of localizing a radiant source using range measurements. This is the prototypical problem in active localization technologies such as radar and active sonar \cite{Quazi1981}. In these situations, the measurements are time delays between the transmission of a pulse signal and the reception of its echo. Assuming the speed of propagation as known and costant, the Time Of Arrival (TOA) of the signal is directly related to the range between the source and the corresponding emitter/receiver. The goal of localization is to find the source position from the TOAs.

Localization is a fundamental problem for wireless sensor networks as well \cite{Weiser1993,Forman1994}. Indeed, the network routers must be updated of the positions of the sensors (e.g. smartphones), in order to adapt routes, frequencies, and network ID data accordingly.
It has been showed in \cite{Savvides2001} that the distance between any pair of sufficiently close sensors is strongly correlated to the battery charge used in their communications. Furthermore, by the fact that the positions of the fixed elements of the network (e.g routers and repeaters) are known, it follows that wireless sensor networks localization has many similarities to a multi--source localization problem based on TOA measurements.

In the mathematical literature, the problems involving ranges measurements have been intensively studied in the context of Euclidean Distance Geometry (DG) \cite{Liberti2014}. The fundamental problem in DG can be formulated in terms of the embeddability of a weighted graph $G=(V,E)$ into a suitable $k$--dimensional Euclidean space. Roughly speaking, one has to understand when the set $V$ of vertices $p_i,\ i=1,\dots,n$ actually corresponds to a set of points $\phi(p_i),\ i=1,\dots,n$ in $\RR^k,$ where the Euclidean distance $\Vert\phi(p_i)-\phi(p_j)\Vert$ is equal to the weight of the edge $e_{ij}\in E.$
The most important investigation tool in DG are the so called Cayley--Menger (CM) matrices \cite{Menger1928,Menger1931,Blumenthal1953}. A CM matrix is defined in terms of the squared distances between the points of $G$ and it has been proved that the embeddability of $G$ in $\RR^k$ is strictly related to the values of the determinants of certain CM matrices \cite{Schoenberg1935,Sippl1986}.

In localization problems the vertices $p_i$ of $G$ correspond to the sensors and sources, while the weighted edges $e_{ij}$ are the available range measurements. In this situation, the first important question is to determine the conditions for the graph $G$ (in particular the amount of data) that are necessary to have a unique realization in $\RR^k,$ where $k=2,3.$ To this respect, the DG approach proved its usefulness \cite{Aspnes2006,Eren2004}.

However, in real world applications range measurements are affected by noise. The engineering literature concerning the localization based on range measurements in such scenario is very wide (see for example \cite{Navidi1998,Caffery2000,Cheung2004b,Cheung2004a,Cheung2005,So2007,Wei2008,Beck2008}). The source estimation, in particular the Maximum Likelihood Estimation (MLE), is a non linear and non convex problem, therefore it is difficult to globally solve it. This is why researchers have studied many different approaches and algorithms that give rise to robust estimations, but that are suboptimal from a statistical point of view. The intend to offer different perspective on this important problem has been one of the main motivation of our work.

In this manuscript we propose the analysis of the range--based localization from the point of view of differential and algebraic geometry. Although these are not usual mathematical tools of DG and space--time signal processing, they are not unheard of in these fields.
In particular, in the current case of interest, we can mention the analysis of the performance of sensor networks localization proposed in \cite{Cheng2013} that is based on Information Geometry. On the other hand, the study of Cayley--Menger varieties in terms of algebraic geometry has been used to characterize certain properties of the embedding of graphs \cite{Borcea2002,Borcea2004}.

Here, we focus on the range--based source localization with two and three calibrated and synchronous sensors. Firstly, we define the stochastic model for range measurements, which encodes the range--based localization into a map from the Euclidean space containing source and receivers to the space of range measurements. We then offer a complete characterization of such a map, along the same line we adopted in \cite{Compagnoni2013a} for the analysis of the localization based on range differences, which are measured through Time Differences Of Arrival (TDOAs). First of all, we address the identifiability problem \cite{Bellman1970}, which is the statistical equivalent of the unique embeddability problem of DG.

Even more important, we describe in great detail the geometry of the sets of feasible range measurements. As already discussed in \cite{Compagnoni2013a}, we consider such analysis as a crucial step toward a deeper understanding of the localization issue.
Indeed, it is well known from Information Geometry \cite{Amari2000,Cheng2013} that the properties of the range statistical model are strictly related to the shape of its measurements set.
This approach proved successful in many similar situations, for example in characterizing the accuracy of an estimator and in designing optimal estimation algorithms \cite{Kobayashi2013}.
Furthermore, the description of the range deterministic model in algebraic geometry terms paves the way to the analysis of the corresponding statistical model using Algebraic Statistics \cite{Pistone2001,Drton2009}. For example, our study becomes instrumental for understanding and computing the MLE. As far as that is concerned, we can cite the recent definition of Euclidean Distance Degree \cite{Draisma2013,Friedland2014} for algebraic statistical models, which is an indicator of the complexity of the MLE, and the continuous development of techniques for polynomial optimization over semialgebraic varieties \cite{Blekherman2012}.

Finally, the last goal we achieve in the manuscript is the comparison between the range and range difference localization models. The localization methods based on range difference are very widespread and popular in the literature of space--time signal processing. The comparison between the performances of the range and range differences localization algorithms has been proposed in works like \cite{Shin2002,Shen2008}.
Here we show how one can obtain the same characterization of the deterministic range difference model given in \cite{Compagnoni2013a} starting from the results on the range model. This analysis could be very useful in the cases where the receivers are actually measuring ranges, but for some reason such as the lack of synchronization between the sources and the receivers, the available information is completely contained in the associated range differences. A relevant example is the Global Positioning System (GPS), where the direct use of the ranges is impossible due to the bias between the clocks of the receiver and the satellites.

The manuscript is organized as follows. In Section \ref{sec:Geometry of TOA maps} we define the deterministic and statistical models for source localization based on range measurements.
We take $r$ receivers and a source in a Euclidean plane, that we identify with $\RR^2$. We encode the deterministic model in the TOA map $\bs{\TT_r}$ from $\RR^2$ to the space of range measurements $\RR^r.$ We then rewrite the most relevant problems of source localization in terms of $\bs{\TT_r}.$

Section \ref{sec:r2d2} is devoted to the study of $\bs{\TT_2}.$ We prove that this map is locally injective at every point outside the line $r$ containing the two receivers, but it is not injective due to the symmetry of the problem with respect to $r$. Furthermore, we prove that the image of $\bs{\TT_2}$ is an unbounded convex polyhedron. This Section can be considered as a warm up for the rest of the paper, wherein we adopt investigation techniques that are as simple as possible. Unfortunately, such mathematical tools are not as suitable when $r>2.$

Sections from \ref{sec:r3d2local} to \ref{sec:collinear-microphones} focuses on the study of $\bs{\TT_3}$ and they are the core of the manuscript. In Section \ref{sec:r3d2local} we complete the local analysis of the range map through the study of its Jacobian matrix. In Section \ref{sec:MK}, we translate the localization problem in terms of the exterior algebra formalism over the Minkowski space $\RR^{2,1}$. On one hand, this allows us to shorten the computations, secondly it permits the comparison between the range and range differences models that we give in Sections \ref{sec:TDOA-summary} and \ref{sec:TDOAvsTDOA}. Section \ref{sec:2Dr3} offers a complete description of the map $\bs{\TT_3}$ for the case of non-aligned receivers. As main results, in Subsection \ref{sec:ImsubsetKum} we prove that $\text{Im}(\bs{\TT_3})$ is contained in a Kummer's quartic surface and in \ref{sec:geompropIm} we give a detailed description of its geometric properties, both from an algebraic and a differential point of view. In particular, we investigate the link between the properties of the Kummer's and some distinguished sets in the physical Euclidean plane. Similar results are derived for the case of aligned receivers in Section \ref{sec:collinear-microphones}.

From Section \ref{sec:TOA-3D} to Section \ref{sec:TDOAvsTDOA} we give some applications of the previous results. In particular, in Section \ref{sec:TOA-3D} we glance at the source localization problem in the Euclidean space $\RR^3.$
This can be seen as the first step into the extension of our results to the cases of general spatial configurations of source and receivers. Sections \ref{sec:TDOA-summary} and \ref{sec:TDOAvsTDOA} are devoted to the study of the connection between the range and range difference models. In order to improve the readiness of the manuscript, Section \ref{sec:TDOA-summary} provides a brief analysis of the range difference model following \cite{Compagnoni2013a}. In Section \ref{sec:TDOAvsTDOA}, we compare the two models and we show how to derive the results on the range difference localization from the ones about the range case.

Finally, in Section \ref{sec:conclusion} we briefly discuss the impact of this work and draw some conclusions. Moreover, we describe possible future research directions that can take advantage of the analysis presented in this manuscript.

The mathematical techniques involved in the article mainly come from differential and algebraic geometry and from multilinear algebra. We invite the reader to Appendices A and B of \cite{Compagnoni2013a} for a concise introduction to the tools used in the paper, while we suggest \cite{Abraham1988,Beltrametti2009,doCarmo} as possible references.


\section{Physical model and mathematical description}\label{sec:Geometry of TOA maps}

In this manuscript, with the exception of Section \ref{sec:TOA-3D}, we consider range--based localization under the assumption that the source and the receivers are coplanar. This choice is in line with previous works \cite{Compagnoni2012,Bestagini2013,Compagnoni2013a,Compagnoni2013b,Compagnoni2016a} and it allows us to approach the problem with more progression and visualization effectiveness. Therefore, we take synchronized source and receivers, we assume that the receivers are placed at known locations and the signals propagate through a homogeneous medium in anechoic conditions. Under these hypotheses we can identify the physical space with the Euclidean plane, here referred to as the $x$--plane.

After choosing an orthogonal Cartesian coordinate system, the x--plane can be identified with $\mathbb{R}^{2}$. On this plane, $\m{i} = \left(x_{i},y_{i}\right), \ i=1,\dots,r$ are the positions of the receivers and $\mb{x} = \left(x,y\right)$ is the position of the source $S$. The corresponding displacement vectors are
\begin{equation}
\mb{d_i} \left(\mb{x}\right) = \mb{x} - \m{i}, \ \quad \ \mb{d_{ji}} = \m{j} - \m{i}, \ \quad \ i,j = 1, \dots, r, \ \ i \neq j,
\end{equation}
whose norms are $d_{i}\left(\mb{x}\right)$ and $d_{ji}$, respectively. For the sake of notation simplicity, in the following we will denote the norm of the vector $\mb{v}$ with $v$ and the corresponding unit vector is  $\mb{\tilde{v}} = \frac{\mb{v}}{v}$.

Without loss of generality, we assume the propagation speed of the signal in the medium to be equal to 1. This way, the measured range $\widehat{\mathcal{T}}_{i}\left(\mb{x}\right)$ for the sensor $\m{i}$ turns out to be equal to the range $d_{i}\left(\mb{x}\right)$ plus a measurement error $\varepsilon_{i}$:
\begin{align}
\widehat{\mathcal{T}}_{i}\left(\mb{x}\right) = d_{i}\left(\mb{x}\right) + \varepsilon_{i}, \quad i = 1,\dots,r.
\end{align}
Regardless of the measurement procedure, a wavefront originating from a source $S$ at $\x$ will yield a set of range measurements $\left(\widehat{\mathcal{T}}_{1}\left(\mb{x}\right), \dots, \widehat{\mathcal{T}}_{r}\left(\mb{x}\right)\right)$. As the measurement noise is a random variable, we are concerned with a stochastic model.
\begin{defn}\label{def:TOAmodel}
The range model is
\begin{align}
\bs{\widehat{\TT}_r}(\x) = \left(\widehat{\mathcal{T}}_{1}\left(\mb{x}\right), \dots, \widehat{\mathcal{T}}_{r}\left(\mb{x}\right)\right).
\end{align}
The deterministic part of this model is obtained by setting $\varepsilon_{i} = 0$ in $\bs{\widehat{\TT}_r}$, which gives us the range map
\begin{center}
$\begin{matrix} \bs{\TT_r}&: \mathbb{R}^{2}& \rightarrow & \mathbb{R}^{r}
\\ &\mb{x} &\mapsto& \left(d_{1}\left(\mb{x}\right), \dots, d_{r}\left(\mb{x}\right)\right).
\end{matrix}$
\end{center}
The target set is referred to as the $\mathcal{T}$--space and we denote its points with $\bs{\mathcal{T}}=\left(\mathcal{T}_{1}, \dots,\mathcal{T}_{r}\right)$.
\end{defn}

We observe that the range maps have been recently defined also in \cite{Ichiki2013}, where they have been studied from the point of view of differential geometry. As already done in the case of range difference--based localization \cite{Compagnoni2013a}, the scope of the manuscript is the study of the properties of the range map that are crucial for a deeper understanding of the range model. First of all, we are concerned in the identifiability problem, which can be formulated as follows.
\begin{itemize}
\item
Given $\bs{\mathcal{T}}$ in the $\mathcal{T}$--space, does there exist a source in the $x$--plane such that $\bs{\TT_r}(\x)  = \bs{\mathcal{T}}$, i.e. $ \bs{\mathcal{T}} \in \mbox{Im}(\bs{\TT_r})$?
\item
If $\mb{x}$ exists, is it unique, i.e. $ \vert \bs{\TT_r}^{-1}(\bs{\mathcal{T}}) \vert = 1$?
\end{itemize}
Subsequently, we want to obtain the coordinates of $ \mb{x}$ as a function of $\bs\TT$, or equivalently, the explicit inverse map $\bs{\TT_r}^{-1}.$
Finally, as we said in the Introduction, the properties of a statistical model depend on the geometry of the set of feasible measurements \cite{Amari2000}. We are therefore interested in describing in detail the geometry of $\mbox{Im}(\bs{\TT_r}).$
Our work takes into account the cases $r = 2,3$, which are the smallest values of $r$ for which $\bs{\TT_r}$ can be locally injective and injective, respectively, or equivalently the source position is uniquely discoverable (at least locally). Hence, in the following Sections we study the maps $\bs{\TT_2}$ and $\bs{\TT_3}$.


\section{The localization problem with two receivers}\label{sec:r2d2}
We begin our analysis by focusing on the simplest case of range--based source localization. We take a source in $\x$ and two sensors placed in $\m{1},\m{2}$ on the Euclidean plane $\RR^2.$ As shown in Figure \ref{sez2-fig1}, this problem is symmetric with respect to the line $r$ containing the sensors, therefore we immediately notice that it is not possible to uniquely locate the source if $x\notin r,$ unless we have some a--priori information on the half plane containing $\x.$ From a mathematical standpoint, the analysis of the range map $\bs{\TT_2}$ can be completed through the use of elementary instruments. On the other hand, the usefulness of such study goes beyond the scope of this section. Indeed, this allows us to gradually introduce several concepts that we will widely use throughout the entire manuscript. Notice that many of the following ideas are already known in the signal processing literature \cite{Caffery2000}.

By Definition \ref{def:TOAmodel}, two real numbers $\mathcal{T}_{1},\mathcal{T}_{2}$ give a point $\bs{\mathcal{T}}=\left(\mathcal{T}_{1},\mathcal{T}_{2}\right)\in\text{Im}(\bs{\TT_2})$ if, and only if, there exists a solution $\x\in\RR^2$ of the system
\begin{equation}\label{eq:sistTOA2}
\left\{
\begin{array}{l}
d_{1}\left(\mb{x}\right)=\TT_1\\
d_{2}\left(\mb{x}\right)=\TT_2
\end{array}\right..
\end{equation}
The two equations in \eqref{eq:sistTOA2} have a neat geometrical interpretation.
\begin{defn}\label{levelsets}
Let $\mathcal{T} \in \mathbb{R}$. The set
\begin{align}
A_{i}\left(\mathcal{T}\right) = \left\{\mb{x} \in \mathbb{R}^{2} \vert d_{i}(\x) = \mathcal{T} \right\}
\end{align}
is the level set of $d_{i}(\x)$ in the $x$--plane.
\end{defn}
\begin{rem}\label{rem:levelsets}
$A_{i}\left(\mathcal{T}\right)$ is a circle centered at $\m{i}$ and with radius $\mathcal{T}$, when $\mathcal{T} > 0$. If $\ \mathcal{T} < 0$ then $A_{i}\left(\mathcal{T}\right) = \varnothing$. Finally, if $\mathcal{T} = 0$ we have $A_{i}\left(0\right) = \left\{\m{i}\right\}$.
\end{rem}
A straightforward consequence of Remark \ref{rem:levelsets} is that $\TT_1,\TT_2$ are feasible only if
$
\mathcal{T}_{1}\geq 0\ \text{and}\
\mathcal{T}_{2}\geq 0,
$
which implies that $\text{Im}(\bs{\TT_2})$ is a subset of the first quadrant of $\RR^2.$ Under this assumption, we have that the localization of the source is geometrically equivalent to find the intersection of two given real circles. In particular, we have that $\bs{\mathcal{T}}\in\text{Im}(\bs{\TT_2})$ if, and only if, the intersection $A_1(\TT_1)\cap A_2(\TT_2)$ is not empty (see Figure \ref{sez2-fig1}).
\begin{figure}[htb]
\centering
\includegraphics[width=6cm,bb=206 342 405 449]{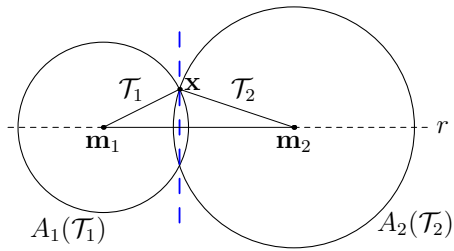}\\
\caption{Two ranges $\mathcal{T}_{1}$ and $\mathcal{T}_{2}$ generate two circles and their intersections are the solutions for the localization problem with two receivers. It is also possible to locate the source by intersecting the radical axis (the dashed blue line) and one of the two circles.}\label{sez2-fig1}
\end{figure}

Let us take the orthonormal basis of $\RR^2$ given by $\left(\mb{\tilde{d}_{21}},\ast\mb{\tilde{d}_{21}}\right),$ where we remind that the two dimensional Hodge operator $\ast$ is simply the counterclockwise rotation of $\frac{\pi}{2}.$ Then
$$
\mb{d_1}\left(\mb{x}\right) = a\, \mb{\tilde{d}_{21}} + b \ast\mb{\tilde{d}_{21}} 	
\qquad\text{and}\qquad
\mb{d_2}(\x) = - \mb{d_{21}} + \mb{d_1}(\x) = (a-d_{21})\, \mb{\tilde{d}_{21}} + b \ast\mb{\tilde{d}_{21}}.
$$
In order to work with algebraic equations, we square both sides of \eqref{eq:sistTOA2} and we obtain
\begin{equation}\label{eq:sistTOA22}
\left\{
\begin{array}{l}
a^{2} + b^{2}=\TT_1^2\\
d_{21}^{2} - 2a\ d_{21} + a^{2} + b^{2}=\TT_2^2
\end{array}\right.
\qquad\Rightarrow\qquad
\left\{
\begin{array}{l}
a^{2} + b^{2}=\TT_1^2\\
- 2a\ d_{21} =\TT_1^2-\TT_2^2+d_{21}^{2}
\end{array}\right..
\end{equation}
It is important to observe that the second equation of the last system is linear in the coordinates $a,b.$ Indeed, it defines the radical axis of the two circles $A_1(\TT_1),A_2(\TT_2)$ (see Figure \ref{sez2-fig1}). Similar reduction of the degree of the range equations holds for a general configuration of $r$ receivers and it is a key fact for the resolution of the problem. From \eqref{eq:sistTOA22} it follows
\begin{align*}
a = \frac{d_{21}^{2}+\mathcal{T}_{1}^{2}-\mathcal{T}_{2}^{2}}{2d_{21}}\,,
\qquad\qquad
b^{2} = \mathcal{T}_{1}^{2} - a^{2}\,.
\end{align*}

Since we are looking for the real solutions of the localization problem, it is necessary that $\mathcal{T}_{1}^{2} - a^{2} \geq 0$:
\begin{alignat*}{2}
& \mathcal{T}_{1}^{2} - \left(\frac{d_{21}^{2}+\mathcal{T}_{1}^{2}-\mathcal{T}_{2}^{2}}{2d_{21}} \right )^{2} = \\ &= \frac{\left(2d_{21}\mathcal{T}_{1}-d_{21}^{2}-\mathcal{T}_{1}^{2}+\mathcal{T}_{2}^{2} \right )\left(2d_{21}\mathcal{T}_{1}+d_{21}^{2}+\mathcal{T}_{1}^{2} - \mathcal{T}_{2}^{2}\right )}{4d_{21}^{2}} = \\ &= \frac{\left[\mathcal{T}_{2}^{2} - \left(\mathcal{T}_{1} - d_{21} \right )^{2} \right] \left[\left(\mathcal{T}_{1} + d_{21} \right )^{2} -\mathcal{T}_{2}^{2}\right ]}{4d_{21}^{2}} = \\ &= \frac{\left(\mathcal{T}_{2} + \mathcal{T}_{1} - d_{21} \right)\left(\mathcal{T}_{2} - \mathcal{T}_{1} + d_{21} \right )\left(\mathcal{T}_{1}+\mathcal{T}_{2}+d_{21} \right )\left(\mathcal{T}_{1} - \mathcal{T}_{2} + d_{21} \right)}{4d_{21}^{2}}\,.
\end{alignat*}
In the positive quadrant of the $\TT$--plane, it is not difficult to check that the last row in the above formula is non negative if, and only if, the following three inequalities are satisfied:
\begin{equation}\label{eq:TI}
\mathcal{T}_{1} - \mathcal{T}_{2} \leq d_{21}\,,\qquad
-\mathcal{T}_{1} + \mathcal{T}_{2} \leq d_{21}\qquad\text{and}\qquad
\mathcal{T}_{1} + \mathcal{T}_{2} \geq d_{21}\,.
\end{equation}

These are nothing else but the triangular inequalities. We can say that two real numbers $\mathcal{T}_{1},\mathcal{T}_{2}$ are the ranges between the source $\mb{x}$ and the sensors $\m{1},\m{2},$ respectively, if and only if $\mathcal{T}_{1}$, $\mathcal{T}_{2}$ and $d_{21}$ are the lengths of the sides of a triangle. When one of \eqref{eq:TI} holds as equality, the localization problem with two receivers has got only one solution. Clearly, this happens when $\m{1},\m{2}$ and $\x$ lie on the same line. According to Theorem \ref{rank} and Proposition \ref{geom}, in this case $\bs{\TT_2}$ is not locally injective because its Jacobian matrix has rank equal to 1 and the two circles intersects tangentially at $x$.

We summarize the above discussion in the following Theorem and we draw $\text{Im}(\bs{\TT_2})$ in Figure \ref{sez2-fig2}.
\begin{thm}\label{prop:caso2mic}
The image of $\bs{\TT_2}$ is the unbounded polyhedron $Q_{2}$ in the $\mathcal{T}$--plane defined by the triangular inequalities ~\eqref{eq:TI}. If $\bs{\mathcal{T}}= \left(\mathcal{T}_{1}, \mathcal{T}_{2}\right) \in Q_{2}$, then
\begin{align}
\left| \bs{\TT_2}^{-1}\left(\bs{\mathcal{T}}\right) \right| = \begin{cases}
2 \quad\text{if}\ \,\bs{\mathcal{T}} \in \mathring{Q}_{2}, \\
1 \quad\text{if}\ \,\bs{\mathcal{T}} \in \partial Q_{2}, \end{cases}
\end{align}
where $\mathring{Q}_{2}$  and $\partial Q_2$ are the interior and the boundary of $Q_2$, respectively.
Moreover, the solutions $\mb{x_\pm}$ of the localization problem satisfy
\begin{align}\label{eq:solT2}
\mb{d_1} (\mb{x_\pm}) = \frac{d_{21}^{2}+\mathcal{T}_{1}^{2}-\mathcal{T}_{2}^{2}}{2d_{21}}\, \mb{\tilde{d}_{21}} \pm \sqrt{\mathcal{T}_{1}^{2} - \left(\frac{d_{21}^{2}+\mathcal{T}_{1}^{2}-\mathcal{T}_{2}^{2}}{2d_{21}} \right )^{2}} \ast \mb{\tilde{d}_{21}}.
\end{align}
\end{thm}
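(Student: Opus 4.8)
\emph{Proof strategy.} The plan is to reduce the feasibility question $\bs{\mathcal{T}}\in\mathrm{Im}(\bs{\TT_2})$ to the solvability of the algebraic system \eqref{eq:sistTOA22} extracted above, and then to read off the image, the fibre cardinalities and the explicit solutions from the closed forms already obtained for $a$ and $b^{2}$. First I would stress that the passage from \eqref{eq:sistTOA2} to \eqref{eq:sistTOA22} is an \emph{equivalence}, not merely an implication: by Remark \ref{rem:levelsets} feasibility forces $\mathcal{T}_1,\mathcal{T}_2\geq 0$, and since $d_1(\x),d_2(\x)\geq 0$ as well, each equation $d_i(\x)^{2}=\mathcal{T}_i^{2}$ is the same as $d_i(\x)=\mathcal{T}_i$. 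Consequently a source $\x$ with $\bs{\TT_2}(\x)=\bs{\mathcal{T}}$ exists if and only if the value $a=\frac{d_{21}^{2}+\mathcal{T}_1^{2}-\mathcal{T}_2^{2}}{2d_{21}}$ imposed by the linear (radical-axis) equation admits a real $b$ with $b^{2}=\mathcal{T}_1^{2}-a^{2}$, i.e. if and only if $\mathcal{T}_1^{2}-a^{2}\geq 0$.

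Next I would use the factorization of $\mathcal{T}_1^{2}-a^{2}$ already computed,
\[
\mathcal{T}_1^{2}-a^{2}=\frac{(\mathcal{T}_2+\mathcal{T}_1-d_{21})(\mathcal{T}_2-\mathcal{T}_1+d_{21})(\mathcal{T}_1+\mathcal{T}_2+d_{21})(\mathcal{T}_1-\mathcal{T}_2+d_{21})}{4d_{21}^{2}},
\]
and check that on the first quadrant $\{\mathcal{T}_1\geq 0,\ \mathcal{T}_2\geq 0\}$ this expression is non-negative exactly when the three inequalities \eqref{eq:TI} hold: the factor $\mathcal{T}_1+\mathcal{T}_2+d_{21}$ is strictly positive there, and a short sign discussion of the remaining three factors gives the claim. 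I would also note that \eqref{eq:TI} \emph{by itself} confines $\bs{\mathcal{T}}$ to the first quadrant (if $\mathcal{T}_1<0$ the first two inequalities in \eqref{eq:TI} cannot hold simultaneously, and symmetrically for $\mathcal{T}_2$), so the locus cut out by \eqref{eq:TI} is precisely $Q_2$ and $\mathrm{Im}(\bs{\TT_2})=Q_2$. That $Q_2$ is an unbounded polyhedron is then immediate: it is an intersection of three affine half-planes containing, for instance, the ray $\{\mathcal{T}_1=\mathcal{T}_2=t:\ t\geq d_{21}/2\}$.

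To count preimages, fix $\bs{\mathcal{T}}\in Q_2$. The coordinate $a$ is uniquely determined, while $b=\pm\sqrt{\mathcal{T}_1^{2}-a^{2}}$; each sign produces one source through $\mb{d_1}(\mb{x_\pm})=a\,\mb{\tilde{d}_{21}}\pm\sqrt{\mathcal{T}_1^{2}-a^{2}}\,\ast\mb{\tilde{d}_{21}}$ and $\mb{x_\pm}=\m{1}+\mb{d_1}(\mb{x_\pm})$, which is exactly \eqref{eq:solT2}. The two sources coincide if and only if $\mathcal{T}_1^{2}-a^{2}=0$, i.e. if and only if one of the inequalities \eqref{eq:TI} is an equality, i.e. (by the previous paragraph) if and only if $\bs{\mathcal{T}}\in\partial Q_2$; otherwise they are distinct. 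Hence $|\bs{\TT_2}^{-1}(\bs{\mathcal{T}})|=2$ on $\mathring{Q}_2$ and $=1$ on $\partial Q_2$. Geometrically this is the familiar trichotomy for two circles of radii $\mathcal{T}_1,\mathcal{T}_2$ whose centres are at distance $d_{21}$: two intersection points, one tangency point, or none, according to whether \eqref{eq:TI} holds strictly, with one equality, or fails, the tangency case being the one where the Jacobian of $\bs{\TT_2}$ drops rank (cf. Theorem \ref{rank} and Proposition \ref{geom}).

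The computations here are elementary, so the one place that needs genuine attention is the equivalence between \eqref{eq:sistTOA2} and \eqref{eq:sistTOA22}: one must be sure that squaring neither creates nor destroys real solutions, which forces the explicit use of the non-negativity of both the $\mathcal{T}_i$ and the distances, together with the remark that \eqref{eq:TI} already lies in the first quadrant. The accompanying bookkeeping, namely that $\partial Q_2$ is exactly the vanishing locus of the degree-four factor, is the only other point to pin down carefully.
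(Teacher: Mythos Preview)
Your proof is correct and follows essentially the same route as the paper, which presents the argument as the running discussion preceding the theorem statement: square the system using non-negativity, solve the radical-axis equation for $a$, factor $\mathcal{T}_1^2-a^2$ into the product giving the triangular inequalities, and read off the $\pm$ count and the formula \eqref{eq:solT2}. One small slip: when you argue that \eqref{eq:TI} already forces the first quadrant, it is not the \emph{first two} inequalities that exclude $\mathcal{T}_1<0$ (e.g.\ $\mathcal{T}_1=-1,\ \mathcal{T}_2=0,\ d_{21}=2$ satisfies both), but rather the second together with the third, since $-\mathcal{T}_1+\mathcal{T}_2\le d_{21}$ and $\mathcal{T}_1+\mathcal{T}_2\ge d_{21}$ combine to give $2\mathcal{T}_1\ge 0$; the symmetric pair handles $\mathcal{T}_2$.
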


\begin{figure}[htb]
\centering
\includegraphics[width=6cm,bb=206 303 405 488]{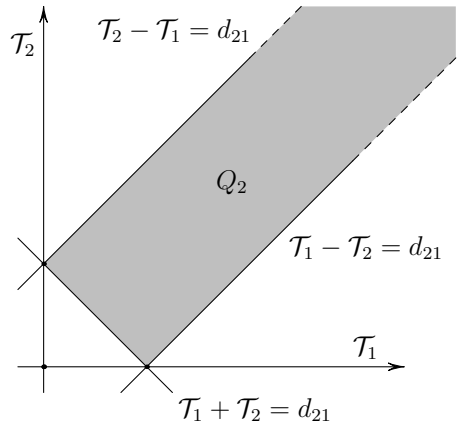}\\
\caption{According to Theorem \ref{prop:caso2mic}, the image of $\bs{\TT_2}$ is the unbounded polyhedron $Q_2$ defined by the triangular inequalities. In the grey region the map $\bs{\TT_2}$ is 2--to--1, while on its boundary it is 1--to--1.}\label{sez2-fig2}
\end{figure}

\begin{rem}\label{solform}
As we said at the beginning of the Section, the localization problem is symmetrical with respect to the line $r$ through $\m{1},\m{2}.$ This implies that there is uniqueness of localization if, and only if, the source $\x$ lies on that line. In such a case, however, the problem is ill conditioned. Indeed, in a noisy scenario, small errors on the measurements $\bs{\widehat{\TT}}$ could drastically modify the relative position of the two circles $A_1(\TT_1)$ and $A_2(\TT_2)$, e.g. their intersection could be either empty or two distinct points (see Figure \ref{sez2-fig1}). In the $\TT$--plane, this situation corresponds to $\bs{\mathcal{T}} \in \partial Q_{2}:$ a small perturbation can push $\bs{\mathcal{T}}$ either outside $Q_2$ or in its interior (see Figure \ref{sez2-fig2}).
\end{rem}


\section{Local analysis of $\bs{\TT_3}$}\label{sec:r3d2local}
In this section we study the local invertibility of the range map $\bs{\TT_3}$.
As extensively explained in \cite{Compagnoni2013a}, this is the first step toward the resolution of the problems stated at the end of Section \ref{sec:Geometry of TOA maps}. The main tool we use is the Inverse Function Theorem: if the Jacobian matrix of $\bs{\TT_3}$ is invertible at $\x$, then $\bs{\TT_3}$ is invertible in a neighborhood of $\x.$ For this reason we look for the \emph{degeneracy locus} of $\bs{\TT_3}$, i.e. the locus where the Jacobian matrix $J(\x)$ of $\bs{\TT_3}$ drops rank.

A first fact is that the component function $d_{i} (\x)$ of $\bs{\TT_3}$ is differentiable in $\RR^2 \setminus \m{i},$ for $i=1,2,3.$ Therefore, $\bs{\TT_3}$ is differentiable in $D=\RR^{2} \setminus \{\m{1},\m{2},\m{3}\},$ which becomes the domain of $J(\x).$ Moreover, the $i$--th row of $J(\x)$ is equal to
\begin{align}
\nabla d_{i} (\x)= \left(\frac{x-x_{i}}{d_{i}(\x)}, \frac{y-y_{i}}{d_{i}(\x)}\right) = \bs{\tilde{d}_i}(\x), \quad \ i = 1,2,3.
\end{align}
\begin{thm}\label{rank}
Let $J(\x)$ be the Jacobian matrix of $\bs{\TT_3}$ at $\mb{x} \in D$. We have the following cases:
\begin{enumerate}[label=(\roman{*}), ref=(\roman{*})]
\item
if $\m{1}$, $\m{2}$ and $\m{3}$ are not collinear, then
$$\rank \ J(\x) = 2, \quad \forall \mb{x} \in D;$$
\item
if $\m{1}$, $\m{2}$ and $\m{3}$ lie on the line $r$, then
\begin{equation*}
\rank \ J(\x) = \begin{cases}
1\quad \text{if}\ \,\mb{x} \in r \cap D, \cr
2\quad \text{otherwise}. \end{cases}
\end{equation*}
\end{enumerate}
\end{thm}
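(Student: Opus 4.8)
The plan is to exploit the very simple structure of $J(\x)$: it is a $3\times 2$ matrix whose $i$-th row is the \emph{unit} vector $\bs{\tilde{d}_i}(\x)=\mb{d_i}(\x)/d_i(\x)$, which on $D$ is well defined and nonzero. Hence $1\le\rank J(\x)\le 2$ automatically, and the whole statement reduces to deciding exactly when $\rank J(\x)=1$, i.e. when the three rows are pairwise proportional. The one elementary fact I would isolate and use repeatedly is the equivalence: for $\x\in D$ and $i\neq j$, the rows $\bs{\tilde{d}_i}(\x)$ and $\bs{\tilde{d}_j}(\x)$ are linearly dependent $\iff$ the displacement vectors $\mb{d_i}(\x)=\x-\m{i}$ and $\mb{d_j}(\x)=\x-\m{j}$ are parallel $\iff$ $\x,\m{i},\m{j}$ lie on a common line. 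The first equivalence is clear, since the two pairs of vectors differ only by the positive scalars $d_i(\x),d_j(\x)$; the second holds because two parallel nonzero vectors based at $\x$ span the same affine line through $\x$.

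For part (i), I would argue by contradiction. Suppose $\rank J(\x)\le 1$ at some $\x\in D$. Then all three rows are pairwise proportional, so $\mb{d_1}(\x),\mb{d_2}(\x),\mb{d_3}(\x)$ are pairwise parallel nonzero vectors; consequently $\m{1},\m{2},\m{3}$ all lie on the single line $\x+\RR\,\mb{d_1}(\x)$, contradicting the non-collinearity assumption. Hence $\rank J(\x)=2$ for every $\x\in D$.

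For part (ii), let $r$ be the line carrying $\m{1},\m{2},\m{3}$. If $\x\in r\cap D$, then $\x,\m{1},\m{2},\m{3}$ all lie on $r$, so every $\mb{d_i}(\x)$ is parallel to a fixed direction vector of $r$ and every row of $J(\x)$ equals $\pm$ the unit direction of $r$; since the rows are nonzero, $\rank J(\x)=1$. If instead $\x\notin r$, then $\x$ does not lie on the line through $\m{1}$ and $\m{2}$ (which is precisely $r$), so by the equivalence above the first two rows $\bs{\tilde{d}_1}(\x),\bs{\tilde{d}_2}(\x)$ are linearly independent in $\RR^2$, whence $\rank J(\x)=2$. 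Together with the previous case this covers all of $D$ and proves the claim.

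I do not expect a genuine obstacle here: once the dependence of two rows is recast as collinearity of $\x$ with two of the receivers, both parts are immediate, and no computation is required. The only points deserving a little care are (a) that the scalar relating two of the unit rows may be negative, so ``dependent rows'' is genuinely equivalent to plain collinearity of $\x,\m{i},\m{j}$ and nothing stronger; and (b) the standing restriction to $D$, which guarantees $d_i(\x)>0$ so that the rows are honest unit vectors and the reductions above are legitimate.
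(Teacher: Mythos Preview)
Your argument is correct and follows essentially the same route as the paper: both reduce the rank computation to the observation that the rows $\bs{\tilde d_i}(\x)$ are pairwise proportional if and only if $\x,\m{1},\m{2},\m{3}$ are collinear. Your write-up is simply more explicit about the sign issue and the role of the restriction to $D$, which the paper leaves implicit.
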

\begin{proof}
Assume $\mb{x} \neq \m{i}$, for $i = 1,2,3$. The rank of $J(\x)$ drops when its three rows are proportional each other. This happens if, and only if, the vectors $\bs{\tilde{d}_1}(\x),\bs{\tilde{d}_2}(\x),\bs{\tilde{d}_3}(\x)$ have the same direction. This is possible if, and only if, the three receivers $\m{1},\m{2},\m{3}$ and the point $\x$ are aligned.
\end{proof}

Theorem ~\ref{rank} has a nice geometric interpretation in terms of the intersection of the level sets $A_i(\TT_i),\ i=1,2,3$ given in Definition \ref{levelsets}. First of all we observe that, similarly to the case of $\bs{\TT_2}$ in Section \ref{sec:r2d2}, the source lies on the intersection $A_1(\TT_1)\cap A_2(\TT_2)\cap A_3(\TT_3)$. Moreover, we should not forget that two generic curves $\mathcal{C}_{1}$ and $\mathcal{C}_{2}$ meet transversally at a smooth point $P$ if their tangent lines at $P$ are different.
\begin{prop}\label{geom}
Let $\mb{x} \in A_{1}\left(\mathcal{T}_{1}\right) \cap A_{2}\left(\mathcal{T}_{2}\right) \cap A_{3}\left(\mathcal{T}_{3}\right)$. Then $A_{1}\left(\mathcal{T}_{1}\right)$, $A_{2}\left(\mathcal{T}_{2}\right)$ and $A_{3}\left(\mathcal{T}_{3}\right)$ do not meet transversally at $\mb{x}$ if, and only if, $\m{1}$, $\m{2}$, $\m{3}$ and $\mb{x}$ lie on the same line.
\end{prop}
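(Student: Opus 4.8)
The plan is to reduce the statement about transversality of the three circles $A_i(\mathcal{T}_i)$ to the rank statement for the Jacobian $J(\x)$ established in Theorem~\ref{rank}. The key observation is that each $A_i(\mathcal{T}_i)$ is, away from its center $\m{i}$, a smooth level set of the function $d_i$, and its tangent line at $\x$ is precisely the line through $\x$ orthogonal to the gradient $\nabla d_i(\x) = \bs{\tilde{d}_i}(\x)$. Since $\x \in A_1(\mathcal{T}_1)\cap A_2(\mathcal{T}_2)\cap A_3(\mathcal{T}_3)$ forces $\mathcal{T}_i = d_i(\x) > 0$ (note $\x \neq \m{i}$, otherwise $\mathcal{T}_i = 0$ and the other two circles would have to pass through $\m{i}$, a degenerate subcase I would mention separately), all three curves are genuine smooth circles near $\x$, so $\x \in D$ and the hypotheses of Theorem~\ref{rank} apply.

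First I would make precise what ``meeting transversally at $\x$'' means for three plane curves through a common point: the three tangent lines at $\x$ are not all equal, equivalently the three unit normals $\bs{\tilde{d}_1}(\x), \bs{\tilde{d}_2}(\x), \bs{\tilde{d}_3}(\x)$ do not all span the same one-dimensional subspace of $\RR^2$. Then the three circles fail to meet transversally at $\x$ exactly when these three gradient vectors are pairwise parallel, which is exactly the condition that makes the three rows of $J(\x)$ proportional, i.e.\ $\rank J(\x) = 1$. By Theorem~\ref{rank}, $\rank J(\x) = 1$ at a point $\x \in D$ happens if and only if $\m{1}, \m{2}, \m{3}$ lie on a line $r$ and $\x \in r$; equivalently, $\m{1}, \m{2}, \m{3}$ and $\x$ are collinear. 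This is precisely the claimed equivalence.

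To fill in the geometric content directly (rather than only invoking the Jacobian theorem), I would argue that $\bs{\tilde{d}_i}(\x)$ and $\bs{\tilde{d}_j}(\x)$ are parallel if and only if $\x$ lies on the line through $\m{i}$ and $\m{j}$: indeed $\bs{\tilde{d}_i}(\x) \parallel \bs{\tilde{d}_j}(\x)$ means $\x - \m{i}$ and $\x - \m{j}$ are parallel, so $\x, \m{i}, \m{j}$ are collinear. Requiring all three pairs to be parallel then forces $\x, \m{1}, \m{2}, \m{3}$ to lie on one common line. Conversely, if they are all collinear on a line $r$, then all $\bs{\tilde{d}_i}(\x)$ point along $r$, hence are mutually parallel, and all three tangent lines coincide (the common line through $\x$ perpendicular to $r$), so transversality fails.

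The only real subtlety — the ``hard part'', though it is minor — is handling the geometry carefully at a degenerate point, namely checking that the relevant circles are smooth at $\x$ so that the notion of tangent line is well defined: this uses $\mathcal{T}_i = d_i(\x) > 0$, and the case $\x = \m{i}$ needs the side remark that then $A_i(0) = \{\m{i}\}$ is a point, not a curve, and the statement should be read with $\x \in D$ in mind (consistent with the ambient hypothesis and with Remark~\ref{rem:levelsets}). Everything else is a direct translation between the linear-algebraic condition ``rows of $J(\x)$ proportional'' and the geometric condition ``tangent lines coincide,'' so the proof is essentially a one-paragraph corollary of Theorem~\ref{rank} together with the identification of $\nabla d_i(\x)$ with the unit normal $\bs{\tilde{d}_i}(\x)$ to $A_i(\mathcal{T}_i)$ at $\x$.
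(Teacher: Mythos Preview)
Your proposal is correct and follows essentially the same approach as the paper: identify $\bs{\tilde d_i}(\x)$ as the normal to $A_i(\TT_i)$ at $\x$, so non-transversality is equivalent to the three normals being pairwise proportional, which in turn is equivalent to collinearity of $\x,\m{1},\m{2},\m{3}$. The paper's proof is just your direct geometric paragraph, stated more tersely and without the detour through Theorem~\ref{rank} or the discussion of the degenerate case $\x=\m{i}$.
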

\begin{proof}
Let $\mb{x} \in A_{i}\left(\mathcal{T}_{i}\right)$. Then, the vector $\bs{\tilde{d}_i}(\x)$ is orthogonal to the tangent line to $A_{i}\left(\TT_{i}\right)$ at $\mb{x}$. Hence, the level set $A_{i}\left(\TT_{i}\right)$, $i = 1,2,3$, do not meet transversally at $\mb{x}$, if and only if, the vectors $\bs{\tilde{d}_i}(\x)$ are proportional each other, that is to say $\m{1},\m{2},\m{3}$ and $\mb{x}$ lie on the same line.
\end{proof}
In Figure \ref{fig:circlesintersections} we depict two cases of transversal and non transversal intersection of the level sets. The full discussion about the existence and uniqueness of the source localization for $\bs{\TT_3}$ is contained in Sections \ref{sec:2Dr3} and \ref{sec:collinear-microphones}. However, from the two pictures it should be clear that we can uniquely locate a source using three non collinear receivers, while in the aligned case the localization behaves very similarly to the case of sources aligned with the two receivers $\mathbf{m}_1$ and $\mathbf{m}_2$ described in Section \ref{sec:r2d2}. In particular, the neighborhood of the degeneracy locus $r$ is a critical region for the localization problem, where small errors on the range measurements could give large errors on the source position. More generally, the study of $J(\x)$ plays an important role in the characterization of the accuracy of any statistical estimator of the source position, because of its strict relation with the Fisher Information Matrix \cite{Amari2000}. At this respect, we refer to the discussion contained in Section $9$ of \cite{Compagnoni2013a} regarding the range difference--based localization.
\begin{figure}[htb]
 \centering
 \subfigure[A case of unique source localization with transversal intersection.]
   {\includegraphics[width=6cm,bb=206 296 405 495]{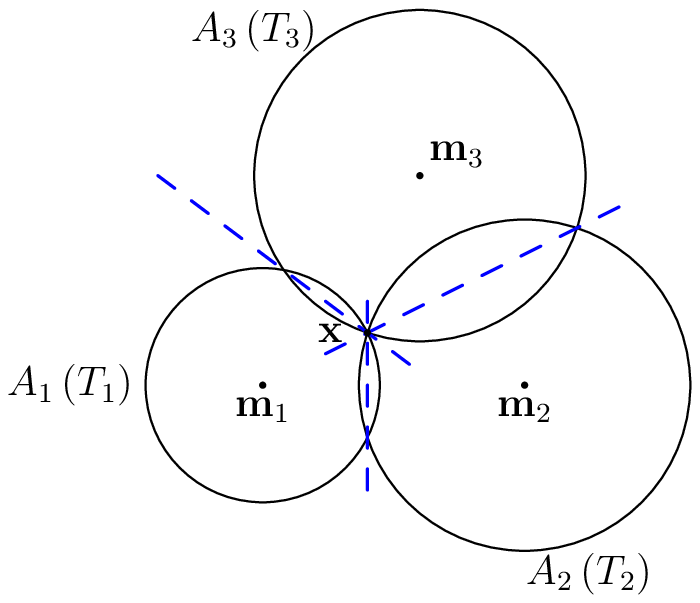}}
 \hspace{10mm}
 \subfigure[A case of unique source localization with non transversal intersection.]
   {\includegraphics[width=6cm,bb=206 319 405 472]{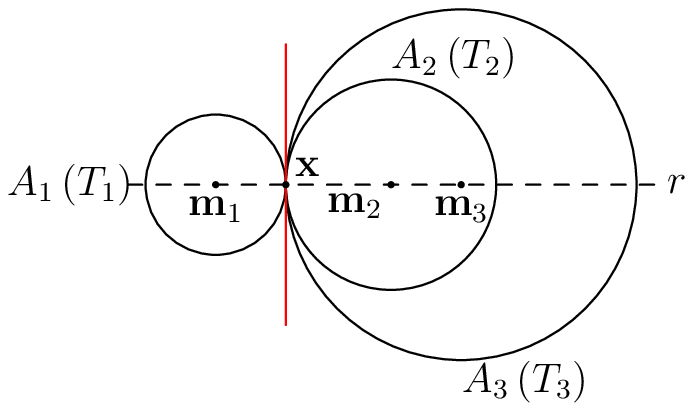}}
 \caption{The two cases described by both Theorem ~\ref{rank} and Proposition ~\ref{geom}. If the level sets do not meet transversally, then $\m{1},\m{2},\m{3}$ and $\mb{x}$ lie on the same line $r,$ which is the degeneracy locus of the range map $\bs{\TT_3}$. From Theorem \ref{rank}, in this case the map is not locally invertible.}\label{fig:circlesintersections}
 \end{figure}

Finally, Theorem \ref{rank} allows us to give the first result on the range measurements set $\text{Im}(\bs{\TT_3}).$
\begin{thm}
The image of $\bs{\TT_3}$ is locally a surface in the $\TT$--space.
\end{thm}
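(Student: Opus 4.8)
The plan is to deduce the statement directly from the rank computation of Theorem~\ref{rank}, using nothing more than the Inverse Function Theorem (equivalently, one could quote the Constant Rank Theorem). The underlying point is that a differentiable map $\RR^{2}\to\RR^{3}$ whose Jacobian has the maximal rank $2$ is an immersion, and an immersion parametrizes, locally, a $2$--dimensional piece of its target.

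Concretely, I would proceed in three short steps. First, isolate the locus of maximal rank: $\bs{\TT_3}$ is differentiable on $D=\RR^{2}\setminus\{\m{1},\m{2},\m{3}\}$, and by Theorem~\ref{rank} its Jacobian $J(\x)$ has rank $2$ on the open set $U\subseteq D$ equal to $D$ itself when $\m{1},\m{2},\m{3}$ are not collinear and to $D\setminus r$ when they are collinear. Second, fix $\x\in U$: since $J(\x)$ is a $3\times 2$ matrix of rank $2$, two of the rows $\nabla d_{i}(\x),\nabla d_{j}(\x)$ are linearly independent, so by the Inverse Function Theorem the pair $(d_{i},d_{j})$ restricts to a diffeomorphism from some neighbourhood $W\ni\x$ onto an open set $V\subseteq\RR^{2}$; writing $k$ for the remaining index and composing $d_{k}$ with the inverse of this diffeomorphism expresses $\bs{\TT_3}(W)$ as the graph $\{\TT_{k}=\varphi(\TT_{i},\TT_{j})\}$ of a smooth function $\varphi\colon V\to\RR$, which is a $2$--dimensional embedded submanifold of the $\TT$--space, i.e. a surface. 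Third, after shrinking $W$ if necessary $\bs{\TT_3}|_{W}$ is injective (automatic for immersions), so this graph is a genuine patch of $\text{Im}(\bs{\TT_3})$ around $\bs{\TT_3}(\x)$; this yields the conclusion near the image of every point of $U$.

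The one point that deserves a word of care — and the only place where the word ``locally'' does real work — is that the argument controls $\text{Im}(\bs{\TT_3})$ only near $\bs{\TT_3}(\x)$ for $\x\in U$: it says nothing at the images of the receivers $\m{1},\m{2},\m{3}$, nor (in the collinear case) along the at most one--dimensional image of the degeneracy locus $r$, and it does not yet exclude that two surface patches coming from distinct preimages cross each other globally. Those exceptional loci are precisely where the interesting singular geometry will sit, and their analysis — together with the identification of the Zariski closure of $\text{Im}(\bs{\TT_3})$ with (a piece of) a Kummer quartic — is the business of Sections~\ref{sec:2Dr3} and~\ref{sec:collinear-microphones}. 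I do not expect any serious obstacle in the present statement itself: it is essentially a repackaging of Theorem~\ref{rank} through the Inverse Function Theorem.
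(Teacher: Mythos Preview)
Your proposal is correct and follows essentially the same route as the paper: pick a point where $J(\x)$ has rank $2$, choose two components $d_i,d_j$ whose gradients are independent, and use the Inverse/Implicit Function Theorem to write the image locally as the graph $\TT_k=\varphi(\TT_i,\TT_j)$. Your additional remarks about the exceptional loci (receivers, and $r$ in the collinear case) and about the purely local nature of the conclusion are accurate and anticipate precisely what the paper defers to Sections~\ref{sec:2Dr3} and~\ref{sec:collinear-microphones}.
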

\begin{proof}
Let us assume that $\x$ is a point where the Jacobian matrix $J(\x)$
has rank $2$. Without loss of generality, we assume that
\begin{align*}
\ast \left(\nabla \mathcal{T}_{1}(\mb{\bar{x}}) \wedge \nabla \mathcal{T}_{2}(\mb{\bar{x}})\right) \neq 0.
\end{align*}
The map $\bs{\TT_3}$ can be written as
\begin{equation}\label{eq:TOAsystem}
\left\{\begin{matrix} d_{1}\left(\mb{x}\right) = \mathcal{T}_{1}
\\ d_{2}\left(\mb{x}\right) = \mathcal{T}_{2}
\\ d_{3}\left(\mb{x}\right) = \mathcal{T}_{3}
\end{matrix}\right.
\end{equation}
and $\bs{\bar{\TT}} = \left(\bar{\TT}_1,\bar{\TT}_2,\bar{\TT}_3\right) = \bs{\TT_3}(\mb{\bar{x}})$ is a solution of ~\eqref{eq:TOAsystem}.
By the Implicit Function Theorem, there exist three functions $x = x\left(\TT_1,\TT_2\right)$, $y = y\left(\TT_1,\TT_2\right)$ and $\TT_3 = \TT_3\left(\TT_1,\TT_2\right),$ defined in a neighborhood of $\left(\bar{\TT_1},\bar{\TT}_2\right)$ and taking values in a neighborhood of $\mb{\bar{x}}$ and $\TT_3,$ such that \eqref{eq:TOAsystem} is locally equivalent to
\begin{equation*}\label{eq:TOAsystem2}
\left\{\begin{matrix} x = x\left(\mathcal{T}_{1},\mathcal{T}_{2}\right)
\\ y = y\left(\mathcal{T}_{1},\mathcal{T}_{2}\right)
\\ \mathcal{T}_{3} = \mathcal{T}_{3}\left(\mathcal{T}_{1},\mathcal{T}_{2}\right)
\end{matrix}\right. \ .
\end{equation*}
Hence, the image of $\bs{\TT_3}$ is locally the graph of $\mathcal{T}_{3}\left(\mathcal{T}_{1},\mathcal{T}_{2}\right)$ and the claim follows.
\end{proof}
The surface $\text{Im}(\bs{\TT_3})$ will be carefully studied in Section \ref{sec:2Dr3} for the case of non collinear receivers and in Section \ref{sec:collinear-microphones} for the aligned sensors scenario.


\section{The range model in the Minkowski formalism}\label{sec:MK}
Minkoswki space and exterior algebra proved to be key tools in the study of range difference--based source localization \cite{Compagnoni2013a,Coll2009,Coll2012}. As explained in Section $4$ of \cite{Compagnoni2013a}, the introduction of a third variable (the time) in addition to the spatial ones has allowed the authors to  reformulate the 2D localization with range differences in terms of intersection of semialgebraic surfaces in the 3D spacetime. This way, through nice simplifications, one can partially linearize the involved equations and finally set the problem as the intersection of one null cone and several planes in $\RR^{2,1}$.

In the case of range measurements, we have already seen similar arguments in Section \ref{sec:r2d2}, where we showed that the range localization with two sensors is mathematically equivalent to the intersection of a line and a circle. In this section we derive similar results for the map $\bs{\TT_3},$ by adopting the same mathematical formalism used in \cite{Compagnoni2013a}. This choice has two main motivations. On the one side, exterior algebra is a powerful tool that allows us to work with compact expressions and that can be used in more general scenarios with a greater number of receivers and sources. On the other hand, as one of the main goals of the manuscript is to compare the range and range differences models and to obtain the properties of the latter from the ones of the former (see next Section \ref{sec:TDOAvsTDOA}), the comparison is much easier if the two models are studied by adopting the same formalism.

We start from system \eqref{eq:sys1}, whose solutions are the points in $A_1(\TT_1) \cap A_2(\TT_2) \cap A_3(\TT_3)$:
\begin{equation}\label{eq:sys1}
\left\{\begin{array}{l}
d_1(\x) = \TT_1 \\
d_2(\x) = \TT_2 \\
d_3(\x) = \TT_3
\end{array}\right. \ .
\end{equation}
As observed in Section \ref{sec:r2d2}, the three ranges are feasible only if $\TT_1,\TT_2,\TT_3\geq 0,$ hence from now on we work under this assumption.
We introduce an auxiliary variable $\mathcal{T}$, which represents the time of emission of the signal. Then, system ~\eqref{eq:sys1} is equivalent to
\begin{equation*}\label{eq:sys2}
\left\{\begin{array}{l}
d_1(\x) = \TT_1 - \TT \\
d_2(\x) = \TT_2 - \TT \\
d_3(\x) = \TT_3 - \TT \\
\TT = 0
\end{array}\right. \ ,
\end{equation*}
as we assume that the source emits the signal at $\TT = 0$. This is not an algebraic system, because of the presence of Euclidean distances. However, since the hypothesis $\TT_i\geq 0,\ i = 1,2,3.$ we can square both sides of the first three equations and get the equivalent polynomial system
\begin{equation}\label{eq:sys3}
\left\{\begin{array}{l}
d_1(\x)^2 - (\TT_1-\TT)^2 = 0 \\
d_2(\x)^2 - (\TT_2-\TT)^2 = 0 \\
d_3(\x)^2 - (\TT_3-\TT)^2 = 0 \\
\mathcal{T} = 0
\end{array}\right. \ .
\end{equation}
In geometric terms, we are considering the $3$--dimensional space described by the triple $\left(x,y,\mathcal{T}\right)$, and in particular we are studying the intersection of the three cones having $\mb{M_i} = \left(\m{i}, \mathcal{T}_{i}\right)$ as vertex, for $i = 1,2,3$ (first three equations of the system \ref{eq:sys3}) and the plane $\mathcal{T}=0$. The (spacetime) source position is among the solutions of this system, it has coordinates $\left(\bar{x},\bar{y},0\right)$ and it lies on the half--cone contained in the half--space $\mathcal{T} \leq \mathcal{T}_{i}$ for each $i$.

The equations in the last system involve expressions very similar to the Euclidean $3$--dimensional norms, up to the minus sign at each term related to time. This fact motivates us to adopt the $3$--dimensional Minkowski space formalism. In the following we use the conventions and the results exposed in \cite{Compagnoni2013a}, Appendix A.
We choose $\mb{e_1}$ and $\mb{e_2}$ as the unit vectors of the $x$- and $y$--axes, respectively, while we choose $\mb{e_3}$ as the unit vector of the $\mathcal{T}$--axis. Then, the inner product $\left \langle \mb{u}, \mb{v}\right \rangle$ of the vectors $\mb{u} = u_{1} \mb{e_1} + u_{2} \mb{e_2} + u_{3} \mb{e_3}$ and $\mb{v} = v_{1} \mb{e_1} + v_{2} \mb{e_2} + v_{3} \mb{e_3}$ is computed as
\begin{align*}
\left \langle \mb{u}, \mb{v} \right \rangle = u_{1}v_{1} + u_{2}v_{2} - u_{3}v_{3}
\end{align*}
and $\left\|\mb{u}\right\|^{2} = \left \langle \mb{u}, \mb{u} \right \rangle = u_{1}^{2} + u_{2}^{2} - u_{3}^{2}$ is the associated norm.  As the inner product for vectors having null component along $ \mb{e_3} $ is the standard Euclidean scalar product, we denote it with the symbol $ \cdot $ according to the standard notation used in literature.

We set the following notation in the $3$--dimensional Minkowski space $\mathbb{R}^{2,1}$. Given $\bs{\mathcal{T}}=(\mathcal{T}_1,\mathcal{T}_2,\mathcal{T}_3)$ in the $\mathcal{T}$--space, the receivers have coordinates
$
\mb{M_i}\left(\bs{\mathcal{T}}\right) = \left(x_{i},y_{i},\mathcal{T}_{i}\right),\ i = 1,2,3,
$
while $\mb{X} = \left(x,y,\mathcal{T}\right)$ is the generic point of $\mathbb{R}^{2,1}.$ The displacement vectors are then
$$
\mb{D_i}(\X,\bs{\TT}) = \X - \mb{M_i}(\bs{\TT})
\qquad\text{and}\qquad
\mb{D_{ji}}(\X,\bs{\TT}) = \mb{M_j}(\bs{\TT}) - \mb{M_i}(\bs{\TT}),
\qquad\text{for}\ i,j = 1,2,3\ \text{and}\ i \neq j.
$$
In this setting, we rewrite system ~\eqref{eq:sys3} as
\begin{equation}\label{eq:sys4}
\left\{\begin{array}{l}
\Vert\mb{D_1}(\X,\bs{\TT})\Vert^2 = 0 \\
\Vert\mb{D_2}(\X,\bs{\TT})\Vert^2 = 0 \\
\Vert\mb{D_3}(\X,\bs{\TT})\Vert^2 = 0 \\
\langle\mb{D_i}(\X,\bs{\TT}), \mb{e_3}\rangle = \TT_i
\end{array}\right. \ ,
\end{equation}
where in the last equation we can indifferently choose $i = 1,2,3$.

Since $\mb{D_j}(\X,\bs{\TT}) = \mb{D_i}(\X,\bs{\TT}) -\mb{D_{ji}}(\bs{\TT}) $
with $i,j = 1,2,3$ and $i \neq j$, it holds
\begin{align*}
\Vert\mb{D_j}(\X,\bs{\TT}) \Vert^2 = \Vert\mb{D_i}(\X,\bs{\TT}) \Vert^2 -
2\langle\mb{D_i}(\X,\bs{\TT}), \mb{D_{ji}}(\X,\bs{\TT})\rangle +
\Vert\mb{D_{ji}}(\X,\bs{\TT})\Vert^2.
\end{align*}
From \eqref{eq:sys4} we know that $\Vert\mb{D_i}(\X,\bs{\TT}) \Vert^2 = \Vert\mb{D_j}(\X,\bs{\TT})\Vert^2 = 0.$ Therefore, we obtain
\begin{equation}\label{eq:planefromcone}
\langle\mb{D_i}(\X,\bs{\TT}),\mb{D_{ji}}(\X,\bs{\TT})\rangle =
\frac{1}{2}\Vert\mb{D_{ji}}(\X,\bs{\TT})\Vert^2,
\end{equation}
or, equivalently,
\begin{equation}\label{eq:planefromcone2}
i_{\mb{D_i}(\X,\bs{\TT})} \mb{D_{ji}}(\X,\bs{\TT})^\flat =
\frac{1}{2} \Vert\mb{D_{ji}}(\X,\bs{\TT})\Vert^2.
\end{equation}
This simplification is the analogous in $\RR^{2,1}$ to the one made in passing between the two systems \eqref{eq:sistTOA22} of Section \ref{sec:r2d2}.
Thus, we can rewrite system \eqref{eq:sys4} and obtain the \emph{$i$--th Formulation} of the localization problem:
\begin{equation}\label{eq:Formulation_i}
\left\{
\begin{array}{l}
i_{\mb{D_i}(\X,\bs{\TT})}\mb{D_{ji}}(\X,\bs{\TT})^\flat =
\frac{1}{2}\Vert\mb{D_{ji}}(\X,\bs{\TT})\Vert^2 \\
i_{\mb{D_i}(\X,\bs{\TT})}\mb{D_{ki}}(\X,\bs{\TT})^\flat =
\frac{1}{2}\Vert\mb{D_{ki}}(\X,\bs{\TT})\Vert^2 \\
\Vert\mb{D_i}(\X,\bs{\TT})\Vert^2 = 0 \\
i_{\mb{D_i}(\X,\bs{\TT})}\mb{e_3}^\flat = \TT_i
\end{array}\right. \ ,
\end{equation}
where $i,j,k = 1,2,3$ are distinct each other. As we anticipated at the beginning of the section, we get an algebraic system, almost linear in $\X.$

Every equation in \eqref{eq:Formulation_i} defines a surface in $\RR^{2,1}.$
\begin{defn}
For each $i=1,2,3,$ we have:
\begin{enumerate}[label=(\roman{*}), ref=(\roman{*})]
\item
$C_i(\bs{\TT}) = \left\{\X\in\RR^{2,1} \mid
\Vert \mb{D_i}(\X,\bs{\TT})\Vert^2 = 0\right\}$ is the null cone with vertex $\mb{M_i}\left(\bs{\mathcal{T}}\right);$
\item
$\Pi^i_j(\bs{\TT}) = \left\{\X \in \RR^{2,1} \mid
i_{\mb{D_i}(\X,\bs{\TT)}} \mb{D_{ji}} (\bs{\TT})^\flat =
\frac{1}{2} \Vert \mb{D_{ji}}(\bs{\TT})\Vert^2\right\},\ i\neq j$ is the plane orthogonal to the displacement vector $\mb{D_{ji}}\left(\bs{\mathcal{T}}\right)$ through the middle point of the segment $\overline{\mb{M_i}\left(\bs{\mathcal{T}}\right)\mb{M_j}\left(\bs{\mathcal{T}}\right)}$;
\item
$\Pi = \left\{\mb{X} \in \RR^{2,1} \mid
i_{\mb{D_i}(\X,\bs{\TT})} \mb{e_3}^\flat = \TT_{i} \right\} =
\left\{\X \in \RR^{2,1} \mid \TT = 0 \right\}$ is the space plane.
\end{enumerate}
\end{defn}
\noindent Let $ i, j, k $ be three distinct integers between $ 1 $ and $ 3.$ The solution of ~\eqref{eq:Formulation_i} is the intersection of  $\Lambda(\bs{\TT}) = \Pi(\bs{\TT}) \cap \Pi^i_j(\bs{\TT}) \cap \Pi^i_k(\bs{\TT})$ with the cone $C_i(\bs{\TT})$. We spend the remaining part of this Section to study $\Lambda(\bs{\TT})$, while the analysis of $\Lambda(\bs{\TT})\cap C_i(\bs{\TT})$ stays at the core of Section \ref{sec:2Dr3}.

In the next Lemma we start by considering the intersection $\Pi(\bs{\TT}) \cap \Pi^i_j(\bs{\TT}).$
\begin{lemma}\label{4am}
For every $\bs{\TT} \in \RR^3$ and $i,j = 1,2,3$ with $i \neq j$, the intersection $L_{ji}(\bs{\TT}) = \Pi(\bs{\TT}) \cap \Pi^i_j(\bs{\TT})$ is a line. A parametric representation of $L_{ji}(\bs{\TT})$ is $\X(\lambda;\bs{\TT}) = \mb{L_0}(\bs{\TT}) + \lambda \mb{v}(\bs{\TT})$, where
\begin{align}
\mb{v}(\bs{\TT})= \ast (\mb{d_{ji}} \wedge \mb{e_3})
\end{align}
and the displacement vector of $\mb{L_0}(\bs{\TT})$ referred to $\m{i}$ is
\begin{align}\label{eq:displvect}
\mb{D_i}(\mb{L_0}(\bs{\TT})) =
\frac{d_{ji}^2-\TT_j^2+\TT_i^2}{2d_{ji}^2}\, \mb{d_{ji}} - \TT_i\,\mb{e_3}.
\end{align}
\end{lemma}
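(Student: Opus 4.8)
The plan is to reduce the statement to solving a small linear system by choosing a convenient basis of $\RR^{2,1}$ adapted to the two planes. Since $\m{i}\neq\m{j}$ we have $\mb{d_{ji}}\neq 0$; moreover $\mb{v}(\bs{\TT})=\ast(\mb{d_{ji}}\wedge\mb{e_3})$ is nonzero, because $\mb{d_{ji}}$ (a spacelike vector) is not proportional to $\mb{e_3}$, so the bivector $\mb{d_{ji}}\wedge\mb{e_3}$ is nonzero and $\ast$ is an isomorphism. Being the Hodge dual of a decomposable bivector, $\mb{v}(\bs{\TT})$ is Minkowski--orthogonal to both $\mb{d_{ji}}$ and $\mb{e_3}$; orthogonality to $\mb{e_3}$ forces its $\mb{e_3}$--component to vanish, so $\mb{v}(\bs{\TT})$ lies in the space plane $\Pi$ and is there Euclidean--orthogonal to $\mb{d_{ji}}$. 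Hence $\{\mb{d_{ji}},\mb{v}(\bs{\TT}),\mb{e_3}\}$ is a basis of $\RR^{2,1}$, and I will describe a generic point $\X$ through the expansion
\begin{equation*}
\mb{D_i}(\X,\bs{\TT}) = \alpha\,\mb{d_{ji}} + \beta\,\mb{v}(\bs{\TT}) + \gamma\,\mb{e_3},\qquad \alpha,\beta,\gamma\in\RR .
\end{equation*}

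Next I would impose the two membership conditions. The $\mathcal{T}$--coordinate of $\mb{M_i}(\bs{\TT})$ is $\TT_i$ and $\mb{v}(\bs{\TT})$ has no $\mb{e_3}$--component, so the $\mathcal{T}$--coordinate of $\X$ equals $\TT_i+\gamma$; therefore $\X\in\Pi(\bs{\TT})$ if and only if $\gamma=-\TT_i$. For the second plane, writing $\mb{D_{ji}}(\bs{\TT}) = \mb{d_{ji}}+(\TT_j-\TT_i)\mb{e_3}$ and using $\langle\mb{v}(\bs{\TT}),\mb{d_{ji}}\rangle = \langle\mb{v}(\bs{\TT}),\mb{e_3}\rangle = \langle\mb{d_{ji}},\mb{e_3}\rangle = 0$, $\langle\mb{d_{ji}},\mb{d_{ji}}\rangle = d_{ji}^2$ and $\langle\mb{e_3},\mb{e_3}\rangle = -1$, the equation of $\Pi^i_j(\bs{\TT})$ collapses to
\begin{equation*}
\alpha\, d_{ji}^2 - \gamma(\TT_j-\TT_i) \;=\; \tfrac12\Vert\mb{D_{ji}}(\bs{\TT})\Vert^2 \;=\; \tfrac12\big(d_{ji}^2-(\TT_j-\TT_i)^2\big).
\end{equation*}
Substituting $\gamma=-\TT_i$ and simplifying via $(\TT_j-\TT_i)^2+2\TT_i(\TT_j-\TT_i)=\TT_j^2-\TT_i^2$ yields $\alpha = \dfrac{d_{ji}^2-\TT_j^2+\TT_i^2}{2d_{ji}^2}$, while $\beta$ stays unconstrained.

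To conclude, since $d_{ji}^2>0$ the pair $(\alpha,\gamma)$ is uniquely determined for each value of the free parameter $\beta$, so $L_{ji}(\bs{\TT})=\Pi(\bs{\TT})\cap\Pi^i_j(\bs{\TT})$ consists exactly of the points $\X$ with $\mb{D_i}(\X,\bs{\TT}) = \alpha\,\mb{d_{ji}} + \beta\,\mb{v}(\bs{\TT}) - \TT_i\,\mb{e_3}$, $\beta\in\RR$; as $\alpha$ is a constant, this is an affine line with direction $\mb{v}(\bs{\TT})$. Taking $\beta=0$ gives the base point $\mb{L_0}(\bs{\TT})$, whose displacement from $\m{i}$ is precisely \eqref{eq:displvect}, and the coefficient of $\beta$ is $\mb{v}(\bs{\TT})$, so $\X(\lambda;\bs{\TT})=\mb{L_0}(\bs{\TT})+\lambda\,\mb{v}(\bs{\TT})$ parametrizes $L_{ji}(\bs{\TT})$. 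I do not expect a genuine obstacle here: after the basis is set up the statement is a $2\times 2$ linear system, and the only points requiring care are the Minkowski sign $\langle\mb{e_3},\mb{e_3}\rangle=-1$ — which is exactly what turns the radical--axis computation of Section \ref{sec:r2d2} into its $\RR^{2,1}$ analogue, cf. \eqref{eq:planefromcone} — and matching the Hodge--star conventions of \cite{Compagnoni2013a} so that $\ast(\mb{d_{ji}}\wedge\mb{e_3})$ is correctly identified with a nonzero vector of $\Pi$ orthogonal to $\mb{d_{ji}}$.
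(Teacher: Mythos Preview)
Your argument is correct. The route, however, differs from the paper's. The paper stays entirely inside the exterior--algebra formalism: it encodes $L_{ji}(\bs{\TT})$ via the $2$--form $\mb{D_{ji}}(\bs{\TT})^\flat\wedge\mb{e_3}^\flat$, reads off the direction vector as its Hodge dual by invoking Corollary~A.6 of \cite{Compagnoni2013a}, and then obtains $\mb{L_0}(\bs{\TT})$ by intersecting $L_{ji}(\bs{\TT})$ with the auxiliary plane $i_{\mb{D_i}(\X,\bs{\TT})}\!\big(\ast(\mb{D_{ji}}(\bs{\TT})\wedge\mb{e_3})\big)=0$, solving the resulting $3$--form equation $i_{\mb{D_i}(\mb{L_0}(\bs{\TT}))}\bs{\Omega_{ji}}^\flat=\cdots$ through Lemma~A.7 of \cite{Compagnoni2013a}. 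You instead fix the adapted basis $\{\mb{d_{ji}},\mb{v}(\bs{\TT}),\mb{e_3}\}$ at the outset and reduce both plane conditions to a bare $2\times2$ linear system in $(\alpha,\gamma)$, with $\beta$ free. What your approach buys is self--containedness (no appeal to the appendix lemmas of \cite{Compagnoni2013a}) and a transparent derivation of the coefficient in \eqref{eq:displvect}; what the paper's approach buys is uniformity, since the same exterior--algebra template is reused verbatim in Proposition~\ref{Sol}, in the $\RR^{3,1}$ computations of Section~\ref{sec:TOA-3D}, and in the TDOA comparison of Sections~\ref{sec:TDOA-summary}--\ref{sec:TDOAvsTDOA}.
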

\begin{proof}
Since $\m{i} \neq \m{j},$ it follows that $\mb{D_{ji}}(\bs{\TT})$ and $\mb{e_3}$ are linearly independent, hence $L_{ji}(\bs{\TT})$ is a line. The equation of $L_{ji}(\bs{\TT})$ is:
\begin{align*}
i_{\mb{D_i}(\mb{X},\bs{\TT})}(\mb{D_{ji}}(\bs{\TT})^\flat\wedge\mb{e_3}^\flat)
= \frac{1}{2} \Vert \mb{D_{ji}}(\bs{\TT})\Vert^2\mb{e_3}^\flat -
\TT_i\, \mb{D_{ji}}(\bs{\TT})^\flat .
\end{align*}
A vector $\mb{v}(\bs{\TT})$ is parallel to the line $L_{ji}(\bs{\TT})$ if it is a solution of the homogeneous equation
\begin{align*}
i_{\mb{v}(\bs{\TT})}(\mb{D_{ji}}(\bs{\TT})^\flat \wedge \mb{e_3}^\flat) = 0.
\end{align*}
This is equivalent to (see Corollary A.6 of \cite{Compagnoni2013a})
\begin{align*}
\mb{v}(\bs{\TT}) = t \ast(\mb{D_{ji}}(\bs{\TT})^\flat\wedge\mb{e_3}^\flat)^\sharp
= t \ast (\mb{D_{ji}}(\bs{\TT}) \wedge \mb{e_3})
= t \ast (\mb{d_{ji}} \wedge \mb{e_3})
\end{align*}
for $t \in \RR$, $t \neq 0$. The first claim of the statement easily follows by setting $t = 1.$

Next, we need a point $\mb{L_0}\left(\bs{\mathcal{T}}\right)\in L_{ji}(\bs{\TT}),$ thus we take the intersection between $L_{ji}(\bs{\TT})$ and the plane $i_{\mb{D_i}(\X,\bs{\TT})}(\ast(\mb{D_{ji}}(\bs{\TT}) \wedge \mb{e_3})) = 0$. If we set
$$
\bs{\Omega_{ji}} =\mb{D_{ji}}(\bs{\TT}) \wedge \mb{e_3} \wedge
\ast ( \mb{D_{ji}}(\bs{\TT}) \wedge \mb{e_3})=
\mb{d_{ji}} \wedge \mb{e_3} \wedge \ast ( \mb{d_{ji}} \wedge \mb{e_3}),
$$
then the equation for $\mb{D_i}(\mb{L_0}(\bs{\TT}))$ is
\begin{align*}
i_{\mb{D_i}(\mb{L_0}(\bs{\TT}))} \bs{\Omega_{ji}}^\flat =
\left(\frac{1}{2} \left\|\mb{D_{ji}}(\bs{\TT})\right\|^{2} \mb{e_3}^\flat -
T_{i} \mb{D_{ji}}(\bs{\TT})^\flat \right) \wedge
\ast \left(\mb{D_{ji}}(\bs{\TT})^\flat \wedge \mb{e_3}^\flat\right).
\end{align*}
Hence (see Lemma A.7 of \cite{Compagnoni2013a})
\begin{align*}
\mb{D_i}(\mb{L_0}(\bs{\TT})) = \frac{1}{2\ast \bs{\Omega_{ji}}}
\ast \left[ \left( \Vert\mb{D_{ji}}(\bs{\TT})\Vert^{2} \mb{e_3} -
2\mathcal{T}_{i}\mb{D_{ji}}(\bs{\TT}) \right) \wedge
\ast ( \mb{D_{ji}}(\bs{\TT}) \wedge \mb{e_3}) \right],
\end{align*}
that can be simplified with straightforward computations to obtain formula \eqref{eq:displvect}.
\end{proof}
Now we can give the explicit expressions for $\Lambda(\bs{\TT}).$
\begin{prop}\label{Sol}
Let $ 1 \leq i, j, k \leq 3,$ different each other.

If $\m{1}$, $\m{2}$ and $\m{3}$ are not on a line, then $\Lambda(\bs{\TT})= \Pi(\bs{\TT}) \cap \Pi^i_j(\bs{\TT}) \cap \Pi^i_k(\bs{\TT})$ is the point $\mb{L}(\bs{\TT})$ given by
\begin{equation}\label{eq:point}
\mb{D_i}(\mb{L}(\bs{\TT})) =
\frac{\ast\left[\left((d_{ji}^2-\TT_j^2+\TT_i^2)\mb{d_{ki}}-
(d_{ki}^2-\TT_k^2+\TT_i^2)\mb{d_{ji}}\right)\wedge\mb{e_3}\right]}
{2\ast\left(\mb{d_{ji}}\wedge\mb{d_{ki}}\wedge\mb{e_3}\right)}-
\TT_{i}\,\mb{e_3}.
\end{equation}

If $\m{1}$, $\m{2}$ and $\m{3}$ lie on a line, let $\rho \in \mathbb{R}$ such that $\mb{d_{31}} = \rho\,\mb{d_{21}}$. If it holds
\begin{equation}\label{eq:comp}
\TT_3^2 = (1 - \rho) \TT_1^2 + \rho \TT_2^2 - \rho (1-\rho) d_{21}^2,
\end{equation}
then $\Lambda(\bs{\TT}) = L_{21}(\bs{\TT}) = L_{31}(\bs{\TT}) = L_{32}(\bs{\TT})$, otherwise $\Lambda(\bs{\TT}) = \varnothing$.
\end{prop}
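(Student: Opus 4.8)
The plan is to read $\Lambda(\bs{\TT}) = \Pi(\bs{\TT})\cap\Pi^i_j(\bs{\TT})\cap\Pi^i_k(\bs{\TT})$ as the intersection of the line $L_{ji}(\bs{\TT})$ of Lemma \ref{4am} with the remaining bisector plane $\Pi^i_k(\bs{\TT})$, and to use a symmetric description of those planes. From $\Vert\mb{D_j}\Vert^2 = \Vert\mb{D_i}\Vert^2 - 2\langle\mb{D_i},\mb{D_{ji}}\rangle + \Vert\mb{D_{ji}}\Vert^2$ one sees that the defining equation of $\Pi^i_j(\bs{\TT})$ is equivalent to $\Vert\mb{D_i}(\X,\bs{\TT})\Vert^2 = \Vert\mb{D_j}(\X,\bs{\TT})\Vert^2$; hence $\Pi^i_j = \Pi^j_i$, and any two of $\Pi^1_2,\Pi^1_3,\Pi^2_3$ cut out the same set $\{\X:\Vert\mb{D_1}\Vert^2=\Vert\mb{D_2}\Vert^2=\Vert\mb{D_3}\Vert^2\}$. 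In particular $\Lambda(\bs{\TT}) = L_{ji}(\bs{\TT})\cap\Pi^i_k(\bs{\TT})$ for any choice of the three distinct labels, a fact I will invoke in the collinear case.

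First I would treat the non-collinear receivers. Substituting the parametrization $\X(\lambda;\bs{\TT}) = \mb{L_0}(\bs{\TT}) + \lambda\,\mb{v}(\bs{\TT})$, with $\mb{v}(\bs{\TT}) = \ast(\mb{d_{ji}}\wedge\mb{e_3})$, into the equation $\langle\mb{D_i}(\X,\bs{\TT}),\mb{D_{ki}}(\bs{\TT})\rangle = \tfrac12\Vert\mb{D_{ki}}(\bs{\TT})\Vert^2$ of $\Pi^i_k(\bs{\TT})$ gives an affine equation in $\lambda$ whose leading coefficient is $\langle\mb{v}(\bs{\TT}),\mb{D_{ki}}(\bs{\TT})\rangle = \mb{v}(\bs{\TT})\cdot\mb{d_{ki}}$, since $\mb{v}(\bs{\TT})$ has no $\mb{e_3}$-component. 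As $\mb{v}(\bs{\TT})\perp\mb{d_{ji}}$, this coefficient vanishes exactly when $\mb{d_{ki}}\parallel\mb{d_{ji}}$, i.e. when $\m{1},\m{2},\m{3}$ are collinear, so in the non-collinear case there is a unique solution $\mb{L}(\bs{\TT})$. For the closed form \eqref{eq:point} I would either feed the resulting $\lambda$ back into the parametrization, or argue directly: $\mb{L}(\bs{\TT})\in\Pi$ forces $\mb{D_i}(\mb{L}(\bs{\TT})) = \mb{d_i}(\x) - \TT_i\,\mb{e_3}$ for the projection $\x$ of $\mb{L}(\bs{\TT})$ onto the space plane, and expanding the two bisector equations turns them into the $2 \times 2$ linear system $\mb{d_i}(\x)\cdot\mb{d_{ji}} = \tfrac12(d_{ji}^2-\TT_j^2+\TT_i^2)$, $\mb{d_i}(\x)\cdot\mb{d_{ki}} = \tfrac12(d_{ki}^2-\TT_k^2+\TT_i^2)$; solving it by Cramer's rule (with $\mb{d_{ji}},\mb{d_{ki}}$ a basis of the space plane) and rewriting through the $\RR^{2,1}$ Hodge operator, in the style of Corollary A.6 and Lemma A.7 of \cite{Compagnoni2013a}, produces \eqref{eq:point}.

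Next I would do the collinear case, with $\mb{d_{31}} = \rho\,\mb{d_{21}}$ and $(i,j,k)=(1,2,3)$. Here the direction of $L_{21}(\bs{\TT})$ satisfies $\langle\mb{v}(\bs{\TT}),\mb{D_{31}}(\bs{\TT})\rangle = \mb{v}(\bs{\TT})\cdot\mb{d_{31}} = \rho\,(\mb{v}(\bs{\TT})\cdot\mb{d_{21}}) = 0$, so the affine equation produced by $\Pi^1_3(\bs{\TT})$ loses its $\lambda$-term and degenerates to a condition on $\bs{\TT}$ alone, namely $\langle\mb{D_1}(\mb{L_0}(\bs{\TT})),\mb{D_{31}}(\bs{\TT})\rangle = \tfrac12\Vert\mb{D_{31}}(\bs{\TT})\Vert^2$. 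Plugging in \eqref{eq:displvect} together with $\mb{D_{31}}(\bs{\TT}) = \rho\,\mb{d_{21}} + (\TT_3-\TT_1)\,\mb{e_3}$ and simplifying, I expect this to reduce precisely to the compatibility relation \eqref{eq:comp}. When \eqref{eq:comp} holds, every $\lambda$ is a solution, so $\Lambda(\bs{\TT}) = L_{21}(\bs{\TT})$; then $\Lambda(\bs{\TT}) = \Pi\cap\Pi^1_2\cap\Pi^1_3\subseteq \Pi\cap\Pi^1_3 = L_{31}(\bs{\TT})$ and, since $\Pi^1_2=\Pi^2_1$, also $\Lambda(\bs{\TT})\subseteq \Pi\cap\Pi^2_3 = L_{32}(\bs{\TT})$; a line contained in a line equals it, hence $L_{21}(\bs{\TT}) = L_{31}(\bs{\TT}) = L_{32}(\bs{\TT})$. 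When \eqref{eq:comp} fails, the condition is never satisfied and $\Lambda(\bs{\TT}) = \varnothing$.

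The conceptual content is modest: the rank dichotomy encoded in $\langle\mb{v},\mb{D_{ki}}\rangle = \mb{v}\cdot\mb{d_{ki}}$, and the observation that the three bisector planes pairwise coincide, which makes the collinear case a transparent ``empty versus one common line'' alternative. The laborious part --- and the place I would watch most carefully for a sign slip --- is the bookkeeping: choosing the sign and normalization of the $\RR^{2,1}$ Hodge star so that the Cramer solution of the $2 \times 2$ system reproduces \eqref{eq:point} verbatim, and carrying out the scalar algebra that collapses $\langle\mb{D_1}(\mb{L_0}),\mb{D_{31}}\rangle = \tfrac12\Vert\mb{D_{31}}\Vert^2$ to \eqref{eq:comp}.
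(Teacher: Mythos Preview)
Your argument is correct and close in spirit to the paper's, but the two proofs organize the linear algebra differently in the non-collinear case. The paper does not pass through Lemma~\ref{4am}: it treats the three planes $\Pi,\Pi^i_j,\Pi^i_k$ symmetrically by assembling the $3$-form $\bs{\Omega_i}=\mb{D_{ji}}(\bs{\TT})\wedge\mb{D_{ki}}(\bs{\TT})\wedge\mb{e_3}$, writes the single equation $i_{\mb{D_i}(\mb{L}(\bs{\TT}))}\bs{\Omega_i}^\flat=\cdots$, and inverts it in one stroke via Lemma~A.7 of \cite{Compagnoni2013a} to obtain \eqref{eq:point}. Your route---first get the line $L_{ji}(\bs{\TT})$ from Lemma~\ref{4am}, then intersect with $\Pi^i_k$, or equivalently reduce to the $2\times 2$ Cramer system in the space plane---is equally valid and arguably more elementary, but it front-loads a choice of two of the three planes and then has to repackage the answer into Hodge form; the paper's symmetric $3$-form approach produces \eqref{eq:point} with less bookkeeping and no risk of a sign slip in reassembly.

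In the collinear case the two proofs coincide: both substitute the parametrization of $L_{21}(\bs{\TT})$ into the equation of $\Pi^1_3(\bs{\TT})$, observe that $\langle\mb{v}(\bs{\TT}),\mb{D_{31}}(\bs{\TT})\rangle=0$ kills the $\lambda$-term, and read off \eqref{eq:comp}. Your bisector observation $\Pi^i_j=\Pi^j_i=\{\Vert\mb{D_i}\Vert^2=\Vert\mb{D_j}\Vert^2\}$ is a genuine addition: the paper's proof stops at the compatibility condition and does not spell out why $L_{21}=L_{31}=L_{32}$, whereas your ``line contained in a line'' argument makes that equality transparent.
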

\begin{proof}
If $\mb{d_{ji}}$ and $\mb{d_{ki}}$ are linearly independent, then $\mb{d_{ji}}$, $\mb{d_{ki}}$ and $\mb{e_3}$ are linearly independent too because the subspaces spanned by $\mb{d_{ji}},\mb{d_{ki}}$ and by $\mb{e_1},\mb{e_2}$ are equal. Hence, $\mb{D_{ji}}(\bs{\TT})$, $\mb{D_{ki}}(\bs{\TT})$ and $\mb{e_3}$ are also linearly independent and the three corresponding planes meet at a single point $\mb{L}(\bs{\TT})$.

As above, we set the three form
$$
\bs{\Omega_i} = \mb{D_{ji}}(\bs{\TT})\wedge\mb{D_{ki}}(\bs{\TT})\wedge\mb{e_3}
= \mb{d_{ji}}\wedge\mb{d_{ki}}\wedge\mb{e_3}.
$$
Then the equation for $\mb{D_i}(\mb{L}(\bs{\TT}))$ is
\begin{align*}
i_{\mb{D_i}(\mb{L}(\bs{\TT}))} \bs{\Omega_i}^\flat =
\frac{1}{2}\left(\Vert\mb{D_{ji}}(\bs{\TT})\Vert^2\mb{D_{ki}}(\bs{\TT})-
\Vert\mb{D_{ki}}(\bs{\TT})\Vert^2\mb{D_{ji}}(\bs{\TT})\right)\wedge\mb{e_3}+
\TT_i\mb{D_{ji}}(\bs{\TT})\wedge\mb{D_{ki}}(\bs{\TT}).
\end{align*}
It follows from Lemma A.7 in \cite{Compagnoni2013a} that $\mb{D_i}(\mb{L}(\bs{\TT}))$ is given by
\begin{equation*}
\mb{D_i}(\mb{L}(\bs{\TT})) = \frac{1}{2\ast\bs{\Omega_i}}
\ast \left[\left(\Vert\mb{D_{ji}}(\bs{\TT})\Vert^2\mb{D_{ki}}(\bs{\TT})-
\Vert\mb{D_{ki}}(\bs{\TT})\Vert^2\mb{D_{ji}}(\bs{\TT})\right)\wedge\mb{e_3}+
2\TT_i\mb{D_{ji}}(\bs{\TT})\wedge\mb{D_{ki}}(\bs{\TT})\right].
\end{equation*}
Thus, we get formula \eqref{eq:point} with straightforward computations.

Assume now that the receivers lie on a line. It follows that $\mb{d_{21}}$ and $\mb{d_{31}}$ are linearly dependent, i.e. there exists $\rho \in \RR$ such that $\mb{d_{31}} = \rho \mb{d_{21}}$. As usual, we assume that $\m{1}$, $\m{2}$ and $\m{3}$ are all distinct, which means $\rho \neq 0,1$.
The vectors $\mb{D_{21}}(\bs{\TT})$ and $\mb{e_3}$ are linearly independent and so $\Pi(\bs{\TT}) \cap \Pi_2^1(\bs{\TT})$ is the line $L_{21}(\bs{\TT})$.

By Lemma \ref{4am}, $L_{21}(\bs{\TT})$ is parallel to vector $ \ast\left(\mb{d_{21}} \wedge \mb{e_3}\right).$ On the other hand, it is rather straightforward to verify that $ \langle \mb{D_{31}}(\bs{\TT}), \ast( \left(\mb{d_{21}} \wedge \mb{e_3}\right) \rangle = 0,$ thus $L_{21}(\bs{\TT})$ is orthogonal to $\mb{D_{31}}(\bs{\TT}).$ Since also $ \Pi^1_3(\bs{\TT})$ is orthogonal to $ \mb{D_{31}}(\bs{\TT}),$ it follows that $L_{21}(\bs{\TT})$ is parallel to $ \Pi^1_3(\bs{\TT}).$
This implies that $\Lambda(\bs{\TT})\neq\varnothing$ if, and only if, $L_{21}(\bs{\TT})$ is contained in $ \Pi^1_3(\bs{\TT}).$ By direct substitution of the parametric equation of $L_{21}(\bs{\TT})$ from Lemma \ref{4am} in the equation
$$
\langle \mb{D_1}(\mb{X},\bs{\TT}), \mb{D_{31}}(\bs{\TT})\rangle = \frac{1}{2} \Vert\mb{D_{31}}(\bs{\TT})\Vert^{2}
$$
of $ \Pi^1_3(\bs{\TT}),$ we obtain the compatibility condition \eqref{eq:comp}.
\end{proof}
\begin{rem}
The point $\mb{L}\left(\bs{\mathcal{T}}\right)$ given by \eqref{eq:point} can be explicitly rewritten in the variables $\left(x,y,\mathcal{T}\right)$ as
\begin{equation}\label{eq:invmap}
\begin{pmatrix} x - x_{i}
\\ y - y_{i}
\end{pmatrix} = \frac{1}{2}
\begin{pmatrix}
x_{j} - x_{i} & y_{j} - y_{i}\\
x_{k} - x_{i} & y_{k} - y_{i}
\end{pmatrix}^{-1} \begin{pmatrix} d_{ji}^{2} + \mathcal{T}_{i}^{2} - \mathcal{T}_{j}^{2}
\\ d_{ki}^{2} + \mathcal{T}_{i}^{2} - \mathcal{T}_{k}^{2}
\end{pmatrix}
\end{equation}
and $\mathcal{T} = 0$, where, as usual, $ 1 \leq i,j,k \leq 3 $ are different from each other. Notice that, if $\bs\TT=(\TT_1,\TT_2,\TT_3)$ lies on the image of $\bs{\TT_3},$ formula \eqref{eq:invmap} solves the localization problem because it gives the source position $\x=\bs{\TT_3}^{-1}(\bs{\TT}).$
As observed in \cite{Compagnoni2013a}, formula \eqref{eq:invmap} can be used as the starting point and building block for a local error propagation analysis in the case of noisy measurements or even with sensor calibration uncertainty. Furthermore, it is interesting to notice that there exists a distinct inverse formula for each reference sensor. Although they are completely equivalent from a theoretical point of view, they behave differently if we account for numerical approximations. Indeed, an ongoing analysis based on the condition numbers of the matrices \eqref{eq:invmap} shows that there exists an optimal choice of the reference sensor for each given configuration of the receivers.
\end{rem}


\section{The image of $\bs{\TT_3}$ when the receivers are not collinear}\label{sec:2Dr3}
In this section we study of the set of feasible range measurements, i.e. the image $\textrm{Im}\left(\bs{\TT_3}\right)$ of the range map, under the assumption that $\m{1}$, $\m{2}$ and $\m{3}$ are not collinear. In Subsection \ref{sec:ImsubsetKum} we prove that $Im\left(\bs{\TT_3}\right)$ is contained in a Kummer's quartic surface. This kind of surfaces was discovered in the nineteenth century and their geometric properties have been investigated in the framework of projective algebraic geometry (we invite the reader to \cite{hudson} as a general reference on Kummer's surfaces). The relation between the space of range measurements and the Kummer's was first discussed in \cite{Berry1992}, where the author was interested in characterizing the points at rational distance from a given triangle. Here we directly link the geometry of Kummer's surfaces to the source localization problem. In Subsection \ref{sec:geompropIm} we use the results on such surfaces in order to describe in depth $\text{Im}\left(\bs{\TT_3}\right).$ In particular, we find an unbounded convex polyhedron $Q_3$ that contains $\text{Im}(\bs{\TT_3}).$ We prove that the intersection $\text{Im}(\bs{\TT_3})\cap Q_3$ is given by the only arcs of conics contained into $\text{Im}(\bs{\TT_3}),$ that are also the only points having zero Gaussian curvature. Such polyhedron is a good approximation of the convex hull $\mathcal{E}$ of $\text{Im}(\bs{\TT_3}).$ Finally, in Subsection \ref{sec:Cayley} we study the parameter space of the TOA--based localization, i.e. the set describing every configuration of three receivers (up to rigid movements).

\subsection{$\text{Im}\left(\bs{\TT_3}\right)$ is contained in a Kummer's surface}\label{sec:ImsubsetKum}
Although the map $\bs{\TT_3}$ is not defined in terms of polynomials, in this section we are going to prove that its image is actually a real semialgebraic surface.
We remind that the solution of the localization problem is contained in the intersection of $\Lambda(\bs{\TT})=\Pi(\bs\TT)\cap\Pi_i^j(\bs\TT)\cap\Pi_i^k(\bs\TT),$ with $i,j,k$ any permutation of $1,2,3,$ and the cone $C_i(\bs\TT).$ By Proposition \ref{Sol}, $\Lambda(\bs{\TT})$ is the point $\mb{L}(\bs{\mathcal{T}})$. By substituting $\mb{D_i}(\mb{L}(\bs{\TT}))$, given by ~\eqref{eq:point}, into the equation of the cone $C_i(\bs\TT)$ in ~\eqref{eq:Formulation_i}, we get a compatibility equation in $\bs\TT$ that defines an algebraic surface $S$ in the $\TT$--space:
\begin{equation}\label{eq:imm1}
\left\|\left[\frac{1}{2}\left\|\mb{D_{ji}}\left(\bs{\mathcal{T}}\right)\right\|^{2} - \mathcal{T}_{i}\left(\mathcal{T}_{j}-\mathcal{T}_{i}\right)\right] \mb{d_{ki}} - \left[\frac{1}{2}\left\|\mb{D_{ki}}\left(\bs{\mathcal{T}}\right)\right\|^{2} - \mathcal{T}_{i}\left(\mathcal{T}_{k}-\mathcal{T}_{i}\right) \right] \mb{d_{ji}}\right\|^{2} - \mathcal{T}_{i}^{2}\left\|\mb{d_{ji}}\wedge\mb{d_{ki}}\right\|^{2} = 0.
\end{equation}
With further simplifications, we arrive to the equivalent equation
\begin{equation}\label{eq:imm2}
\left\|\mathcal{T}_{1}^{2}\mb{d_{32}} - \mathcal{T}_{2}^{2}\mb{d_{31}} + \mathcal{T}_{3}^{2}\mb{d_{21}}\right\|^{2} - 2\mathcal{T}_{1}^{2}d_{32}^{2}\mb{d_{21}}\cdot\mb{d_{31}} + 2\mathcal{T}_{2}^{2}d_{31}^{2}\mb{d_{21}}\cdot\mb{d_{32}} - 2\mathcal{T}_{3}^{2}d_{21}^{2}\mb{d_{31}}\cdot\mb{d_{32}} + d_{21}^{2}d_{31}^{2}d_{32}^{2} = 0.
\end{equation}
An example of the surfaces $S$ is given in Figure
\ref{fig:kummer}.
\begin{figure}[htb!]
\centering
 \includegraphics[width=9cm,bb=163 268 448 523]{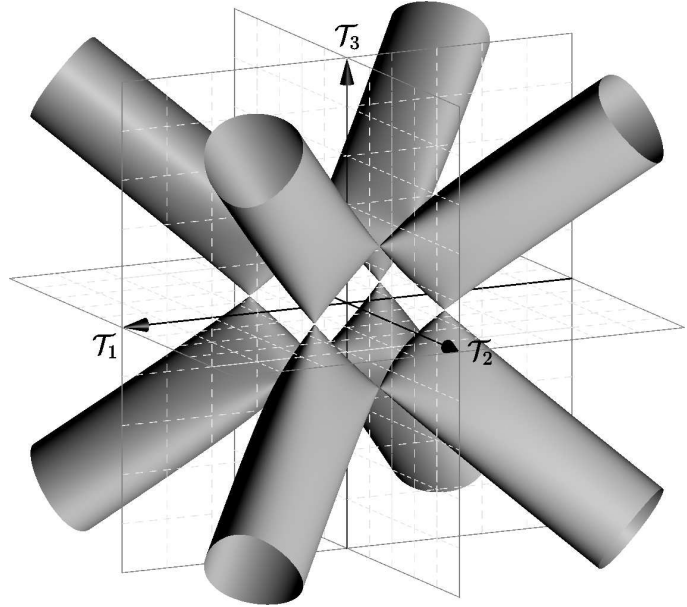}
 \caption{The real part of the Kummer's surface $S$ defined by equation \eqref{eq:imm2}, for the sensors configuration depicted in Figure \ref{fig:rettecerchio}. By symmetry, $S$ can be obtained from that portion of the surface that is contained in the first octant. To go from one octant to the others along the surface, one has to pass through the singular points of $S.$}\label{fig:kummer}
 \end{figure}

\begin{thm}\label{th:1oct}
The image of $\bs{\TT_3}$ is the part of $S$ contained in the first octant $W$ of the $\TT$--space.
\end{thm}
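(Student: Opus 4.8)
The plan is to establish the two inclusions $\text{Im}(\bs{\TT_3}) \subseteq S \cap W$ and $S \cap W \subseteq \text{Im}(\bs{\TT_3})$ separately, using the reformulation of the localization problem built in Section \ref{sec:MK}. The role of the first octant $W = \{\TT_1,\TT_2,\TT_3 \geq 0\}$ is precisely to make \emph{reversible} the squaring steps that led from system \eqref{eq:sys1} to the polynomial formulations \eqref{eq:Formulation_i}: over $W$ one can pass from $d_i(\x)^2 = \TT_i^2$ back to $d_i(\x) = \TT_i$ because both sides are nonnegative. The standing hypothesis that $\m{1},\m{2},\m{3}$ are not collinear enters through Proposition \ref{Sol}, which guarantees that $\Lambda(\bs{\TT})$ is the single point $\mb{L}(\bs{\TT})$, so that substituting it into a cone equation produces a genuine necessary \emph{and} sufficient compatibility condition.

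For $\text{Im}(\bs{\TT_3}) \subseteq S \cap W$, I would take $\bs{\TT} = \bs{\TT_3}(\x)$ with $\x=(x,y)\in\RR^2$. Then $\TT_i = d_i(\x) \geq 0$, so $\bs{\TT}\in W$. Consider the spacetime point $\X = (x,y,0)$: since $\Vert\mb{D_i}(\X,\bs{\TT})\Vert^2 = d_i(\x)^2 - \TT_i^2 = 0$ for each $i$ and $\TT=0$, the point $\X$ solves system \eqref{eq:sys4}, hence the equivalent $i$--th Formulation \eqref{eq:Formulation_i}; in particular $\X \in \Lambda(\bs{\TT})\cap C_i(\bs{\TT})$. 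By Proposition \ref{Sol} (here non-collinearity is used) $\Lambda(\bs{\TT}) = \{\mb{L}(\bs{\TT})\}$, so $\X = \mb{L}(\bs{\TT})$ and $\mb{D_i}(\mb{L}(\bs{\TT}))$ is given by \eqref{eq:point}. Imposing $\X\in C_i(\bs{\TT})$, i.e. plugging \eqref{eq:point} into $\Vert\mb{D_i}(\X,\bs{\TT})\Vert^2 = 0$, is by construction exactly equation \eqref{eq:imm1}, which simplifies to \eqref{eq:imm2}; hence $\bs{\TT}\in S$.

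For the reverse inclusion, let $\bs{\TT}\in S\cap W$. Define $\mb{L}(\bs{\TT})$ by \eqref{eq:point} (equivalently by \eqref{eq:invmap}); since it lies on $\Pi=\{\TT=0\}$ it has the form $\mb{L}(\bs{\TT}) = (x,y,0)$ for a unique $\x=(x,y)\in\RR^2$. By construction $\mb{L}(\bs{\TT})\in\Pi^i_j(\bs{\TT})\cap\Pi^i_k(\bs{\TT})$, and the hypothesis $\bs{\TT}\in S$ says exactly that \eqref{eq:imm1} holds, i.e. $\mb{L}(\bs{\TT})\in C_i(\bs{\TT})$, so $\Vert\mb{D_i}(\mb{L}(\bs{\TT}))\Vert^2 = d_i(\x)^2 - \TT_i^2 = 0$. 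I would then run the identity behind \eqref{eq:planefromcone} backwards: from $\langle\mb{D_i},\mb{D_{ji}}\rangle = \tfrac12\Vert\mb{D_{ji}}\Vert^2$ (the equation of $\Pi^i_j$) and $\Vert\mb{D_i}\Vert^2 = 0$, the expansion $\Vert\mb{D_j}\Vert^2 = \Vert\mb{D_i}\Vert^2 - 2\langle\mb{D_i},\mb{D_{ji}}\rangle + \Vert\mb{D_{ji}}\Vert^2$ forces $\Vert\mb{D_j}(\mb{L}(\bs{\TT}))\Vert^2 = 0$, i.e. $d_j(\x)^2 = \TT_j^2$; likewise $d_k(\x)^2 = \TT_k^2$. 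Hence $d_\ell(\x)^2 = \TT_\ell^2$ for $\ell=1,2,3$, and since $d_\ell(\x)\geq 0$ and $\TT_\ell\geq 0$ (this is where $W$ is used) we get $d_\ell(\x) = \TT_\ell$, that is $\bs{\TT_3}(\x) = \bs{\TT}$.

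Most of this is bookkeeping once the dictionary of Section \ref{sec:MK} is in place. The points that genuinely need care are: (i) the chain of equivalences $\eqref{eq:sys1}\Leftrightarrow\eqref{eq:sys4}\Leftrightarrow\eqref{eq:Formulation_i}\Leftrightarrow$ ``$\mb{L}(\bs{\TT})\in\Lambda(\bs{\TT})\cap C_i(\bs{\TT})$'', which is an equivalence only over $W$ and only under non-collinearity (via Proposition \ref{Sol}); and (ii) the verification, already indicated between the two displayed lines, that the compatibility relation \eqref{eq:imm1} collapses to the symmetric form \eqref{eq:imm2}, so that $S$ does not depend on the reference index $i$. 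I expect the algebraic simplification $\eqref{eq:imm1}\to\eqref{eq:imm2}$ to be the most laborious part, but it is routine polynomial expansion rather than a real obstacle.
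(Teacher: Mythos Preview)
Your proposal is correct and follows essentially the same two-inclusion strategy as the paper: the forward inclusion is the derivation of \eqref{eq:imm1}--\eqref{eq:imm2} together with $d_i(\x)\geq 0$, and the reverse inclusion constructs the candidate preimage $\mb{L}(\bs{\TT})$ via \eqref{eq:point}/\eqref{eq:invmap} and verifies $\bs{\TT_3}(\mb{L}(\bs{\TT}))=\bs{\TT}$. The only difference is one of explicitness: where the paper simply declares that this last verification is ``straightforward to check'' using \eqref{eq:imm2}, you run the Minkowski identity $\Vert\mb{D_j}\Vert^2 = \Vert\mb{D_i}\Vert^2 - 2\langle\mb{D_i},\mb{D_{ji}}\rangle + \Vert\mb{D_{ji}}\Vert^2$ backward to propagate the cone condition from index $i$ to $j$ and $k$, and you isolate exactly where the restriction to $W$ is needed to undo the squaring. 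This makes the logical dependencies (non-collinearity via Proposition \ref{Sol}, positivity via $W$) more transparent than in the paper's compressed version, but the underlying argument is the same.
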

\begin{proof}
The previous argument shows that $Im(\bs{\TT_3}) \subseteq S.$ As observed in Section \ref{sec:MK}, it is obvious that $Im(\bs{\TT_3})$ is contained in $W$, as $d_i(\x)\geq 0.$

Conversely, let $\bs{\bar{\TT}} = \left(\bar\TT_1, \bar\TT_2, \bar\TT_3\right) \in S\cap W$ and let $\mb{L}\left(\bs{\bar\TT}\right)$ be the point given by ~\eqref{eq:invmap}. With abuse of notation we denote with $\mb{L}\left(\bs{\bar\TT}\right)$ also the projection of $\mb{L}\left(\bs{\bar\TT}\right)$ on the x--plane, because the two points differ only for the last coordinate $\mathcal{T} = 0$.\\ By using ~\eqref{eq:imm2}, it is straightforward to check that
\begin{align*}
\bs{\TT_3}\left(\mb{L}\left(\bs{\bar\TT}\right) \right) = \left(\bar{\mathcal{T}}_{1}, \bar{\mathcal{T}}_{2}, \bar{\mathcal{T}}_{3}\right)
\end{align*}
and so $\left(\bar{\mathcal{T}}_{1}, \bar{\mathcal{T}}_{2}, \bar{\mathcal{T}}_{3}\right) \in Im\left(\bs{\TT_3}\right)$.
\end{proof}

If we expand equation \eqref{eq:imm2}, with easy computations we get
 \begin{equation*} \begin{split}
 & d_{32}^2 \mathcal{T}_1^4 + d_{31}^2 \mathcal{T}_2^4 + d_{21}^2 \mathcal{T}_3^4- \\
 & -2 \mb{d_{32}} \cdot \mb{d_{31}} \mathcal{T}_1^2 \mathcal{T}_2^2
+2 \mb{d_{32}} \cdot \mb{d_{21}} \mathcal{T}_1^2 \mathcal{T}_3^2 -2 \mb{d_{31}} \cdot \mb{d_{21}} \mathcal{T}_2^2 \mathcal{T}_3^2 -\\
& -2 \mb{d_{21}} \cdot \mb{d_{31}} d_{32}^2 \mathcal{T}_1^2 + 2 \mb{d_{32}} \cdot \mb{d_{21}} d_{31}^2\mathcal{T}_2^2 - 2 \mb{d_{32}} \cdot \mb{d_{31}} d_{21}^2\mathcal{T}_3^2+\\
& + d_{21}^2 d_{31}^2 d_{32}^2 = 0.
\end{split} \end{equation*}
As the polynomial contains only even degree monomials, the surface $S$ is clearly symmetric with respect to the coordinate planes, the coordinate axes and the origin. Furthermore there is a symmetry with respect to a relabelling of the receivers.

Let us set the rescaled coordinates
\begin{equation}\label{eq:rescaledcoord}
t_1 = \frac{\mathcal{T}_1}{\sqrt{d_{21} d_{31}}}\,,\qquad
t_2 = \frac{\mathcal{T}_2}{\sqrt{d_{21} d_{32}}}\,,\qquad
t_3 = \frac{\mathcal{T}_3}{\sqrt{d_{31} d_{32}}}\,.
\end{equation}
By dividing the above equation times $ d_{21}^2 d_{31}^2 d_{32}^2,$ that is non zero because the sensors are pairwise distinct, and by homogenizing it with respect to the variable $ t_0,$ we get
\begin{equation*}
t_0^4 + t_1^4 + t_2^4 + t_3^4 -2 \mb{\tilde{d}_{32}} \cdot \mb{\tilde{d}_{31}} (t_1^2 t_2^2 + t_0^2 t_3^2)
+2 \mb{\tilde{d}_{32}} \cdot \mb{\tilde{d}_{21}} (t_1^2 t_3^2 + t_0^2t_2^2)
-2 \mb{\tilde{d}_{31}} \cdot \mb{\tilde{d}_{21}} (t_2^2 t_3^2 + t_0^2 t_1^2) = 0.\end{equation*}
We set $ a = \mb{\tilde{d}_{32}} \cdot \mb{\tilde{d}_{31}}, b = \mb{\tilde{d}_{32}} \cdot \mb{\tilde{d}_{21}}, c = \mb{\tilde{d}_{31}} \cdot \mb{\tilde{d}_{21}},$ and so we finally arrive to
\begin{equation} \label{Kum-proj}
t_0^4 + t_1^4 + t_2^4 + t_3^4 -2 a (t_1^2 t_2^2 + t_0^2 t_3^2) +2 b (t_1^2 t_3^2 + t_0^2t_2^2) -2 c (t_2^2 t_3^2 + t_0^2 t_1^2) = 0.
\end{equation}
Equation (\ref{Kum-proj}) defines the projective closure $ \bar{S}\subset\PP^3 $ of the surface $ S $ defined by equation \eqref{eq:imm2}, when we identify the affine space $ \mathbb{A}^3 $ with $ \mathbb{P}^3 $ minus the plane $t_0=0.$

The singular locus $ \Sing(\bar{S}) $ of $ \bar{S} $ is the projective algebraic set defined by $ \nabla F = 0 $ where $$ F(t_0,t_1,t_2,t_3) = t_0^4 + t_1^4 + t_2^4 + t_3^4 -2 a (t_1^2 t_2^2 + t_0^2 t_3^2) +2 b (t_1^2 t_3^2 + t_0^2t_2^2) -2 c (t_2^2 t_3^2 + t_0^2 t_1^2).$$ With straightforward computations, we get that $ \Sing(\bar{S}) $ contains the $ 16 $ points having the following homogeneous coordinates (the signs of each coordinate can be chosen  independently from the others):
\begin{equation}\label{eq:puntisingolari}
\begin{array}{c}
(0 : \pm \sqrt{d_{32}} : \pm \sqrt{d_{31}} : \pm \sqrt{d_{21}})\\
(\pm \sqrt{d_{32}} : 0 : \pm \sqrt{d_{21}} : \pm \sqrt{d_{31}})\\
(\pm \sqrt{d_{31}} : \pm \sqrt{d_{21}} : 0 : \pm \sqrt{d_{32}})\\
(\pm \sqrt{d_{21}} : \pm \sqrt{d_{31}} : \pm \sqrt{d_{32}} : 0).
\end{array}
\end{equation}
The previous computation proves the following
\begin{thm}
$ \bar{S} $ is a Kummer's surface.
\end{thm}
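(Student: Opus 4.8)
The plan is to recall the classical definition of a Kummer's surface and verify it against the surface $\bar{S}\subset\PP^3$ defined by \eqref{Kum-proj}. A Kummer's surface is an irreducible quartic surface in $\PP^3$ whose singular locus consists of exactly $16$ ordinary double points (nodes); equivalently, it is the quotient of the Jacobian of a genus--$2$ curve by the $\pm 1$ involution, embedded in $\PP^3$ via the linear system of symmetric theta functions of level $2$. So the theorem amounts to checking three things for $\bar{S}$: (i) $F$ is an irreducible quartic form; (ii) the $16$ points listed in \eqref{eq:puntisingolari} are genuinely singular points of $\bar{S}$ and, in fact, nodes; (iii) there are no further singular points. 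I would organize the proof around these three verifications, invoking the classical characterization (see \cite{hudson}) that an irreducible quartic with exactly $16$ nodes is a Kummer's surface.

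First I would address irreducibility of $F$. Since the sensors are not collinear, $a,b,c$ are genuine cosines of the angles of a nondegenerate triangle, so they are not free parameters but satisfy one relation; nevertheless $F$ is a specific quartic and I would argue it cannot factor: a factorization into two quadrics or into a linear times a cubic would force $\bar{S}$ to contain a plane or a quadric surface, which is incompatible with $\bar{S}$ having only isolated (zero--dimensional) singularities once (iii) is established — alternatively one checks directly that $F$ is irreducible over $\CC(a,b,c)$ and remains so under the specialization coming from a nondegenerate triangle, e.g. by noting that the partial derivatives have no common factor. Second, for the $16$ points I would plug the coordinates \eqref{eq:puntisingolari} into $F$ and into all four partials $\partial F/\partial t_m$; these are the "straightforward computations" already referenced in the text, and they show the points lie on $\Sing(\bar{S})$. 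To see each is an \emph{ordinary} double point I would compute the Hessian quadratic form of $F$ at such a point (after dehomogenizing in a chart where that point is affine) and check it is nondegenerate, i.e.\ of rank $3$; this again reduces to a short determinant computation using $a,b,c$ and the $d_{ij}$, with the triangle inequality guaranteeing nonvanishing.

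The main obstacle — and the step I would spend the most care on — is (iii): showing $\Sing(\bar{S})$ contains \emph{no} points beyond the listed $16$. The clean way is the classical bound: a quartic surface in $\PP^3$ has at most $16$ nodes, and more generally if its singular locus is $0$--dimensional then the number of nodes is at most $16$ (Kummer's theorem), so once we exhibit $16$ distinct nodes the singular locus is exactly these points and the surface is automatically a Kummer's surface. Thus the real work is to confirm the $16$ points are \emph{distinct} (immediate from the explicit coordinates, using $d_{ij}>0$ and non-collinearity so that no two of $d_{21},d_{31},d_{32}$ force a coincidence) and to rule out the possibility that $\Sing(\bar{S})$ is positive-dimensional — which one does by the irreducibility argument of step (i) together with the observation that a reduced irreducible quartic singular along a curve would be a (possibly reembedded) scroll or cone, cases one excludes by inspecting the form $F$ and the location of the nodes. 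Putting (i)--(iii) together with the classical characterization of Kummer surfaces then yields the statement.

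As a remark, one can make the identification more concrete by exhibiting the associated genus--$2$ curve: the six coefficients $1, a, b, c$ (with the relations among $a,b,c$) encode the branch points, and the projective change of coordinates $t_0,\dots,t_3$ given in \eqref{eq:rescaledcoord} is precisely the one that puts the Kummer in its symmetric "Hudson" normal form. I would include this only as an \emph{observation}, since the theorem itself follows purely from the node count.
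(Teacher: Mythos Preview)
Your approach is essentially the same as the paper's --- verify the classical definition of a Kummer's surface by exhibiting the $16$ singular points --- but you are considerably more thorough. The paper's proof is a single sentence: it recalls that a Kummer's surface is a quartic in $\PP^3$ with $16$ isolated singular points and, since the preceding paragraph already computed that $\Sing(\bar{S})$ contains the $16$ points \eqref{eq:puntisingolari}, regards the claim as immediate.

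The paper does not explicitly address the points you flag: irreducibility of $F$, the fact that the $16$ listed points are \emph{all} of $\Sing(\bar{S})$, or that they are ordinary double points. Your use of the classical upper bound (at most $16$ nodes on a quartic) to close the gap between ``contains $16$ singular points'' and ``has exactly $16$ isolated nodes'' is exactly the right move, and your check that the points are pairwise distinct (using $d_{ij}>0$) is a detail the paper omits. So your proposal is correct and strictly more complete; the paper simply treats these verifications as part of the ``straightforward computations'' and the reference to \cite{hudson}.
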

\begin{proof}
A Kummer's surface is a surface in $ \mathbb{P}^3$ defined by a degree--$4$ homogeneous polynomial and having $ 16 $ isolated singular points.
\end{proof}
\begin{rem}\label{rm:singmodels}
The presence of singular points in $\textrm{Im}(\bs{\TT_3})$ has significant consequences for the TOA statistical model. For example, it is known that for a non--regular model the maximum likelihood estimator could not exist or it could not be subject to the asymptotically normal distribution (see \cite{Watanabe1999} and the references therein contained). This fact is particularly relevant in the case of near field source localization, since in the $x$--plane the singularities correspond to the position of the sensors. Although the analysis of the range statistical model is beyond the scopes of this manuscript, we observe that the large sample asymptotics at a singular point depends on the local geometry of $\textrm{Im}(\bs{\TT_3}),$ which can be described using the tangent cone \cite{Drton2007,Drton2009}. This is the semi-algebraic set that approximates the limiting behavior of the secant lines passing through the point of interest.
In our case, the singular points are nodes, i.e. not all the second derivatives are zero at each singular point (see \cite{hudson}). Hence, around each singular point $\bar{S}$ can be approximated by a quadratic cone. For example, for the point $ (\sqrt{d_{21}} : \sqrt{d_{31}} : \sqrt{d_{32}} : 0 )$ the tangent cone has equation
\begin{equation}\label{eq:tgcone}
d_{21}t_0^2+d_{31}t_1^2+d_{32}t_2^2-
\frac{2\Vert\mb{d_{21}}\wedge\mb{d_{31}}\Vert^2}{d_{21}d_{31}d_{32}}\,t_3^2-2c\sqrt{d_{21}d_{31}}\,t_0t_1+
2b\sqrt{d_{21}d_{32}}\,t_0t_2-2a\sqrt{d_{31}d_{32}}\,t_1t_2=0.
\end{equation}
\end{rem}
\begin{rem}\label{rm:tetra}
The general equation of a Kummer's surface is
\begin{equation} \label{eq:generalKum}
t_0^4 + t_1^4 + t_2^4 + t_3^4 -2 a (t_1^2 t_2^2 + t_0^2 t_3^2) +2 b (t_1^2 t_3^2 + t_0^2t_2^2) -2 c (t_2^2 t_3^2 + t_0^2 t_1^2) + 2 d t_0t_1t_2t_3 = 0,
\end{equation}
under the condition that
\begin{equation}\label{eq:paramgK}
8abc-4a^2-4b^2-4c^2+d^2+4=0.
\end{equation}
The surface $\bar{S}$ is obtained by setting $d=0.$ Geometrically, this corresponds to the fact that the nodes are in special position with respect to the planes of $ \mathbb{P}^3.$ In fact, there exist exactly $ 4 $ planes that contain $ 4 $ nodes each, no singular point on two of such planes. As it can be easily checked, such planes are exactly the coordinate planes. In this situation, we say that $ \bar{S} $ is a tetrahedroid (see Chapter 9 of \cite{hudson}.)
\end{rem}

\subsection{Geometric properties of $\text{Im}(\bs{\TT_3})$}\label{sec:geompropIm}

A subtle property of the nodes of a Kummer's surface is the existence of $ 16 $ planes, each containing $ 6 $ nodes, each of which is contained in $ 6 $ such planes. For this reason, the position of the $ 16 $ nodes is referred to as a $ 16_6 $ configuration. It is easy to obtain the equations of the planes, known in the literature as \emph{tropes}, from the coordinates of the nodes. In fact, each trope has coordinates in the dual projective space $ {\check{\mathbb P}}^3 $ that are equal to the coordinates of a node. This way, the tropes have equations
\begin{equation}\label{eq:tropes}
\begin{array}{l}
\pm t_0 \sqrt{d_{21}} \pm t_1 \sqrt{d_{31}} \pm t_2 \sqrt{d_{32}} = 0,\\
\pm t_0 \sqrt{d_{31}} \pm t_1 \sqrt{d_{21}} \pm t_3 \sqrt{d_{32}} = 0,\\
\pm t_0 \sqrt{d_{32}} \pm t_2 \sqrt{d_{21}} \pm t_3 \sqrt{d_{31}} = 0,\\
\pm t_1 \sqrt{d_{32}} \pm t_2 \sqrt{d_{31}} \pm t_3 \sqrt{d_{21}} = 0,
\end{array}
\end{equation}
where the sign of each summand can be chosen independently. Each trope is tangent to $\bar{S}$ along a conic curve, and the only conics contained in $\bar{S}$ are these $16$ ones. For future reference, we also observe that such conics are the smallest degree algebraic curves on $\bar{S},$ since a Kummer's surface contains no line.

Now we focus on the tropes having a role in the description of $\text{Im}(\bs{\TT_3}).$ To this aim, we describe some relevant loci in the $x$--plane (see Figure \ref{fig:rettecerchio}).
\begin{defn}\label{def:rettecerchio}
Let $r_{1},r_{2},r_{3}$ be the lines through two of the three receivers $\m{1},\m{2},\m{3}$, in compliance with the notation $\m{i} \notin r_{i}, \ i = 1,2,3$. Let us split each line in three as $r_{1} = r_{1}^{-} \cup r_{1}^{0} \cup r_{1}^{+}$, where $r_{1}^{0}$ is the segment with endpoints $\m{2}$ and $\m{3}$, $r_{1}^{-}$ is the half-line originating from $\m{3}$ and not containing $\m{2}$, and $r_{1}^{+}$ is the half-line originating from $\m{2}$ and not containing $\m{3}$. Similar splittings are done for lines $r_{2}$ and $r_{3}$, with $r_{2}^{+}$, $r_{3}^{+}$ having $\m{1}$ as endpoint.

Moreover, let $\Gamma$ be the unique circle through the points $\m{1},\m{2},\m{3}.$ We split $\Gamma$ in three arcs as $\Gamma=\Gamma_1\cup\Gamma_2\cup\Gamma_3,$ where $\Gamma_1$ has endpoints $\m{2},\m{3}$ and does not contain $\m{1}.$ Analogous conventions hold for the arcs $\Gamma_2$ and $\Gamma_3$.
\end{defn}
\begin{figure}[htb!]
\centering
 \includegraphics[width=7.5cm,bb=177 280 434 511]{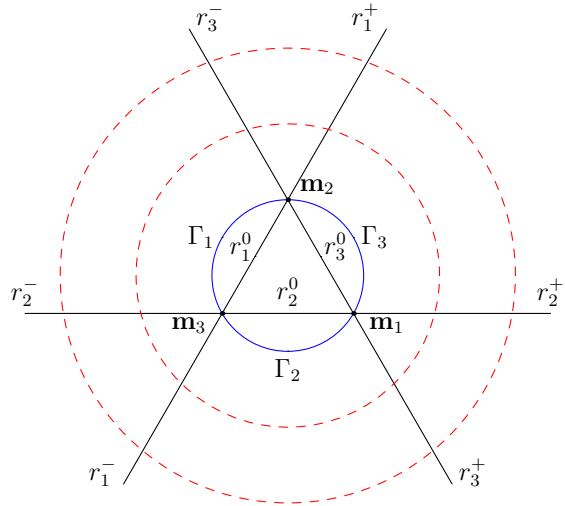}
 \caption{A general configuration of the receivers $\m{i}$, $i = 1,2,3$ and the subsets described in Definition \ref{def:rettecerchio}. E.g. the line $r_1$ does not contain $\m{1}$ and $r_1^0$ (resp. $\Gamma_1$) is the line segment (resp. arc of circle) from $\m{2}$ to $\m{3}.$}\label{fig:rettecerchio}
\end{figure}
\begin{prop}\label{th:retttecerchio}
The image of each locus considered in Definition \ref{def:rettecerchio} is an arc of conic. Moreover, $\text{Im}(\bs{\TT_3})$ does not contain any other arc of conic.
\end{prop}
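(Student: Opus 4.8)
The plan is to identify explicitly which of the $16$ tropes of $\bar S$ from \eqref{eq:tropes} meet the first octant $W$ in a curve that actually lies on $\text{Im}(\bs\TT_3)$, and to show these correspond bijectively to the loci $r_i^{\pm}, r_i^0, \Gamma_i$ of Definition \ref{def:rettecerchio}; the ``no other arc of conic'' part then follows from the fact, recalled after \eqref{eq:tropes}, that the only conics on $\bar S$ are the $16$ trope conics. First I would parametrize each locus and compute its image directly. For the line segment $r_1^0$ (between $\m 2$ and $\m 3$): a point $\x \in r_1^0$ satisfies $d_2(\x) + d_3(\x) = d_{32}$, which is one of the triangle-type degeneracies, and also $d_1(\x)$, $d_{21}$, $d_{31}$ are constrained by the collinearity $\x,\m 2,\m 3$. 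I expect that substituting such a relation into the equation of the relevant trope — here $\pm t_1\sqrt{d_{32}} \pm t_2\sqrt{d_{31}} \pm t_3\sqrt{d_{21}} = 0$, i.e. the one \emph{not} involving $t_0$ — is satisfied identically with the appropriate sign choice, because on $r_1^0$ one has $\mathcal T_2 + \mathcal T_3 = d_{32}$ together with a compatibility relation analogous to \eqref{eq:comp}. Since the locus is connected and lies in the closed octant, its image is a connected piece of the intersection of $\bar S$ with a trope, i.e. an arc of the trope conic. The half-lines $r_1^{\pm}$ correspond to the other sign choices in the same trope (where $\mathcal T_2 - \mathcal T_3 = \pm d_{32}$), and the circular arcs $\Gamma_i$ to one of the tropes containing $t_0$: on $\Gamma_i$ the four points $\m 1,\m 2,\m 3,\x$ are concyclic, and Ptolemy's theorem gives a linear relation among products of the $\mathcal T_i$'s and $d_{ji}$'s — but after the rescaling \eqref{eq:rescaledcoord} and passing to squares this should be exactly a trope equation evaluated with $t_0 = 1$. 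So the bookkeeping is: which of the $16$ tropes, with which sign pattern, hits each of the $9$ loci.

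For the converse direction I would argue as follows. Suppose $C \subset \text{Im}(\bs\TT_3)$ is an arc of a conic. Then $C$ spans a plane, and its Zariski closure in $\PP^3$ is a conic contained in $\bar S$; by the cited classification this conic is one of the $16$ trope conics, so $C$ lies in a trope $\Pi_\epsilon$. It remains to show that the part of each trope conic that meets the open first octant, when nonempty, is exactly (a subset of) the image of one of the $9$ loci — equivalently, that for $\x$ in the $x$-plane, $\bs\TT_3(\x)$ lying on a trope forces $\x$ to lie on one of $r_i, \Gamma$. This I would get by running the node/trope dictionary backwards: a trope equation is, after clearing the rescaling, a relation of the form $\pm\mathcal T_1\sqrt{d_{32}}\,(\text{etc.}) = 0$ or its $t_0$-analogue; squaring and using the law of cosines $d_i(\x)^2 = \cdots$ should reduce it to either a collinearity condition (a linear equation in $x,y$ vanishing) or the concyclicity condition (the circle $\Gamma$). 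The key structural input is that $\bar S$ is a \emph{tetrahedroid} (Remark \ref{rm:tetra}): the four coordinate planes each carry $4$ nodes, and the interplay between the octant structure of $W$ and the trope conics is governed by this special position.

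The main obstacle I anticipate is the careful sign/combinatorics bookkeeping: there are $16$ tropes, each with $4$ sign patterns, and I must match the $9$ connected loci $\{r_i^0, r_i^{\pm}, \Gamma_i\}$ to the correct trope-and-sign, check that no spurious trope conic passes through the interior of $W$, and verify that the \emph{arc} (not the whole conic) is swept. Concretely the hard computational nugget is showing that the concyclicity locus $\Gamma$ really does map into a trope: this requires turning Ptolemy's relation $d_1 d_{32} \pm d_2 d_{31} \pm d_3 d_{21} = 0$ (with signs depending on which arc) into the squared, rescaled form $\pm t_0\sqrt{d_{21}} \pm t_1\sqrt{d_{31}} \pm t_2\sqrt{d_{32}} = 0$ and confirming it is consistent with \eqref{eq:imm2} rather than an extra constraint — i.e. that $\Gamma$ maps \emph{onto} an arc of the trope conic and not merely into the trope plane. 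A secondary subtlety is the behavior at the boundary of $W$ and at the nodes \eqref{eq:puntisingolari}: the arcs' endpoints are images of the receivers $\m i$ and of the points where the loci meet, and one should check these endpoints are precisely the nodes lying on the relevant trope, which is consistent with the $16_6$ configuration but worth stating.
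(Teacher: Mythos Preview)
Your overall strategy matches the paper's: show each locus lands in a trope plane (hence its image is an arc of the trope conic), then use the classification of conics on $\bar S$ for the converse. Two corrections and one simplification are in order.

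First, you have the trope assignment inverted. The collinear loci $r_i^0, r_i^{\pm}$ satisfy relations like $\TT_2 \pm \TT_3 = d_{32}$, which after the rescaling \eqref{eq:rescaledcoord} become $t_0\sqrt{d_{32}} \pm t_2\sqrt{d_{21}} \pm t_3\sqrt{d_{31}} = 0$ --- tropes \emph{containing} $t_0$. Conversely, Ptolemy on $\Gamma_1$ gives $d_{32}\TT_1 = d_{31}\TT_2 + d_{21}\TT_3$, i.e.\ $t_1\sqrt{d_{32}} - t_2\sqrt{d_{31}} - t_3\sqrt{d_{21}} = 0$, a trope \emph{without} $t_0$. (Your Ptolemy idea is correct and in fact slightly cleaner than the paper's route, which derives the same linear relation via the law of sines and a trigonometric identity.) Also, there are $12$ connected loci in Definition~\ref{def:rettecerchio}, not $9$: three segments $r_i^0$, six half-lines $r_i^{\pm}$, and three arcs $\Gamma_i$, matching the $12$ tropes that meet the first octant.

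Second, for the converse the paper gives a much shorter argument than the one you sketch. Rather than pulling an arbitrary trope relation back to the $x$--plane and reducing it to a collinearity or concyclicity condition, observe that every trope conic passes through six nodes, and the only nodes of $\bar S$ lying in $\text{Im}(\bs\TT_3)$ are the three images $\bs\TT_3(\m{i})$. Exactly four of the sixteen tropes --- those obtained by choosing all plus signs in \eqref{eq:tropes} --- avoid all three of these nodes; but those four equations have all nonnegative coefficients, so their zero loci do not meet the open first octant $W$ at all. Hence the only trope conics intersecting $\text{Im}(\bs\TT_3)$ are the twelve already accounted for. This bypasses the backward computation you anticipate as the main obstacle.
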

\begin{proof}
We first prove that the image of each locus is contained in a trope, therefore it is an arc of conic. As there are three different kinds of loci, namely the half--lines $r_i^\pm$, the segments $r_i^0$ and the arcs $\Gamma_i,$ we consider only $r_1^+,\ r_1^0$ and $\Gamma_1,$ as the other cases are similar.

Let $\x\in r_1^+.$ Then $d_3(\x)-d_2(\x)=d_{32}$ and $d_1(\x)\geq d_{21}$ as can be easily seen from Figure \ref{fig:rettecerchio}. This means that $\bs{\TT_3}(\x)$ belongs to the plane $\TT_3-\TT_2=d_{32}.$ By applying the change of coordinates \eqref{eq:rescaledcoord}, we get the homogeneous equation $t_0\sqrt{d_{32}}+t_2\sqrt{d_{21}}-t_3\sqrt{d_{31}}=0,$ thus the plane is a trope. For completeness, the conic containing the image of $r_1^+$ is:
\begin{equation*}
\left\{ \begin{array}{l}
\TT_3 - \TT_2 = d_{32} \\
\TT_1^2 - \TT_3^2 + 2 a d_{31} \TT_3 - d_{31}^2 = 0
\end{array} \right.\ .
\end{equation*}
The arc $\bs{\TT_3}(r_1^+)$ is obtained for $\TT_2\geq 0$ and $\TT_1\geq d_{21}.$

Let $\x\in r_1^0.$ Then $d_3(\x)+d_2(\x)=d_{32}$ and $d_1(\x)\leq\max\{d_{21},d_{31}\}.$ It follows that $\bs{\TT_3}(\x)$ belongs to the plane $\TT_3+\TT_2=d_{32}.$ By applying again the previous change of coordinates, we obtain the homogeneous equation $t_0\sqrt{d_{32}}-t_2\sqrt{d_{21}}-t_3\sqrt{d_{31}}=0,$ thus the plane is a trope. The conic containing the image of $r_1^0$ is:
\begin{equation*}
\left\{ \begin{array}{l}
\TT_3 + \TT_2 = d_{32} \\
\TT_1^2 - \TT_2^2 - 2 b d_{21} \TT_2 - d_{21}^2 = 0
\end{array} \right.\ .
\end{equation*}
The arc $\bs{\TT_3}(r_1^0)$ is obtained for $\TT_1\geq 0$ and $d_{32}\geq\TT_2\geq 0.$

Let $\x\in\Gamma_1$ and let us set $\alpha=\widehat{\m{1}\m{3}\m{2}},$ $\beta=\widehat{\m{3}\m{2}r_3^-},$ $\gamma=\widehat{\m{3}\m{1}\m{2}},$ $\delta=\widehat{\m{3}\m{1}\x},$  $\epsilon=\widehat{\m{2}\m{1}\x}=\widehat{\m{2}\m{3}\x}$ (see Figure \ref{fig:angoli}).
\begin{figure}[htb!]
\centering
 \includegraphics[width=5cm,bb=217 296 394 495]{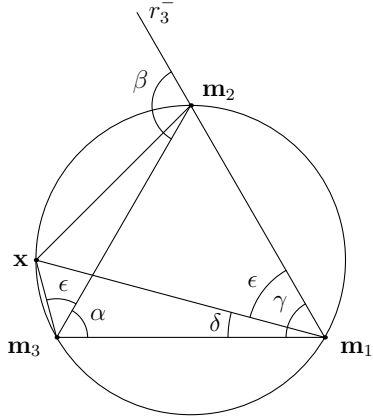}
 \caption{The angles involved in the proof of Proposition \ref{th:retttecerchio}.}\label{fig:angoli}
 \end{figure}
It is apparent that $\gamma=\delta+\epsilon,$ while $\beta=\alpha+\gamma$ because of the properties of triangles. Moreover, it is straightforward to check that
\begin{equation}\label{eq:trigid}
-\sin(\gamma)\sin(\alpha+\epsilon)+\sin(\beta)\sin(\epsilon)+
\sin(\alpha)\sin(\delta)=0
\end{equation}
is a trigonometric identity. Now, let $R$ be the radius of $\Gamma.$ For the law of sines, we have the following equalities:
$$
\frac{d_1(\x)}{\sin(\alpha+\epsilon)}=\frac{d_2(\x)}{\sin(\epsilon)}=\frac{d_3(\x)}{\sin(\delta)}=\frac{d_{32}}{\sin(\gamma)}=\frac{d_{31}}{\sin(\beta)}=\frac{d_{21}}{\sin(\alpha)}=2R.
$$
By substituting into identity \eqref{eq:trigid}, we obtain
$$
-d_{32}d_1(\x)+d_{31}d_2(\x)+d_{21}d_3(\x)=0.
$$
Hence, $\bs{\TT_3}(\x)$ belongs to the plane $-d_{32}\TT_1+d_{31}\TT_2+d_{21}\TT_3=0.$ This corresponds to the homogeneous equation $-t_1\sqrt{d_{32}}+t_2\sqrt{d_{31}}+t_3\sqrt{d_{21}}=0,$ therefore this plane is a trope as well. The conic containing the image of $\Gamma_1$ is:
\begin{equation*}
\left\{ \begin{array}{l}
-d_{32}\TT_1+d_{31}\TT_2+d_{21}\TT_3=0 \\
\TT_1^2 + \TT_2^2 - 2 a \TT_1 \TT_2 - d_{21}^2 = 0
\end{array} \right.\ .
\end{equation*}
The arc $\bs{\TT_3}(\Gamma_1)$ is obtained for $\TT_1\geq 0$ and $d_{32}\,\TT_1\geq d_{31}\,\TT_2\geq 0.$

For the second statement, we remind that the only singular points in $\text{Im}(\bs{\TT_3})$ are the images of $\m{1},\m{2},\m{3},$ whose homogeneous coordinates are
$ (\sqrt{d_{32}} : 0 : \sqrt{d_{21}} : \sqrt{d_{31}} ),
 (\sqrt{d_{31}} : \sqrt{d_{21}} : 0 : \sqrt{d_{32}}),
 (\sqrt{d_{21}} : \sqrt{d_{31}} : \sqrt{d_{32}} : 0 ).$
Among the $16$ tropes, only $4$ contain no one of such points, namely the planes obtained by choosing always the plus sign in equations \eqref{eq:tropes}. This implies that these tropes do not contain any point in the first octant of $\RR^3$ and so they cannot intersect $\text{Im}(\bs{\TT_3})$.
\end{proof}

\begin{figure}[htb!]
\centering
 \includegraphics[width=8cm,bb=168 261 443 530]{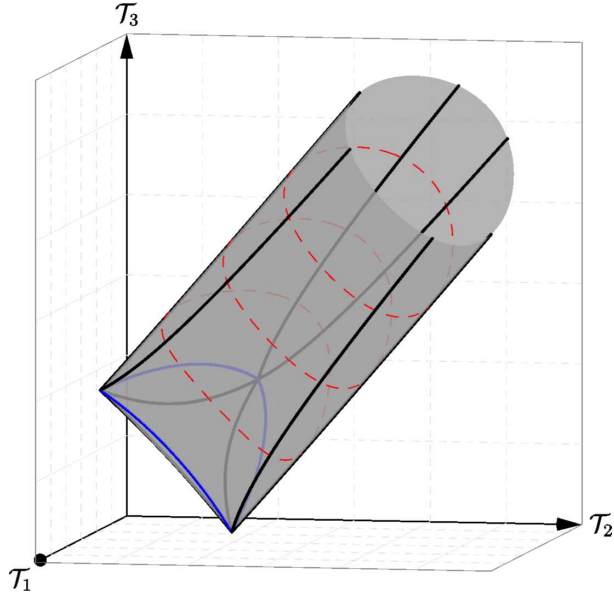}
 \caption{The TOA surface for the configuration of receivers in Figure \ref{fig:rettecerchio}. The blue arcs of ellipses are the images of $\Gamma_i, i=1,2,3,$ the black arcs of hyperbolas are the images of the lines $r_i,\ i=1,2,3$ and they meet at the singular points of the surface, which are the images of the receivers. The red curves, that are not planar, are the images of the red circles in Figure \ref{fig:rettecerchio} concentric to $\Gamma.$}\label{fig:TOAsurf}
 \end{figure}
\begin{rem}\label{rm:tropi}
We can classify the conics of Proposition \ref{th:retttecerchio} according to their preimages in the $x$--plane.

The projective closure of $\text{Im}(\bs{\TT_3})$ contains one ideal singular point having coordinates $(0:\sqrt{d_{32}}:\sqrt{d_{31}}:\sqrt{d_{21}}).$ The six tropes through this point cut on $\text{Im}(\bs{\TT_3})$ six unbounded arcs of hyperbolas, whose preimages are the half--lines $r_i^\pm,\ i=1,2,3.$ In the projective space, the arcs $\bs{\TT_3}(r_i^\pm)$ go from $\bs{\TT_3}(\m{i})$ to the ideal singular point. Their affine equations are:
\begin{equation}\label{eq:ip12}
\bs{\TT_3}(r_3^\mp):\ \left\{ \begin{array}{l}
\TT_1 - \TT_2 = \pm d_{21} \\
\TT_2^2 - \TT_3^2 \pm 2 b d_{32} \TT_2 + d_{32}^2 = 0
\end{array} \right.\ ,
\end{equation}
\begin{equation}\label{eq:ip34}
\bs{\TT_3}(r_2^\mp):\ \left\{ \begin{array}{l}
\TT_1 - \TT_3 = \pm d_{31} \\
-\TT_2^2 + \TT_3^2 \pm 2 a d_{32} \TT_3 + d_{32}^2 = 0
\end{array} \right.\ ,
\end{equation}
\begin{equation}\label{eq:ip56}
\bs{\TT_3}(r_1^\mp):\ \left\{ \begin{array}{l}
\TT_2 - \TT_3 = \pm d_{32} \\
-\TT_1^2 + \TT_3^2 \pm 2 a d_{31} \TT_3 + d_{31}^2 = 0
\end{array} \right.\ .
\end{equation}

The images of the three sides of the triangle in Figure \ref{fig:rettecerchio} are bounded arcs of hyperbolas. In particular, for each $i\neq j\neq k,$ the arc $\bs{\TT_3}(r_i^0)$ has ending points $\bs{\TT_3}(\m{j})$ and $\bs{\TT_3}(\m{k}).$ Their affine equations are:
\begin{equation}\label{eq:ip7}
\bs{\TT_3}(r_3^0):\ \left\{ \begin{array}{l}
\TT_1 + \TT_2 = d_{21} \\
\TT_1^2 - \TT_3^2 - 2 c d_{31} \TT_1 + d_{31}^2 = 0
\end{array} \right.\ ,
\end{equation}
\begin{equation}\label{eq:ip8}
\bs{\TT_3}(r_2^0):\ \left\{ \begin{array}{l}
\TT_1 + \TT_3 = d_{31} \\
\TT_1^2 - \TT_2^2 - 2 c d_{21} \TT_1 + d_{21}^2 = 0
\end{array} \right.\ ,
\end{equation}
\begin{equation}\label{eq:ip9}
\bs{\TT_3}(r_1^0):\ \left\{ \begin{array}{l}
\TT_2 + \TT_3 = d_{32} \\
\TT_1^2 - \TT_2^2 - 2 b d_{21} \TT_2 - d_{21}^2 = 0
\end{array} \right.\ .
\end{equation}

Finally, the images of the three arcs of $\Gamma$ are arcs of ellipses. As above, for each $i\neq j\neq k,$ the arc $\bs{\TT_3}(\Gamma_i)$ has ending points $\bs{\TT_3}(\m{j})$ and $\bs{\TT_3}(\m{k}).$ Their affine equations are:
\begin{equation}\label{eq:el1}
\bs{\TT_3}(\Gamma_3):\ \left\{ \begin{array}{l}
d_{32}\TT_1 + d_{31}\TT_2 - d_{21}\TT_3 = 0 \\
\TT_1^2 + \TT_2^2 + 2 a \TT_1\TT_2 - d_{21}^2 = 0
\end{array} \right.\ ,
\end{equation}
\begin{equation}\label{eq:el2}
\bs{\TT_3}(\Gamma_2):\ \left\{ \begin{array}{l}
d_{32}\TT_1 - d_{31}\TT_2 + d_{21}\TT_3 = 0 \\
\TT_1^2 + \TT_2^2 - 2 a \TT_1\TT_2 - d_{21}^2 = 0
\end{array} \right.\ ,
\end{equation}
\begin{equation}\label{eq:el3}
\bs{\TT_3}(\Gamma_1):\ \left\{ \begin{array}{l}
-d_{32}\TT_1 + d_{31}\TT_2 + d_{21}\TT_3 = 0 \\
\TT_1^2 + \TT_2^2 - 2 a \TT_1\TT_2 - d_{21}^2 = 0
\end{array} \right.\ .
\end{equation}
\end{rem}
\begin{rem}
From a differential point of view, it is well known that the $16$ conics contained in Kummer's surfaces are asymptotic curves and their union is the locus where the Gaussian curvature vanishes. The knowledge of the Gaussian curvature is not only important for the local description of the shape of the surface but it is strongly related to the properties of the TOA statistical model (see \cite{Amari1982,Cheng2013}).

The formula of the Gaussian curvature $K(\x)$ of $\text{Im}(\bs{\TT_3}),$ computed using the parametrization $\bs\TT=\bs{\TT_3(\x)}$ is
$$
K(\x)=\frac{h_{1}(\x)h_{2}(\x)h_{3}(\x)
(d_1(\x)^2h_{1}(\x)+d_2(\x)^2h_{2}(\x)+d_3(\x)^2h_{3}(\x))}
{(d_1(\x)^2h_{1}(\x)^2+d_2(\x)^2h_{2}(\x)^2+d_3(\x)^2h_{3}(\x)^2)^2}\ ,
$$
where
$$
h_{1}(\x)=\ast(\mb{d_2}(\x)\wedge\mb{d_3}(\x)),\qquad
h_{2}(\x)=\ast(\mb{d_3}(\x)\wedge\mb{d_1}(\x)),\qquad
h_{3}(\x)=\ast(\mb{d_1}(\x)\wedge\mb{d_2}(\x)).
$$
With straightforward computations, it is possible to verify that $h_i(\x)=0$ is the defining equation of $r_i,\ i=1,2,3,$ while $d_1(\x)^2h_{1}(\x)+d_2(\x)^2h_{2}(\x)+d_3(\x)^2h_{3}(\x)=0$ is the defining equation of $\Gamma.$ This confirms that the $12$ arcs of conics of Proposition \ref{th:retttecerchio} are the only curves in $\text{Im}(\bs{\TT_3})$ where the Gaussian curvature vanishes. It is easy to check that $K(\x)>0$ at each $\x$ inside the triangle of the sensors, and its sign changes every time $\x$ crosses $r_i,\ i=1,2,3,$ or $\Gamma.$ $K(\x)$ is not defined only at the receivers and it assumes every real value in any neighborhood of $\m{i},\ i=1,2,3.$

When $\x$ is far from the sensors, we can assume $d_1(\x)=d_2(\x)=d_3(\x),$ therefore $K(\x)$ goes to zero as $d_1(\x)^{-2}.$ Hence, far from the singular points $\text{Im}(\bs{\TT_3})$ is well approximated by a cylinder. We further discuss this topic, in particular its connection to Maximum Likelihood Estimation, in Section \ref{sec:conclusion}.
\end{rem}

In Theorem \ref{th:1oct} we proved that $\text{Im}(\bs{\TT_3})$ is contained in the first octant $W$ of the $\TT$--space. Thanks to Proposition \ref{th:retttecerchio}, we can now show that the tropes in Remark \ref{rm:tropi} define a smaller region containing the image, that we represent in Figure \ref{fig:Q3}.
\begin{defn}\label{def:Q3}
In the $\TT$--space, let us take the region
\begin{equation}\label{eq:ineq12}
Q_3:\,\left\{
\begin{array}{l}
-d_{ji}\leq\TT_i-\TT_j\leq d_{ji}\\
\TT_i+\TT_j\geq d_{ji}\\
d_{32}\TT_1 + d_{31}\TT_2 - d_{21}\TT_3\geq0\\
d_{32}\TT_1 - d_{31}\TT_2 + d_{21}\TT_3\geq0\\
-d_{32}\TT_1 + d_{31}\TT_2 + d_{21}\TT_3\geq0
\end{array}
\right.\ ,
\end{equation}
for $1\leq i<j\leq 3.$
\end{defn}
\begin{thm}\label{th:Q3}
$Q_3$ is a convex polyhedron contained in $W$. It has $12$ facets and it is unbounded along the direction given by vector $(1,1,1).$ Furthermore, $\text{Im}(\bs{\TT_3})\subset Q_3.$
\end{thm}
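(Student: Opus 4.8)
The plan is to establish the four assertions separately: that $Q_3$ is a convex polyhedron in $W$ is essentially formal; unboundedness along $(1,1,1)$ comes from the recession cone; the inclusion $\text{Im}(\bs{\TT_3})\subset Q_3$ reduces to classical metric inequalities; and the facet count rests on Proposition~\ref{th:retttecerchio} and Remark~\ref{rm:tropi}.

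First I would note that, by \eqref{eq:ineq12}, $Q_3$ is an intersection of $12$ closed half--spaces, hence automatically a closed convex polyhedron, whose bounding hyperplanes are $\TT_i-\TT_j=\pm d_{ji}$ and $\TT_i+\TT_j=d_{ji}$ for $1\le i<j\le 3$, together with the three planes $\pm d_{32}\TT_1\pm d_{31}\TT_2\pm d_{21}\TT_3=0$ having exactly one minus sign --- i.e. precisely the $12$ tropes singled out in Remark~\ref{rm:tropi}. Adding, for a fixed pair $i<j$, the inequality $\TT_i+\TT_j\ge d_{ji}$ to $\TT_i-\TT_j\ge -d_{ji}$ gives $\TT_i\ge 0$, and to $\TT_j-\TT_i\ge -d_{ji}$ gives $\TT_j\ge 0$; running over the three pairs yields $Q_3\subseteq W$. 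Moreover $Q_3$ is full--dimensional: if $\x$ is interior to the triangle $\m{1}\m{2}\m{3}$ then $d_i(\x)>0$, no triangle $\x\m{i}\m{j}$ is degenerate, and $\x\notin\Gamma$, so by the triangle and Ptolemy inequalities recalled below all $12$ defining inequalities are \emph{strict} at $\bs{\TT_3}(\x)$, which is therefore an interior point of $Q_3$.

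Next I would compute the recession cone $\mathrm{rec}(Q_3)$, the set of $v\in\RR^3$ satisfying the homogenised inequalities. Homogenising $-d_{ji}\le\TT_i-\TT_j\le d_{ji}$ forces $v_i=v_j$ for all $i,j$, so $v\in\RR\,(1,1,1)$; writing $v=t\,(1,1,1)$, the homogenised inequality $\TT_i+\TT_j\ge d_{ji}$ becomes $2t\ge 0$, and each homogenised trope inequality becomes $(d_{ki}+d_{ji}-d_{kj})\,t\ge 0$, which is consistent since $d_{ki}+d_{ji}-d_{kj}>0$ for non--collinear receivers. Hence $\mathrm{rec}(Q_3)=\RR_{\ge 0}(1,1,1)$, so $Q_3$ is pointed and unbounded exactly in the direction $(1,1,1)$. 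For the inclusion $\text{Im}(\bs{\TT_3})\subset Q_3$ one then simply checks the $12$ inequalities on $\bs{\TT_3}(\x)=(d_1(\x),d_2(\x),d_3(\x))$: the bounds $|d_i(\x)-d_j(\x)|\le d_{ji}$ and $d_i(\x)+d_j(\x)\ge d_{ji}$ are the reverse and ordinary triangle inequalities for $\x,\m{i},\m{j}$, while each of the three remaining inequalities, e.g. $-d_{32}d_1(\x)+d_{31}d_2(\x)+d_{21}d_3(\x)\ge 0$, is exactly Ptolemy's inequality for the four coplanar points $\m{1},\m{2},\m{3},\x$ in the appropriate cyclic order. (Alternatively these three follow from the tangency of the relevant tropes to $\bar S$ along conics plus connectedness of $\text{Im}(\bs{\TT_3})$, but Ptolemy is shorter.)

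It remains to show $Q_3$ has exactly $12$ facets: it has at most $12$ since it is cut out by $12$ inequalities, so I would prove each bounding hyperplane $H$ is the \emph{only} active constraint at some point of $Q_3$. For this, pick $\x$ in the relative interior of the locus among $r_i^\pm$, $r_i^0$, $\Gamma_i$ whose $\bs{\TT_3}$--image lies on $H$ (the matching is read off from Remark~\ref{rm:tropi}). By Proposition~\ref{th:retttecerchio}, $\bs{\TT_3}(\x)\in H$; and since such an $\x$ avoids the receivers $\m{1},\m{2},\m{3}$, the lines $r_1,r_2,r_3$ and the circle $\Gamma$ --- each of which meets the chosen locus only at its endpoints --- no triangle $\x\m{i}\m{j}$ degenerates and $\x$ is not concyclic with the receivers, so the other $11$ inequalities hold strictly at $\bs{\TT_3}(\x)$. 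As $Q_3$ is full--dimensional, $\bs{\TT_3}(\x)$ then lies in the relative interior of a facet, so $H$ is facet--defining; in particular the arcs $\bs{\TT_3}(r_i^\pm)$, $\bs{\TT_3}(r_i^0)$, $\bs{\TT_3}(\Gamma_i)$ lie on $\partial Q_3$. I expect this last step to cost the most care: one must verify, locus by locus, that the relative interior of each of the twelve curves $r_i^\pm,r_i^0,\Gamma_i$ is disjoint from $\{\m{1},\m{2},\m{3}\}\cup r_1\cup r_2\cup r_3\cup\Gamma$ (so that exactly one inequality becomes tight there) and to match the equality obtained for each locus in the proof of Proposition~\ref{th:retttecerchio} with the right defining inequality of $Q_3$; everything else is routine once the triangle and Ptolemy inequalities are invoked.
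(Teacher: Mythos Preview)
Your proof is correct and follows the same overall structure as the paper's, but with one genuinely different step worth noting. For the three inequalities $\pm d_{32}\TT_1\pm d_{31}\TT_2\pm d_{21}\TT_3\ge 0$ you invoke Ptolemy's inequality directly, whereas the paper argues by continuity: the function $f(\x)=d_{32}d_1(\x)+d_{31}d_2(\x)-d_{21}d_3(\x)$ vanishes only on the arc $\Gamma_3$, which does not disconnect the $x$--plane, and $f(\m{3})=2d_{31}d_{32}>0$, so $f\ge 0$ everywhere. Your route is shorter and more conceptual, and as a bonus Ptolemy's equality case (concyclic points in the prescribed order) independently recovers the content of Proposition~\ref{th:retttecerchio} for the arcs $\Gamma_i$. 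You also compute the full recession cone $\mathrm{rec}(Q_3)=\RR_{\ge 0}(1,1,1)$ rather than merely exhibiting the ray $\{(\TT,\TT,\TT):\TT\ge\tfrac{1}{2}\min d_{ji}\}$ as the paper does; this gives slightly more, namely that $(1,1,1)$ is the \emph{only} unbounded direction.

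One small wording issue in your facet--count argument: when $\x$ lies in the relative interior of, say, $r_1^+$, the assertion ``no triangle $\x\m{i}\m{j}$ degenerates'' is literally false, since $\x,\m{2},\m{3}$ are collinear. What actually happens is that among the three inequalities attached to the pair $(2,3)$ only $\TT_3-\TT_2\le d_{32}$ becomes tight (the other two remain strict because $\x\neq\m{2},\m{3}$), while the six inequalities for the pairs $(1,2),(1,3)$ and the three Ptolemy inequalities are strict because $\x\notin r_2\cup r_3\cup\Gamma$. The paper glosses over the same verification, simply asserting that every point on a tangent conic satisfies exactly one defining equality; your argument is no less complete once the phrasing is tightened.
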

\begin{proof}
$Q_3$ is the intersection of $12$ half--spaces, therefore it is a convex polyhedron. If we project $Q_3$ on the plane $\TT_3=0,$ we get a subset of the polyhedron $Q_2$ of Theorem \ref{prop:caso2mic} and so $\TT_1,\TT_2\geq 0.$ The same happens if we project on the other coordinate planes, therefore we have $Q_3\subset W.$

From Proposition \ref{th:retttecerchio} follows that none of the inequalities \eqref{eq:ineq12} is redundant, hence $Q_3$ has $12$ facets. Indeed, every point on the tangent conics to $\text{Im}(\bs{\TT_3})$ satisfies only one in \eqref{eq:ineq12} as an equality and all the others as strict inequalities. Moreover, it is easy to check that points $\bs{\TT}=(\TT,\TT,\TT)\in Q_3$ for $\TT\geq \frac{1}{2}\min(d_{21},d_{31},d_{32}),$ where the last three inequalities in \eqref{eq:ineq12} follow from the triangular inequalities associated to the triplet of sensors .
For this last statement we observe that a source $ \x $ and two receivers $ \mathbf{m_i}, \mathbf{m_j},$ with $ 1 \leq i < j \leq 3,$ form a triangle, therefore any point $\bs\TT=(\TT_1,\TT_2,\TT_3)\in\text{Im}(\bs{\TT_3})$ satisfies the first $9$ inequalities defining $Q_3:$
\begin{equation} \label{q3-cond}
\vert \TT_i - \TT_j \vert \leq d_{ji} \leq \TT_i + \TT_j, \quad (i,j) = (1,2), (1,3), (2,3).
\end{equation}
Let us now consider the inequality $d_{32}\TT_1 + d_{31}\TT_2 - d_{21}\TT_3\geq0,$ as the remaining two are similar. We have that $\bs{\TT_3}(\x)$ belongs to the trope $d_{32}\TT_1 + d_{31}\TT_2 - d_{21}\TT_3=0$ if, and only if, $\x\in\Gamma_3.$ Since $\Gamma_3$ does not disconnect the $x$--plane and the function $f(\x)=d_{32}\TT_1(\x) + d_{31}\TT_2(\x) - d_{21}\TT_3(\x)$ is continuous, the sign of $f(\x)$ is constant on $\RR^2\setminus\Gamma_3.$ Moreover $f(\m{3})=2d_{31}d_{32}\geq 0$, therefore $d_{32}\TT_1 + d_{31}\TT_2 - d_{21}\TT_3\geq0$ for every points in $\text{Im}(\bs{\TT_3})$. This completes the proof that $\text{Im}(\bs{\TT_3})\subset Q_3.$
\end{proof}
\begin{figure}[htb!]
\centering
 \includegraphics[width=8cm,bb=168 261 443 530]{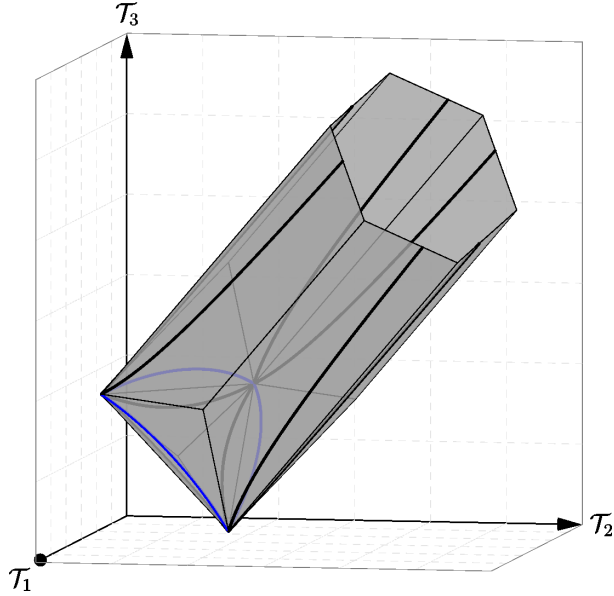}
 \caption{The polyhedron $Q_3$ containing the TOA surface in Figure \ref{fig:TOAsurf}. The bold lines are the asymptotic curves of $\text{Im}(\bs{\TT_3})$. Each facet of the polyhedron $Q_3$ is tangent to $\text{Im}(\bs{\TT_3})$ along one of these curves.}\label{fig:Q3}
 \end{figure}

As observed in the proof of the last Theorem, the first $9$ inequalities defining $Q_3$ are nothing more that the triangular inequalities involving the sensors and the source. Quite surprisingly, we have $3$ other linear inequalities related to $\Gamma_i,\ i=1,2,3$. This added information can turn out to be helpful in various applications.
For example, many algorithms designed for removing the TOA outliers are based on the detection of the TOA measurements not respecting the triangular inequalities \cite{Jian2010}. Therefore, the new inequalities in \eqref{eq:ineq12} could add a boost in the resulting performance.

The polyhedron $Q_3$ is strictly related to the convex hull $\mathcal{E}$ of $\text{Im}(\bs{\TT_3})$, i.e. the smallest convex set in the $\TT$--space that contains $\text{Im}(\bs{\TT_3})$. Recent studies showed the importance of the knowledge of the convex hall of a real semialgebraic variety $X$ for the solution of optimization problems over $X$ (see \cite{Blekherman2012} for an overview on the subject). In the context of localization problems, this could prove crucial for the study of MLE algorithms. We depict the boundary of $\mathcal{E}$ in Figure \ref{fig:CH}.
\begin{figure}[htb!]
\centering
 \includegraphics[width=8cm,bb=168 261 443 530]{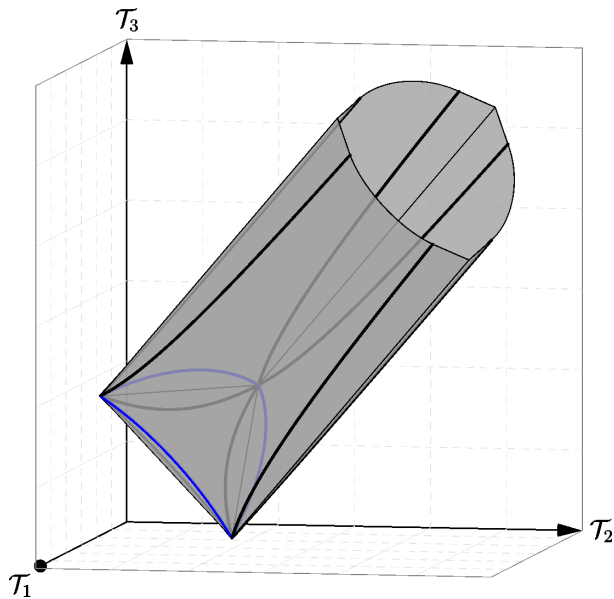}
 \caption{The grey surface is the boundary $\partial\mathcal{E}$ of the convex hull of $\text{Im}(\bs{\TT_3})$. It is the union of $12$ subsets of the facets of $Q_3$ and the $4$ regions of $\text{Im}(\bs{\TT_3})$ having positive Gaussian curvature. They meet smoothly along the asymptotic curves of $\text{Im}(\bs{\TT_3})$, that are depicted in bold.}\label{fig:CH}
 \end{figure}

In order to simplify the description of $\partial\mathcal{E},$ let us introduce the following notation:
\begin{itemize}[leftmargin=*]
\item
$\mathcal{V}_0,\mathcal{V}_1,\mathcal{V}_2,\mathcal{V}_3$ are the $4$ regions in $\text{Im}(\bs{\TT_3})$ where the Gaussian curvature is non negative;
\item
$\mathcal{F}_{ijk}$ is the subset of the facet of $Q_3$ whose boundary is given by the union of $\bs{\TT_3}(\Gamma_i)$ and the edge of $Q_3$ with endpoints $\bs{\TT_3}(\m{j}),\bs{\TT_3}(\m{k}),$ where $(i,j,k)=(1,2,3),(2,1,3),(3,1,2);$
\item
$\mathcal{G}_{ijk}$ is the subset of the facet of $Q_3$ whose boundary is given by the union of $\bs{\TT_3}(r_i^0)$ and the edge of $Q_3$ with endpoints $\bs{\TT_3}(\m{j}),\bs{\TT_3}(\m{k}),$ where $(i,j,k)=(1,2,3),(2,1,3),(3,1,2);$
\item
$\mathcal{L}_i^+$ (resp. $\mathcal{L}_i^-$) is the unbounded subset of the facet of $Q_3$ having boundary given by the union of $\bs{\TT_3}(r_i^+)$ (resp. $\bs{\TT_3}(r_i^-)$) and the unbounded edge of $Q_3$ with endpoint $\bs{\TT_3}(\partial r_i^+)$ (resp. $\bs{\TT_3}(\partial r_i^-)$), where $i=1,2,3.$
\end{itemize}
\begin{thm}\label{th:CH}
The boundary of $\mathcal{E}$ is
$$
\partial\mathcal{E}=\bigcup_{i=0}^{3}\mathcal{V}_i
\cup\mathcal{F}_{1,2,3}\cup\dots\cup\mathcal{G}_{3,1,2}\cup
\left(\bigcup_{i=1}^{3}(\mathcal{L}_i^+\cup\mathcal{L}_i^-)\right).
$$
The convex hull $\mathcal{E}$ is the domain contained into $\partial\mathcal{E}$ minus the three unbounded edges of $Q_3\cap\partial\mathcal{E}.$
\end{thm}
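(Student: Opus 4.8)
The plan is to show that the closure $\overline{\mathcal E}$ of $\mathcal E=\mathrm{conv}(\text{Im}(\bs{\TT_3}))$ equals the closed region $\mathcal E'$ bounded by
\[
\Sigma:=\Big(\bigcup_{i=0}^{3}\mathcal V_i\Big)\cup\mathcal F_{1,2,3}\cup\cdots\cup\mathcal G_{3,1,2}\cup\Big(\bigcup_{i=1}^{3}(\mathcal L_i^+\cup\mathcal L_i^-)\Big),
\]
and then to account for the difference between $\mathcal E$ and $\overline{\mathcal E}$. Two inputs are used throughout: $\text{Im}(\bs{\TT_3})\subset Q_3$, together with the fact (Theorem~\ref{th:Q3}, Remark~\ref{rm:tropi}) that each facet of $Q_3$ is tangent to $\text{Im}(\bs{\TT_3})$ along exactly one of the twelve conic arcs of Proposition~\ref{th:retttecerchio}; and that $\bs{\TT_3}$ is proper and injective, so $\text{Im}(\bs{\TT_3})$ is closed and the twelve conic arcs are pairwise disjoint away from the images of the receivers.

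First I would determine the planar faces of $\overline{\mathcal E}$. Since $\text{Im}(\bs{\TT_3})\subset Q_3$, each facet $F$ of $Q_3$ supports $\overline{\mathcal E}$, and $\overline{\mathcal E}\cap F=\overline{\mathrm{conv}(\text{Im}(\bs{\TT_3})\cap F)}$ is the $2$--dimensional convex hull, inside the plane of $F$, of the unique conic arc contained in $F$. Evaluating this hull in the three cases — the region between the arc of ellipse $\bs{\TT_3}(\Gamma_i)$ and its chord, between the bounded arc of hyperbola $\bs{\TT_3}(r_i^0)$ and its chord, and between the unbounded arc of hyperbola $\bs{\TT_3}(r_i^\pm)$ and its asymptotic half-line — and identifying the chord (resp. half-line) with an edge of $Q_3$ by exhibiting a second facet of $Q_3$ through its endpoints (the trope of $r_i^0$ for $\bs{\TT_3}(\Gamma_i)$, the trope of $\Gamma_i$ for $\bs{\TT_3}(r_i^0)$, the unbounded edge in direction $(1,1,1)$ for $\bs{\TT_3}(r_i^\pm)$) yields exactly $\mathcal F_{ijk}$, $\mathcal G_{ijk}$ and $\mathcal L_i^\pm$. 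In particular each $\mathcal F_{ijk}$ and $\mathcal G_{ijk}$ is the convex hull of a compact arc of $\text{Im}(\bs{\TT_3})$, hence lies in $\mathcal E$, while each $\mathcal L_i^\pm$ is the convex hull of an unbounded arc, equal to $\mathcal L_i^\pm$ with the relative interior of its unbounded edge removed, so $\mathcal L_i^\pm\subseteq\overline{\mathcal E}$.

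Next I would handle the curved locus. Each cap $\mathcal V_i$ lies in $\text{Im}(\bs{\TT_3})\subset\overline{\mathcal E}$, and along every one of the twelve conic arcs the cap and the adjacent planar face share the same tangent plane, because by Theorem~\ref{th:Q3} the corresponding facet of $Q_3$ is tangent to $\text{Im}(\bs{\TT_3})$ there. Thus $\Sigma$ is a complete, connected, locally convex $C^1$ surface: its second fundamental form vanishes on the planar faces, is positive semidefinite on the caps (by the definition of the $\mathcal V_i$ and the Gaussian--curvature formula of the Remark preceding Definition~\ref{def:Q3}) and is positive definite somewhere, namely on $\mathring{\mathcal V}_i$ where $K>0$; and its recession cone is the single ray $\RR_{\ge0}(1,1,1)$ inherited from $Q_3$. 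A standard local-to-global convexity argument then shows that $\mathcal E'$ is convex, so $\mathcal E'=\overline{\mathrm{conv}(\Sigma)}\subseteq\overline{\mathcal E}$. For the reverse inclusion one checks $\text{Im}(\bs{\TT_3})\subseteq\mathcal E'$: the six regions of the $x$--plane on which the Gaussian curvature is negative are bounded only by preimages of the twelve conic arcs, they are mapped into $Q_3\supseteq\mathcal E'$, and by injectivity of $\bs{\TT_3}$ their image sheets meet $\Sigma$ only along those arcs, lying on the interior side of the adjacent facet of $Q_3$ near each arc; connectedness then keeps each sheet inside $\mathcal E'$. Hence $\overline{\mathcal E}\subseteq\mathcal E'$, so $\overline{\mathcal E}=\mathcal E'$ and $\partial\mathcal E=\partial\overline{\mathcal E}=\Sigma$.

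Finally, comparing $\mathcal E$ with $\overline{\mathcal E}=\mathcal E'$: every point of $\Sigma$ lies in $\mathcal E=\mathrm{conv}(\text{Im}(\bs{\TT_3}))$ except the relative interiors of the three unbounded edges of $Q_3$ contained in $\Sigma$ (those in direction $(1,1,1)$ issuing from the points $\bs{\TT_3}(\m{i})$), which are asymptotes of the strictly convex arcs $\bs{\TT_3}(r_i^\pm)$ and therefore belong to $\overline{\mathcal E}\setminus\mathcal E$; removing them gives the stated description of $\mathcal E$. The main obstacle is the convexity of $\mathcal E'$ — promoting the pointwise inequality $K\ge 0$ on the caps to the global statement that $\Sigma$ bounds a convex body; the $C^1$ matching of caps and facets along the conic arcs furnished by Theorem~\ref{th:Q3}, together with the control of $\Sigma$ at infinity, is precisely what legitimizes the local-to-global step. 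The remaining work is bookkeeping with the explicit conics of Remark~\ref{rm:tropi} and the facet--edge structure of $Q_3$.
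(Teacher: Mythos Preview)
Your argument follows the same two--step strategy as the paper: (i) show that the candidate region bounded by $\Sigma$ is convex, and (ii) show that it coincides with the convex hull of $\text{Im}(\bs{\TT_3})$, finishing by removing the three unbounded edges. The paper's proof is considerably more informal: convexity is asserted as ``a direct consequence of the convexity of $Q_3$ and the positive Gaussian curvature of $\mathcal V_i$,'' and minimality is checked by exhibiting each planar piece as swept out by segments from a singular point $\bs{\TT_3}(\m{j})$ to the adjacent conic arc (e.g.\ $\mathcal F_{123}=\bigcup_q s_{pq}$ with $p=\bs{\TT_3}(\m{2})$ and $q\in\bs{\TT_3}(\Gamma_1)$); the inclusion $\text{Im}(\bs{\TT_3})\subseteq\mathcal E'$ is simply declared obvious.

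Your version spells out the $C^1$ matching of caps and facets along the twelve conic arcs and invokes a local--to--global convexity principle, computes each planar face as the $2$--dimensional hull $\overline{\mathcal E}\cap F$ of its tangency arc, and argues separately (via connectedness) that the negative--curvature sheets of $\text{Im}(\bs{\TT_3})$ remain inside $\mathcal E'$. This is more careful than the paper on precisely the two points the paper leaves to the reader --- the global convexity step and the distinction between $\mathcal E$ and $\overline{\mathcal E}$ --- but the underlying idea is the same.
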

\begin{proof}
By definition, $\mathcal{E}$ is a convex set if for any two points $p,q\in\mathcal{E}$ the segment $s_{pq}$ joining them is contained in $\mathcal{E}$. Clearly, it is sufficient to check this property for $p,q\in\partial\mathcal{E}.$ But this is a direct consequence of the convexity of $Q_3$ and the positive Gaussian curvature of $\mathcal{V}_i,\ i=0,\dots,3.$

We now need to show that, among the convex sets of the $\TT$--space, $\mathcal{E}$ is the smallest one containing $\text{Im}(\bs{\TT_3}).$ First of all, it is obvious that $\text{Im}(\bs{\TT_3})\subset\mathcal{E}.$ Then, using the previous notation, we have to prove that for any $\tilde{p}\in\mathcal{E}$ there exist $p,q\in\text{Im}(\bs{\TT_3})$ such that $\tilde{p}\in s_{pq}.$ As above, we focus on $\tilde{p}\in\partial\mathcal{E}.$ The statement is trivially true for every $\tilde{p}\in\mathcal{V}_i,\ i=0,\dots,3.$ Moreover, we have $\mathcal{F}_{123}=\bigcup_{q}s_{pq}$ where $p=\bs{\TT_3}(\m{2})$ and $q\in\bs{\TT_3}(\Gamma_1),$ and similar relations hold true for the other components of $\partial\mathcal{E}\cap\partial Q_3.$ Finally, the unbounded edges of $Q_3\cap\partial\mathcal{E}$ do not belong to $\mathcal{E},$ indeed they connect the singular points of $\text{Im}(\bs{\TT_3})$ to the ideal singular point $(0:1:1:1)$ of $\bar{S},$ that is outside the image of $\bs{\TT_3}$.
\end{proof}
\noindent Theorem \ref{th:CH} accurately describes the set $\mathcal{E}.$ However, we notice that the convex polyhedron $Q_3$ represents an approximation of the convex hull that is a great deal simpler, as it is defined by linear inequalities only.

\subsection{The parameter space of the range surfaces}\label{sec:Cayley}
It is well known that the accuracy of range--based localization depends on the configuration of the sensors. In our geometrical perspective, by moving the sensors on the $x$--plane, we change the shape of $\text{Im}(\bs{\TT_3}).$
Let us recall equation \eqref{Kum-proj} defining the Kummer's surface $\bar{S}:$
\begin{equation*}
t_0^4 + t_1^4 + t_2^4 + t_3^4 -2 a (t_1^2 t_2^2 + t_0^2 t_3^2) +2 b (t_1^2 t_3^2 + t_0^2t_2^2) -2 c (t_2^2 t_3^2 + t_0^2 t_1^2) = 0.
\end{equation*}
$\bar{S}$ depends on the parameters $ a = \mb{\tilde{d}_{32}} \cdot \mb{\tilde{d}_{31}}, b = \mb{\tilde{d}_{32}} \cdot \mb{\tilde{d}_{21}}, c = \mb{\tilde{d}_{31}} \cdot \mb{\tilde{d}_{21}}.$ These are scalar products of unit vectors, therefore they are functions of the angles $ \alpha, \beta, \gamma $ between the receivers (see Figure \ref{fig:parametri-Kummer}).
\begin{figure}[htb]
\centering
  \includegraphics[width=4cm,bb=234 345 377 446]{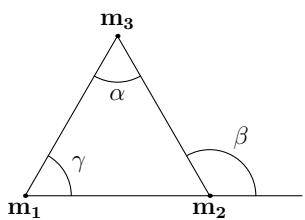}\\
  \caption{The angles $ \alpha $ and $ \gamma $ are inside the triangle of the receivers, 
                while $ \beta $ is exterior to the same triangle.}\label{fig:parametri-Kummer}
\end{figure}

Now we study more in depth the relation between such angles and $\text{Im}(\bs{\TT_3}).$ From the elementary properties of triangles follows that $ \beta = \alpha + \gamma,$ therefore $ a,b,c $ are not independent. In fact, by using some trigonometric identities, we obtain
\begin{equation*} \begin{split} b = & \cos(\beta) = \cos(\alpha) \cos(\gamma) - \sqrt{(1-\cos^2(\alpha))(1-\cos^2(\gamma))} \\ & = ac - \sqrt{(1-a^2)(1-c^2)}. \end{split} \end{equation*}
In the previous computation we used the fact that $ 0 \leq \alpha, \beta, \gamma \leq \pi.$ We therefore have
\begin{equation} \label{Cayley-sur}
2abc -a^2-b^2-c^2 + 1 = 0.
\end{equation}
Notice that, after setting the parameter $d=0,$ this equation coincides with \eqref{eq:paramgK}, and for $a,b,c\in\CC$ it defines the parameter space of a general tetrahedroid. On the other hand, we are interested in describing the parameter space of the range surfaces, which constrains $a,b,c\in\RR$, more precisely $-1\leq a,b,c\leq 1.$

The zero locus $S'$ of \eqref{Cayley-sur} is the Cayley's surface, i.e. the unique cubic surface with four isolated nodes.\footnote{For a picture of the surface, see for example \href{en.wikipedia.org/wiki/Cayley's_nodal_cubic_surface}{Wikipedia.}}
For equation \eqref{Cayley-sur}, the nodes have coordinates $(1,1,1),(1,-1,-1),(-1,1,-1),(-1,-1,1).$ These points are the vertices of a tetrahedron $T$ whose edges are contained in $S'$, and so the Cayley's surface $S'$ contains $6$ lines. Moreover, as it can be seen from its picture, the real part of $S'$ contains a topological deformation of the faces of $T$.
\begin{thm} \label{par-TOA-surf}
The parameter space of the range surfaces is $$ \mathcal{P} = \{ (a,c) \in (-1,1) \times (-1,1) \ \vert \ a+c > 0 \} \times \RR^+,$$ up to rototranslations and reflections of the sensors triangle.
\end{thm}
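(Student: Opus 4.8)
The plan is to identify the parameter space with the set of admissible receiver triangles modulo isometries, and then to coordinatize the latter by the two interior--angle cosines together with a scale. First I would recall that a labeled configuration $(\m{1},\m{2},\m{3})$ of three non--collinear receivers is determined, up to rototranslations and reflections of the $x$--plane, exactly by the three pairwise distances $d_{21},d_{31},d_{32}$ (the SSS criterion), and that these distances range precisely over the open cone $\mathcal{C}\subset\RR^3_{>0}$ cut out by the three strict triangular inequalities. Since $\text{Im}(\bs{\TT_3})$ depends only on the mutual distances of the receivers — isometries of the plane preserve distances and hence leave the image of the range map unchanged — the parameter space of the range surfaces is a quotient of $\mathcal{C}$, and it remains to rewrite $\mathcal{C}$ in suitable coordinates.

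Next I would change coordinates on $\mathcal{C}$ from $(d_{21},d_{31},d_{32})$ to $(a,c,\mu)$, where $a=\mb{\tilde{d}_{32}}\cdot\mb{\tilde{d}_{31}}=\cos\alpha$ and $c=\mb{\tilde{d}_{31}}\cdot\mb{\tilde{d}_{21}}=\cos\gamma$ are the cosines of the interior angles $\alpha$ at $\m{3}$ and $\gamma$ at $\m{1}$, and $\mu=d_{21}$ is a length scale. By the law of cosines, $a$ and $c$ are explicit functions of $(d_{21},d_{31},d_{32})$; moreover, once $a$ and $c$ (hence the similarity class of the triangle) are fixed, the law of sines shows that the only remaining freedom is a homothety, so $(d_{21},d_{31},d_{32})\mapsto(a,c,\mu)$ is a bijection of $\mathcal{C}$ onto its image. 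The crucial point is to compute this image. Non--collinearity forces $\alpha,\gamma\in(0,\pi)$, hence $a,c\in(-1,1)$; the third interior angle equals $\pi-\alpha-\gamma$, and it is strictly positive if and only if $\alpha+\gamma<\pi$, which — since $\cos$ is strictly decreasing on $[0,\pi]$ — is equivalent to $\cos\alpha>\cos(\pi-\gamma)=-\cos\gamma$, i.e. to $a+c>0$; finally $\mu$ is unconstrained in $\RR^+$. Conversely, for any $(a,c,\mu)$ with $a,c\in(-1,1)$, $a+c>0$ and $\mu>0$, the numbers $\alpha=\arccos a$, $\gamma=\arccos c$ and $\pi-\alpha-\gamma$ are three positive angles summing to $\pi$, so they are the angles of a genuine triangle, and $\mu$ picks a representative; this gives surjectivity, while injectivity is ASA congruence. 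Hence the image of $\mathcal{C}$ is exactly $\mathcal{P}$.

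Finally I would check that the correspondence (congruence class of receiver triangle)$\,\mapsto\,$(range surface) is injective, so that the parameter space of the range surfaces is genuinely $\mathcal{P}$ rather than a further quotient of $\mathcal{C}$. This is immediate: the singular points of $\text{Im}(\bs{\TT_3})$ are the images $\bs{\TT_3}(\m{i})$, and $\bs{\TT_3}(\m{1})=(0,d_{21},d_{31})$, $\bs{\TT_3}(\m{3})=(d_{31},d_{32},0)$, so $d_{21},d_{31},d_{32}$ — and hence the whole congruence class — can be read off the surface. Along the way one also recovers $b=\cos(\alpha+\gamma)=ac-\sqrt{(1-a^2)(1-c^2)}$, the branch of the Cayley relation \eqref{Cayley-sur} that is actually realized, consistently with $\beta=\alpha+\gamma$ being the exterior angle at $\m{2}$.

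I expect the main obstacle to be the image computation of the second paragraph: one must single out the non--redundant constraints and, above all, translate the geometric non--degeneracy of the triangle (all three interior angles positive) into the single algebraic inequality $a+c>0$, while keeping careful track of the fact that $a$ and $c$ are cosines of the two \emph{interior} angles whereas $b=\cos\beta$ involves the \emph{exterior} angle $\beta$. The remaining steps — the SSS/ASA reductions, the bijectivity of the coordinate change and the surjectivity onto $\mathcal{P}$ — are routine.
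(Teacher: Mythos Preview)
Your argument is correct and follows essentially the same route as the paper: both reduce to the angles of the receiver triangle, both identify the non--degeneracy condition $\alpha+\gamma<\pi$ and translate it via the monotonicity of cosine into $a+c>0$, and both treat the scale as a free positive parameter. Your framing through SSS/ASA congruence and your additional check that distinct congruence classes yield distinct surfaces (by reading off $d_{21},d_{31},d_{32}$ from the singular points of $\text{Im}(\bs{\TT_3})$) add rigor that the paper's proof leaves implicit, but the core is the same.
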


\begin{proof} Let us choose three non collinear points as receivers. We can resize the triangle, up to fix a positive scale factor. Hence, we can work on the angles of the triangle.
From the previous computations, we know that $$ b = ac - \sqrt{(1-a^2)(1-c^2)}.$$ As $ \alpha $ and $ \gamma $ are interior angles of the same triangle, their sum cannot be larger than $ \pi,$ thus $ \alpha \leq \pi - \gamma.$ The cosine function is decreasing in $ [0, \pi],$ and so $ \cos(\alpha) \geq \cos(\pi - \gamma) = -\cos(\gamma)$, that is to say, $ a \geq -c $, or $ a+c \geq 0.$ If $ a = \pm 1,$ then $ \alpha = 0$ or $ \alpha = \pi,$ hence the three receivers lie on the same line. The same happens if $c= \pm 1$. Finally, if $a+c=0$, then $ \beta = \alpha + \gamma = \pi $ and so again the receivers lie on the same line.

Conversely, if $ (a,c) $ is in the first factor of $\mathcal{P},$ then there exist $ \alpha, \gamma \in (0,\pi) $ such that $ \cos(\alpha) = a, \cos(\gamma) = c$, and $ \beta = \alpha + \gamma < \pi.$ Therefore, we can construct the triangle, up to choosing the length of one of its sides, and the proof is complete.
\end{proof}

The precise description of the TOA parameter space is instrumental for the study of applied problems such as the searching of the optimal sensors configuration for localizing a source under given constraints. This is not completely novel in the literature. For example, an analysis of the optimal sensor placement in TOA--based localization is proposed in \cite{Moran2012} for the case of two sensors, using Information Geometry. In Figure \ref{fig:spazio-param} we give a picture of the first factor $\tilde{\mathcal{P}} = \{ (a,c) \in (-1,1) \times (-1,1) \ \vert \ a+c > 0 \}$ of the parameter space $\mathcal{P}.$
\begin{figure}[htb!]
\centering
 \includegraphics[width=5.5cm,bb=208 304 403 487]{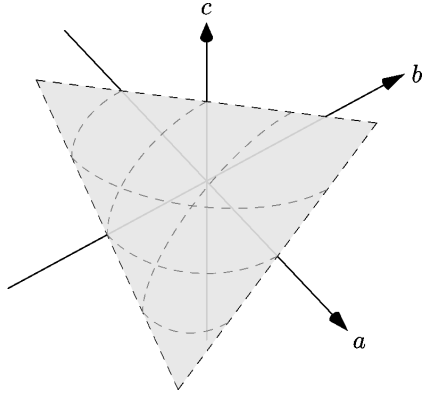}
 \caption{The surface $\tilde{\mathcal{P}}.$ The dashed black lines are the segments joining the singular points of the Cayley's surface containig $\tilde{\mathcal{P}}$. To every point of $\tilde{\mathcal{P}}$ uniquely corresponds a configuration of the angles $\alpha,\beta,\gamma$ in Figure \ref{fig:parametri-Kummer}. To get the triangle of the receivers, we have to fix the length of one of its side.}\label{fig:spazio-param}
 \end{figure}


\section{The image of $\bs{\TT_{3}}$ when the receivers are collinear}
\label{sec:collinear-microphones}

In this section, we consider the range--based localization when
$\m{1}$, $\m{2}$ and $\m{3}$ are collinear. As done with Definition \ref{def:rettecerchio}, we set
\begin{defn}\label{def:allineati}
The line containing $\m{1},\m{2},\m{3}$ is $r.$ The segment with end point $\m{j},\m{k}$ is $r_i^0,$ where $1\leq i,j,k\leq 3$ are all different from each other.
\end{defn}
From Section \ref{sec:MK} we recall that, in order to locate the source, we have
to intersect the three planes $\Lambda(\bs\TT) =
\Pi\left(\bs{\mathcal{T}}\right) \cap
\Pi^{1}_{2}\left(\bs{\mathcal{T}}\right) \cap
\Pi^{1}_{3}\left(\bs{\mathcal{T}}\right)$ and the cone
$C_{1}(\bs{\mathcal{T}}).$ As assumed in
Proposition \ref{Sol}, we can pick $\rho \in \mathbb{R}$ in such a way that
$\mb{d_{31}} = \rho \mb{d_{21}}.$ The intersection
$\Lambda(\bs\TT)$ is not empty if, and only if, equation
\eqref{eq:comp} is satisfied
$$
\mathcal{T}_{3}^{2} = (1 - \rho) \mathcal{T}_{1}^{2} + \rho \mathcal{T}_{2}^{2} - \rho (1-\rho) d_{21}^{2}\,.
$$
In this case, $\Lambda(\bs\TT)$ is a line of parametric equation
\begin{equation}
\mb{X}\left(\lambda; \bs{\TT}\right) = \mb{L_0}\left(\bs{\TT}\right) + \lambda \mb{v}\left(\bs{\TT}\right),
\end{equation}
where we followed the notations of Lemma \ref{4am}. By
replacing $\mb{X}\left(\lambda; \bs{\TT}\right)$ into the
equation of the cone
$\left\|\mb{D_1}\left(\mb{X},\bs{\TT}\right)\right\|^{2} = 0,$
we obtain the following quadratic equation in $\lambda:$
\begin{equation}\label{eq:norm}
\left\|\mb{D_1}\left(\mb{L_0}\left(\bs{\TT}\right) + \lambda \mb{v}\left(\bs{\TT}\right) \right)\right\|^{2} = 0.
\end{equation}
Since $\left\|\mb{v}\left(\bs{\TT}\right)\right\|^{2} =
d_{21}^{2}$ and $\left \langle
\mb{D_1}\left(\mb{L_0}\left(\bs{\TT}\right)\right),
\mb{v}\left(\bs{\TT}\right)\right \rangle = 0$, equation
\eqref{eq:norm} can be rewritten as
\begin{equation}\label{eq:norm2}
d_{21}^{2} \lambda^{2} + \left\|\mb{D_1}\left(\mb{L_0}\left(\bs{\TT}\right)\right)\right\|^{2} = 0.
\end{equation}
Given two real solutions $\lambda_\pm$ of \eqref{eq:norm2}, we
have two admissible source locations given by
$\mb{X}\left(\lambda_\pm; \bs{\TT}\right).$ The condition for
$\lambda_\pm$ to be real is
$\left\|\mb{D_1}\left(\mb{L_0}\left(\bs{\TT}\right)\right)\right\|^{2}
\leq 0$. After straightforward computations, this inequality
results equivalent to
\begin{equation}\label{eq:Sigma}
\left\|\mb{D_{21}}\left(\bs{\TT}\right)\right\|^{2} \left({d}_{21}^{2} - \left(\TT_{2} + \TT_{1}\right)^{2}\right) \leq 0,
\end{equation}
which in turn corresponds exactly to the triangular inequalities
involving $\m{1}$ and $\m{2}.$ Obviously, equation
\eqref{eq:norm2} has two coinciding solutions if, and only if,
$\left\|\mb{D_1}\left(\mb{L_0}\left(\bs{\TT}\right)\right)\right\|^{2}=0.$
This means that the localization is unique if, and only if,
$\mb{x}$ lies on the line $r.$ On the other hand, there are two distinct solutions
$\lambda_\pm$ if, and only if, inequality \eqref{eq:Sigma} is
strictly satisfied. Equivalently, there are two solutions
(symmetric with respect to $r$) for the localization problem if,
and only if, $\m{1}$, $\m{2}$ and $\mb{x}$ are not
collinear.

Now we are interested in describing the image of
$\bs{\TT_3}.$ From the previous discussion, the feasible set of the
TOAs is described by equation \eqref{eq:comp} and inequality
\eqref{eq:Sigma}. We begin our analysis by studying the geometrical
properties of the locus $\sigma$ in the $\TT$--space defined by
\eqref{eq:comp}.

\begin{prop} \label{lemma:sigma}
$ \sigma $ is a one--sheet hyperboloid for every $ \rho \not=
0,1,$ thus $ \sigma $ is ruled. Its principal axes are the coordinate ones.
Moreover, it is a hyperboloid of revolution if, and only if, $ \rho = -1,1/2,$ or
$2.$
\end{prop}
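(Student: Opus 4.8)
The plan is to read equation \eqref{eq:comp} as the equation of a central quadric in the $\TT$--space and then to classify it by completely elementary means. First I would rewrite \eqref{eq:comp} in the form
\begin{equation*}
(1-\rho)\,\TT_1^2 + \rho\,\TT_2^2 - \TT_3^2 = \rho(1-\rho)\,d_{21}^2 ,
\end{equation*}
so that $\sigma$ is the zero locus of a quadratic polynomial with diagonal quadratic part $\diag(1-\rho,\rho,-1)$ and no linear term. The absence of a linear term immediately gives that the centre of $\sigma$ is the origin and that its principal axes are the three coordinate axes, which is one of the claims. Moreover, since the receivers are pairwise distinct we have $d_{21}\neq 0$, and since $\rho\neq 0,1$ all three coefficients $1-\rho,\rho,-1$ and the right-hand side $\rho(1-\rho)d_{21}^2$ are nonzero; hence $\sigma$ is a non-degenerate central quadric.

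To pin down the type I would divide by $\rho(1-\rho)d_{21}^2$ to reach a normal form $\mu_1\TT_1^2+\mu_2\TT_2^2+\mu_3\TT_3^2=1$ and split into the three ranges $0<\rho<1$, $\rho>1$, $\rho<0$. A one-line sign check in each case shows that the signs of $(\mu_1,\mu_2,\mu_3)$ are $(+,+,-)$, $(+,-,+)$ and $(-,+,+)$ respectively; equivalently, one computes directly $\mu_1\mu_2\mu_3=-1/\bigl(\rho^2(1-\rho)^2 d_{21}^6\bigr)<0$, which together with the obvious non-emptiness of $\sigma$ already forces exactly one normalized coefficient to be negative. Therefore $\sigma$ is a one-sheeted hyperboloid for all $\rho\neq 0,1$, and since one-sheeted hyperboloids are doubly ruled, $\sigma$ is ruled.

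For the hyperboloid-of-revolution statement I would use the standard fact that a one-sheeted hyperboloid admits a rotational symmetry only about its conjugate axis --- the coordinate axis attached to the unique negative coefficient $\mu_\ell$ --- and is a surface of revolution precisely when the two semi-axes transverse to that axis coincide, i.e. when $\mu_i=\mu_j$ for the two remaining indices. Since these two coefficients share the denominator $\rho(1-\rho)d_{21}^2$, the condition $\mu_i=\mu_j$ reduces to $\lambda_i=\lambda_j$ with $(\lambda_1,\lambda_2,\lambda_3)=(1-\rho,\rho,-1)$. Running through the three cases: for $0<\rho<1$ the conjugate axis is the $\TT_3$--axis and the condition is $1-\rho=\rho$, i.e. $\rho=\tfrac12$; for $\rho>1$ it is the $\TT_2$--axis and the condition is $1-\rho=-1$, i.e. $\rho=2$; for $\rho<0$ it is the $\TT_1$--axis and the condition is $\rho=-1$. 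Hence $\sigma$ is a hyperboloid of revolution exactly for $\rho\in\{-1,\tfrac12,2\}$.

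All the computations involved are routine; the one delicate point is the bookkeeping of signs, namely keeping track, in each of the three $\rho$--ranges, of which coordinate axis plays the role of the conjugate axis of the hyperboloid, and then invoking the fact that a one-sheeted hyperboloid cannot be of revolution about any other axis, so that being of revolution forces the equality of precisely that pair of transverse semi-axes and rules out any spurious coincidence among the remaining coefficients.
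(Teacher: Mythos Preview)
Your proof is correct and follows essentially the same approach as the paper: both read the equation as a diagonal central quadric with coefficients $(1-\rho,\rho,-1)$, deduce from the sign pattern (the paper via the $4\times4$ determinant, you via normalization and a case split on $\rho$) that it is a one--sheet hyperboloid, and then characterize the revolution case by the coincidence of two of these coefficients. The only difference is cosmetic---the paper states the revolution criterion directly as ``an eigenvalue has multiplicity $\geq 2$'', while you unpack this geometrically by first locating the conjugate axis in each $\rho$--range.
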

\begin{proof}
The defining equation of $ \sigma $ is $$ (1-\rho) \TT_{1}^2 +
\rho \TT_{2}^2 - \TT_{3}^2 - \rho(1-\rho) d_{21}^2 = 0,$$ that is
the canonical form of a quadric surface. The $ 4 \times 4 $
associated symmetric matrix has determinant $ d_{21}^2\rho^2
(1-\rho)^2 > 0 $ for every $ \rho \not= 0,1.$ Thus, $ \sigma $ is
a ruled smooth quadric. The eigenvalues of the $ 3 \times 3 $
symmetric matrix associated to the quadratic part of the equation
are $ 1-\rho, \rho, -1.$ It follows that they all are non zero for
$ \rho \not= 0,1$ and they are not all positive or negative,
therefore $ \sigma $ is a one--sheet hyperboloid. In particular,
it is circular if, and only if, an eigenvalue has multiplicity at
least $ 2,$ i.e. only when $ \rho = -1,1/2$ or $ 2.$
\end{proof}

For the sake of shortening the mathematical details, from now on we assume that $\m{3}\in r_3^0$ and so $\rho\in(0,1).$ This causes no loss of generality, as we can go back to this situation by simply relabeling the sensors.

As said above, Im$(\bs{\TT_3})$ is constrained in the region of the $\TT$--space defined by inequality \eqref{eq:Sigma}. However, we can provide an even smaller set containing the image by considering the polyhedron $Q_3$ given in Definition \ref{def:Q3}. In the current scenario, $Q_3$ is a simpler region with respect to the one we described in Theorem \ref{th:Q3}.
\begin{thm}
In the described configuration of the sensors, the polyhedron $Q_3$ is defined by the following non redundant inequalities:
\begin{equation}\label{eq:Q3s}
\left\{\begin{array}{l}
\TT_{1} - \TT_{3} \leq d_{31}\\
\TT_{2} - \TT_{3} \leq d_{32}\\
\TT_1+\TT_2\geq d_{21}\\
d_{32}\TT_1 + d_{31}\TT_2 - d_{21}\TT_3\geq0
\end{array}\right.\,.
\end{equation}
$Q_3$ is contained in the first octant $W$ of the $\TT$--space and it has $3$ unbounded facets plus a fourth bounded one, $3$ edges that are half lines parallel to the vector $(1,1,1)$ and $3$ edges that are the line segments connecting the $3$ vertices $(d_{31},d_{32},0),$ $(d_{21},0,d_{32}),$ $(0,d_{21},d_{31}).$
\end{thm}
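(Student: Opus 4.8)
The crucial point is that the hypothesis $\m{3}\in r_3^0$ (equivalently $\rho\in(0,1)$) forces $\m{3}$ to lie on the segment from $\m{1}$ to $\m{2}$ along $r$, so that $d_{21}=d_{31}+d_{32}$. Because of this identity the fourth inequality of \eqref{eq:Q3s} rewrites as $d_{32}(\TT_1-\TT_3)+d_{31}(\TT_2-\TT_3)\ge 0$, and setting $u=\TT_1-\TT_3$, $w=\TT_2-\TT_3$ the four inequalities \eqref{eq:Q3s} read $u\le d_{31}$, $w\le d_{32}$, $\TT_1+\TT_2\ge d_{21}$, $d_{32}u+d_{31}w\ge 0$. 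The first step of the plan is to deduce from these four all twelve inequalities of Definition \ref{def:Q3}. From $w\le d_{32}$ and $d_{32}u+d_{31}w\ge 0$ one gets $u\ge -d_{31}$, and symmetrically $w\ge -d_{32}$; hence $|\TT_1-\TT_3|\le d_{31}$, $|\TT_2-\TT_3|\le d_{32}$, and $|\TT_1-\TT_2|\le|u|+|w|\le d_{21}$. From $\TT_1+\TT_2\ge d_{21}$ together with $u\le d_{31}$ (resp.\ $w\le d_{32}$) one gets $\TT_2+\TT_3\ge d_{32}$ (resp.\ $\TT_1+\TT_3\ge d_{31}$), and then the two remaining Kummer inequalities of Definition \ref{def:Q3}, after using $d_{21}=d_{31}+d_{32}$ once more, equal $d_{32}(\TT_1+\TT_3)-d_{31}w\ge d_{31}d_{32}-d_{31}d_{32}=0$ and $d_{31}(\TT_2+\TT_3)-d_{32}u\ge 0$. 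The same bounds give $\TT_1,\TT_2,\TT_3\ge 0$: if $\TT_3<0$ then $\TT_1=\TT_3+u<d_{31}$ and $\TT_2=\TT_3+w<d_{32}$ contradict $\TT_1+\TT_2\ge d_{21}$; and if $\TT_1<0$ then $\TT_1+\TT_2\ge d_{21}$ gives $\TT_2>d_{21}$, hence $\TT_3=\TT_2-w>d_{21}-d_{32}=d_{31}$, hence $\TT_1=\TT_3+u>d_{31}-d_{31}=0$, a contradiction, and similarly for $\TT_2$. This proves that $Q_3$ is exactly the region \eqref{eq:Q3s} and that $Q_3\subset W$.

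The second step is to show that none of the four inequalities \eqref{eq:Q3s} is redundant, by producing for each a relative interior point of the corresponding face. Put $P_1=(d_{31},d_{32},0)=\bs{\TT_3}(\m{3})$, $P_2=(d_{21},0,d_{32})=\bs{\TT_3}(\m{2})$, $P_3=(0,d_{21},d_{31})=\bs{\TT_3}(\m{1})$. For the third inequality one can take the centroid of the triangle $P_1P_2P_3$; for each of the other three, the midpoint of a bounded edge of the face (namely $P_1P_2$, $P_1P_3$ and $P_2P_3$ respectively) translated by a small positive multiple of $(1,1,1)$, which makes $\TT_1+\TT_2\ge d_{21}$ strict while leaving the other constraints strict. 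Hence $Q_3$ has exactly four facets.

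The third step is to read off the whole face lattice. Intersecting the four facet hyperplanes three at a time, the triple formed by the first, second and fourth hyperplanes has no solution, since substituting $\TT_1=\TT_3+d_{31}$ and $\TT_2=\TT_3+d_{32}$ into the fourth equation leaves, thanks to $d_{31}+d_{32}-d_{21}=0$, the impossible relation $2d_{31}d_{32}=0$; each of the other three triples has a unique solution, which turns out to be one of $P_1,P_2,P_3$, all lying in $Q_3$. Thus the vertices of $Q_3$ are exactly $P_1,P_2,P_3$. The third facet meets each of the other three in a bounded segment, and these three segments are the sides $P_1P_2$, $P_1P_3$, $P_2P_3$, so the third facet is the bounded triangle $P_1P_2P_3$. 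Each of the remaining three pairs of facets meets $Q_3$ in a half-line: computing the recession cone of \eqref{eq:Q3s}, and using $d_{21}=d_{31}+d_{32}$ once more, a recession direction $v$ must satisfy $v_1\le v_3$, $v_2\le v_3$ and $d_{32}(v_1-v_3)+d_{31}(v_2-v_3)\ge 0$, which forces $v_1=v_2=v_3\ge 0$; hence the recession cone is the single ray $\RR_{\ge 0}(1,1,1)$, the three remaining edges are half-lines parallel to $(1,1,1)$, one through each of $P_1,P_2,P_3$, and the first, second and fourth facets are unbounded, each bounded by one side of the triangle together with the two half-line edges at its endpoints. This is precisely the stated description.

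The argument is elementary throughout; the only step demanding genuine care is the first one, where the twelve implications and the three positivity statements must be sequenced so that each uses only relations already derived, and where the identity $d_{21}=d_{31}+d_{32}$ has to be invoked at exactly the right moments. I do not expect any conceptual obstacle.
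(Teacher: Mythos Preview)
Your proof is correct and follows essentially the same strategy as the paper's: first derive all twelve inequalities of Definition~\ref{def:Q3} from the four in \eqref{eq:Q3s} using the identity $d_{21}=d_{31}+d_{32}$, then establish non-redundancy and read off the face lattice. Your execution is somewhat more systematic---the substitution $u=\TT_1-\TT_3$, $w=\TT_2-\TT_3$ streamlines the algebra, you give explicit witness points for non-redundancy rather than the paper's prism-then-cut description, and you compute the recession cone to pin down the unbounded edges---but these are organizational refinements of the same argument rather than a different route.
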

\begin{proof}
Firstly, we prove that in the aligned scenario the system \eqref{eq:ineq12} is equivalent to \eqref{eq:Q3s}. Since $d_{31}=\rho d_{21}$ and $d_{32}=(1-\rho)d_{21},$ the last inequality in \eqref{eq:Q3s} can be rewritten as
\begin{equation}\label{eq:ineqarcos}
\TT_3\leq(1-\rho)\TT_1+\rho\TT_2.
\end{equation}
By combining \eqref{eq:ineqarcos} with the first inequality, we get
$$
d_{31}\geq \TT_1-\TT_3\geq \TT_1-(1-\rho)\TT_1-\rho\TT_2=\rho(\TT_1-\TT_2),
$$
thus $\TT_1-\TT_2\leq d_{21}.$ With similar computations, by using the second inequality we get $\TT_1-\TT_2\geq -d_{21}.$ On the other hand, the second inequality in \eqref{eq:Q3s} can also be rewritten as
$$
\TT_2\leq\TT_3+(1-\rho)d_{21}\quad \Longrightarrow\quad \TT_3\leq(1-\rho)\TT_1+\rho\TT_3+\rho(1-\rho)d_{21}
$$
and, after simplifications, we obtain $\TT_1-\TT_3\geq -d_{31}.$ Similarly, starting from the first inequality we arrive to $\TT_2-\TT_3\geq -d_{32}.$ This means that \eqref{eq:Q3s} implies all the $6$ inequalities in the first row of \eqref{eq:ineq12}.

The inequality $\TT_2+\TT_3\geq d_{32}$ in the second row of \eqref{eq:ineq12} can be obtained by subtracting in \eqref{eq:Q3s} the first inequality from the third one. Similarly, $\TT_1+\TT_3\geq d_{31}$ is equivalent to the subtraction of the second inequality from the third one and this completes the second row of \eqref{eq:ineq12}. Let us observe that the $9$ inequalities we just proved imply $\TT_1,\TT_2,\TT_3\geq 0.$

What remains to be verified are the last two inequalities in \eqref{eq:ineq12}. By inequalities \eqref{eq:ineqarcos} and $0<\rho<1,$ we have
$$
d_{32}\TT_1 - d_{31}\TT_2 + d_{21}\TT_3 =
d_{21}((1-\rho)\TT_1 - \rho\TT_2 + \TT_3)\geq
2d_{21}(\TT_3-\rho\TT_2)\geq 2d_{21}(\TT_3-\TT_2)\geq 2d_{21}d_{32}>0,
$$
where at the end we used the second row of \eqref{eq:Q3s}. This proves that system \eqref{eq:Q3s} implies the second last inequality in \eqref{eq:ineq12}, and the last one follows similarly.

To show that inequalities in \eqref{eq:Q3s} are non redundant, we observe that the system
\begin{equation*}
\left\{\begin{array}{l}
\TT_{1} - \TT_{3} \leq d_{31}\\
\TT_{2} - \TT_{3} \leq d_{32}\\
d_{32}\TT_1 + d_{31}\TT_2 - d_{21}\TT_3\geq0
\end{array}\right.
\end{equation*}
defines a polyhedron with $3$ facets and $3$ edges parallel to the vector $(1,1,1).$ The remaining inequality $\TT_1+\TT_2\geq d_{21}$ cuts such polyhedron and pick out the part completely contained in the first octant of $\RR^3,$ with vertices $(d_{31},d_{32},0),(d_{21},0,d_{32}),(0,d_{21},d_{31}).$
\end{proof}

In the following, we are going to prove that Im$(\bs{\TT_3})$ coincides with $\sigma\cap Q_3.$ Preliminarily, in the next Proposition we give a geometric description of such intersection.
\begin{prop}\label{headache}
Let us set the map
\begin{equation}\label{eq:T3map}
\begin{array}{cccc}
\varphi: & Q_{2} & \longrightarrow & [0,+\infty)\\
& (\TT_{1},\TT_{2}) & \longmapsto & \sqrt{(1-\rho)\TT_1^2+\rho\TT_2^2-\rho(1-\rho)d_{21}^2}
\end{array}.
\end{equation}
Then, $\sigma \cap Q_{3}$ is the graph $G_\varphi$ of the function $\varphi.$
The boundary $\partial(\sigma \cap Q_{3})=\sigma\cap\partial Q_3$ is the connected union of $2$ bounded and $2$ unbounded edges of $Q_3,$ given by $\varphi(\partial Q_2).$
\end{prop}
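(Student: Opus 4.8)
The plan is to establish the equality $\sigma\cap Q_3=G_\varphi$ first, and then to read the boundary off from it. For the inclusion $\sigma\cap Q_3\subseteq G_\varphi$: since in the present configuration $\rho\in(0,1)$, every point of $Q_3$ has nonnegative coordinates, so the defining equation \eqref{eq:comp} of $\sigma$ forces $\TT_3=\sqrt{(1-\rho)\TT_1^2+\rho\TT_2^2-\rho(1-\rho)d_{21}^2}$; and that $(\TT_1,\TT_2)$ then satisfies the triangular inequalities cutting out $Q_2$ is exactly the chain of manipulations already carried out in the proof of the preceding Theorem (combining $\TT_3\le(1-\rho)\TT_1+\rho\TT_2$ with $\TT_1-\TT_3\le d_{31}$ and with $\TT_2-\TT_3\le d_{32}$ to get $|\TT_1-\TT_2|\le d_{21}$, the inequality $\TT_1+\TT_2\ge d_{21}$ being already among \eqref{eq:Q3s}).

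For the reverse inclusion $G_\varphi\subseteq\sigma\cap Q_3$ I would first check that $\varphi$ is defined on all of $Q_2$, i.e.\ that $R:=(1-\rho)\TT_1^2+\rho\TT_2^2-\rho(1-\rho)d_{21}^2\ge 0$ there. Writing $s=\TT_1-\TT_2$ and $u=\TT_1+\TT_2$, one has $\partial R/\partial u=\frac{1}{2}(u+(1-2\rho)s)\ge \frac{1}{2}(u-|s|)\ge 0$ on $Q_2$, and $R=\frac{1}{4}((1-2\rho)d_{21}+s)^2$ when $u=d_{21}$; hence $R\ge 0$ on $Q_2$, with $R=0$ only at $(\TT_1,\TT_2)=(d_{31},d_{32})$. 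Setting $\TT_3=\varphi(\TT_1,\TT_2)$, the four inequalities \eqref{eq:Q3s} then follow from $\TT_1+\TT_2\ge d_{21}$ (because $(\TT_1,\TT_2)\in Q_2$) together with the factorisations $((1-\rho)\TT_1+\rho\TT_2)^2-\varphi^2=\rho(1-\rho)(d_{21}^2-(\TT_1-\TT_2)^2)\ge 0$ and $\varphi^2-(\TT_1-d_{31})^2=\rho(\TT_1+\TT_2-d_{21})(\TT_2-\TT_1+d_{21})\ge 0$ (the second needed only when $\TT_1>d_{31}$, the bound $\TT_1-\TT_3\le d_{31}$ being trivial otherwise), plus the mirror image of the latter under $\rho\leftrightarrow 1-\rho$, $\TT_1\leftrightarrow\TT_2$, which settles $\TT_2-\TT_3\le d_{32}$ since $\varphi^2$ is symmetric under this swap. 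This yields $\sigma\cap Q_3=G_\varphi$.

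Now $\sigma\cap Q_3=G_\varphi$ is the graph of the continuous function $\varphi$ over the closed convex planar region $Q_2$, and over $\mathring Q_2$ one has $\varphi>0$, so there $\sigma$ is locally the graph $\TT_3=+\sqrt R$ over the $(\TT_1,\TT_2)$-plane. Consequently the projection $\pi\colon\sigma\cap Q_3\to Q_2$ is a homeomorphism with $\pi^{-1}(\mathring Q_2)=\sigma\cap\mathring Q_3$ open in $\sigma$ (a point of $G_\varphi$ meets all four inequalities \eqref{eq:Q3s} strictly exactly when its projection lies in $\mathring Q_2$, by the factorisations above), whence $\partial(\sigma\cap Q_3)=(\sigma\cap Q_3)\setminus(\sigma\cap\mathring Q_3)=\sigma\cap\partial Q_3=\pi^{-1}(\partial Q_2)=\varphi(\partial Q_2)$, giving both equalities of the statement. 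Finally I would identify $\varphi(\partial Q_2)$ with edges of $Q_3$: writing $\partial Q_2$ as the union of the bounded segment $\{\TT_1+\TT_2=d_{21}\}$ and the two half-lines $\{\TT_1-\TT_2=d_{21}\}$, $\{\TT_2-\TT_1=d_{21}\}$, the radicand restricts to a perfect square on each piece --- respectively $(\TT_1-d_{31})^2$, $(\TT_2+d_{32})^2$, $(\TT_1+d_{31})^2$ --- so $(\TT_1,\TT_2)\mapsto(\TT_1,\TT_2,\varphi)$ is affine on each. Hence the two half-lines map to the rays $\bs{\TT_3}(\m{2})+t(1,1,1)$ and $\bs{\TT_3}(\m{1})+t(1,1,1)$, two unbounded edges of $Q_3$, while the segment, broken at $(d_{31},d_{32})$ (the one point where $\TT_3=0$ and the image hits the vertex $\bs{\TT_3}(\m{3})$), maps to the two bounded edges $\overline{\bs{\TT_3}(\m{1})\,\bs{\TT_3}(\m{3})}$ and $\overline{\bs{\TT_3}(\m{3})\,\bs{\TT_3}(\m{2})}$ of $Q_3$; these four edges share consecutive endpoints, so their union is connected, as claimed.

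I expect the only genuine work to be the bookkeeping in this last step --- tracking which branch of the square root $\varphi$ is realised on each of the three pieces of $\partial Q_2$, hence which of the four facet inequalities of $Q_3$ becomes an equality, so as to name the four edges correctly and to see the point $(d_{31},d_{32})$ splitting the segment piece --- together with the elementary but mildly delicate check that $R\ge 0$ on $Q_2$ and degenerates to a square precisely over $\partial Q_2$.
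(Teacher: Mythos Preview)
Your proof is correct and follows essentially the same route as the paper: show $\varphi$ is well-defined on $Q_2$, verify $G_\varphi\subset Q_3$ by checking the four inequalities \eqref{eq:Q3s} via algebraic factorisations, observe the reverse inclusion from $Q_3\subset W$ and the fact that the projection of $Q_3$ lands in $Q_2$, and then evaluate $\varphi$ on the three pieces of $\partial Q_2$ to read off the four edges. The only noteworthy difference is in the well-definedness step: the paper argues geometrically that the conic $\{R=0\}$ is tangent to each supporting line of $Q_2$ and hence lies entirely outside $Q_2$ (checking the sign at one interior point), whereas you use the substitution $u=\TT_1+\TT_2$, $s=\TT_1-\TT_2$ and monotonicity in $u$ to reduce to the boundary segment $u=d_{21}$, where $R$ becomes a perfect square. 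Your argument has the minor advantage of identifying the unique zero $(d_{31},d_{32})$ and preparing the perfect-square restrictions you need later for the boundary; the paper's tangency argument is shorter but then recomputes those restrictions separately.
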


\begin{proof}
First of all, we have to prove that $\varphi$ is well--defined. This is true if, and only if,
\begin{equation}\label{eq:ineqboh}
(1 - \rho) \mathcal{T}_{1}^{2} + \rho \mathcal{T}_{2}^{2} - \rho (1-\rho) d_{21}^{2}\geq 0
\end{equation}
for every $\left(\TT_{1},\TT_{2}\right) \in Q_{2}$.
In the $\TT$--plane, the equality
$(1 - \rho) \mathcal{T}_{1}^{2} + \rho \mathcal{T}_{2}^{2} - \rho (1-\rho) d_{21}^{2}=0$
defines a conic $C$ with center at the origin and tangent to each line supporting the facets of $Q_2.$ Therefore, $C$ has to be either contained in $Q_2$ or in its complement $\RR^2\setminus Q_2.$ On the other hand, as $(0,0)\notin Q_2$ it follows that $C\not\subset Q_2$ and so the left side of \eqref{eq:ineqboh} has constant sign on $Q_2.$ Through a direct check, we have that the point $(d_{21},d_{21})\in Q_2$ satisfies inequality \eqref{eq:ineqboh}, hence the same is true for any point in $Q_2.$

It is trivial to verify that the point $(\TT_1,\TT_2,\varphi(\TT_1,\TT_2))\in\sigma$ for every $(\TT_1,\TT_2)\in Q_2.$ Indeed, the map $\varphi$ can be extended to the real plane minus the region internal to $C$ and the part of the hyperboloid $\sigma$ in the half--space $\TT_3\geq 0$ is exactly the graph of $\varphi.$ This also means that $\sigma\cap Q_3\subseteq G_\varphi,$ because $Q_{2}$ contains the projection of $Q_{3}$ onto the $\left(\TT_{1},\TT_{2}\right)$--plane.

What it still to be proven is that $G_\varphi\subset Q_3.$ Since $\TT_1+\TT_2\geq d_{21}$ is a defining inequality of $Q_2,$ it is satisfied for any point in $G_\varphi.$ We have to check
\begin{equation*}
\TT_{1} - \TT_{3} \leq d_{31},\qquad
\TT_{2} - \TT_{3} \leq d_{32},\qquad
d_{32}\TT_1 + d_{31}\TT_2 - d_{21}\TT_3\geq0,
\end{equation*}
for $(\TT_1,\TT_2)\in Q_2$ and $\TT_{3}=\varphi(\TT_1,\TT_2).$ The first inequality can be rewritten as
$$
\TT_1-\rho d_{21}\leq\sqrt{(1-\rho)\TT_1^2+\rho\TT_2^2-\rho(1-\rho)d_{21}^2}.
$$
With straightforward computation, we arrive to the system of inequalities $\TT_1+\TT_2\geq d_{21}$ and $\TT_1-\TT_2\leq d_{21},$ that are verified for every point in $Q_2.$ In similar way, one can check also the second inequality.
Instead, the last one can be rewritten as
$$
((1-\rho)\TT_1+\rho\TT_2)^2\geq (1-\rho)\TT_1^2+\rho\TT_2^2-\rho(1-\rho)d_{21}^2.
$$
With easy computations, we get
$\TT_1^2+\TT_2^2-2\TT_1\TT_2\leq d_{21}^2,$
i.e. $|\TT_1-\TT_2|\leq d_{21},$ that again is verified for every $(\TT_1,\TT_2)\in Q_2.$

Finally, since $\varphi$ is continuous on $\mathring Q_2,$ we have that $\partial G_\varphi=\varphi(\partial Q_2).$ In the $\TT$--space, the boundary $\partial Q_{2}$ is given by two half--lines $l_\pm$ and one line segment $s:$
\begin{equation*}
l_+:\ \TT_{1} = \TT_{2} + d_{21},\ \TT_2\geq 0,\qquad\
l_-:\ \TT_{1} = \TT_{2} - d_{21},\ \TT_2\geq d_{21},\qquad\
s:\ \TT_{1} = d_{21} - \TT_{2},\ 0\leq\TT_2\leq d_{21}.
\end{equation*}
The evaluations of $\varphi$ on $l_+,l_-$ and $s$ are, respectively:
\begin{equation*}
\varphi(\TT_{2}+d_{21},\TT_{2}) = \TT_{2} + (1-\rho) d_{21},\quad
\varphi(\TT_{2}-d_{21},\TT_{2}) = \TT_{2} - (1-\rho) d_{21},\quad
\varphi(d_{21}-\TT_{2},\TT_{2}) = \vert \TT_{2} - (1-\rho) d_{21}\vert.
\end{equation*}
It follows that
\begin{equation*}
\varphi(l_+):\ \left\{
\begin{array}{l}
\TT_{1} = t + d_{21}\\
\TT_{2} = t\\
\TT_{3} = t + d_{32}
\end{array}\right.,\quad t \geq 0,\qquad\qquad
\varphi(l_-):\ \left\{
\begin{array}{l}
\TT_{1} = t - d_{21}\\
\TT_{2} = t\\
\TT_{3} = t - d_{32}
\end{array}\right.,\quad t \geq d_{21},
\end{equation*}
while $\varphi(s)$ is the union of two line segments $\varphi(s)_+\cup\varphi(s)_-$
\begin{equation*}
\varphi(s)_+:\ \left\{
\begin{array}{l}
\TT_{1} = d_{21} - t\\
\TT_{2} = t\\
\TT_{3} = d_{32} - t
\end{array}\right.,\quad 0 \leq t \leq d_{32},\qquad\qquad
\varphi(s)_-:\ \left\{
\begin{array}{l}
\TT_{1} = d_{21} - t\\
\TT_{2} = t\\
\TT_{3} = t - d_{32}
\end{array}\right.,\quad d_{32} \leq t \leq d_{21}.
\end{equation*}
The parallel half--lines $\varphi(l_+)$ and $\varphi(l_-)$ are the two unbounded edges of $Q_3$ with end points $(d_{21},0,d_{32})$ and $(0,d_{21},d_{31}),$ respectively. They are connected via the two bounded edges $\varphi(s)_+$ and $\varphi(s)_-,$ that meet at $ (d_{31}, d_{32}, 0).$
\end{proof}
Now, we are ready to state the main result of this Section.
\begin{thm}\label{headache2}
$Im\left(\bs{\TT_3}\right) = \sigma \cap Q_{3}$. In particular, given $\bs{\TT} \in Im\left(\bs{\TT_3}\right)$ then
\begin{align*}
\left| \bs{\TT_3}^{-1}\left(\bs{\TT}\right)\right| = \begin{cases}
2, \quad \text{if}\quad \bs{\TT} \in \mathring{Q}_{3} \cr
1, \quad \text{if}\quad \bs{\TT} \in \partial Q_{3} \end{cases} .
\end{align*}
\end{thm}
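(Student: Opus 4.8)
The plan is to reduce the collinear three--receiver problem to the two--receiver analysis of Theorem~\ref{prop:caso2mic}, using crucially that $\m{3}$ lies on the line $r$ through $\m{1},\m{2}$. The first step is a purely algebraic identity, valid for \emph{every} $\x\in\RR^{2}$: from $\mb{d_3}(\x)=\mb{d_1}(\x)-\mb{d_{31}}=\mb{d_1}(\x)-\rho\,\mb{d_{21}}$ together with $\mb{d_1}(\x)\cdot\mb{d_{21}}=\frac{1}{2}\left(d_1(\x)^{2}+d_{21}^{2}-d_2(\x)^{2}\right)$ one obtains
$$
d_3(\x)^{2}=(1-\rho)\,d_1(\x)^{2}+\rho\,d_2(\x)^{2}-\rho(1-\rho)\,d_{21}^{2}.
$$
Since $d_3(\x)\geq 0$ and $(d_1(\x),d_2(\x))\in Q_2=\text{Im}(\bs{\TT_2})$ by Theorem~\ref{prop:caso2mic}, this says exactly that $d_3(\x)=\varphi(d_1(\x),d_2(\x))$, where $\varphi$ is the map \eqref{eq:T3map} (well defined on $Q_2$ by Proposition~\ref{headache}). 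Hence $\bs{\TT_3}(\x)\in G_\varphi$ for all $\x$, i.e. $\text{Im}(\bs{\TT_3})\subseteq G_\varphi$.

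For the reverse inclusion, let $\bs{\TT}\in G_\varphi$. Then $(\TT_1,\TT_2)\in Q_2=\text{Im}(\bs{\TT_2})$, so by Theorem~\ref{prop:caso2mic} there is $\x$ with $d_1(\x)=\TT_1$ and $d_2(\x)=\TT_2$; the identity above then forces $d_3(\x)=\varphi(\TT_1,\TT_2)=\TT_3$, so $\bs{\TT_3}(\x)=\bs{\TT}$. Combining the two inclusions with Proposition~\ref{headache} (which identifies $G_\varphi$ with $\sigma\cap Q_3$) gives $\text{Im}(\bs{\TT_3})=\sigma\cap Q_3$. The same identity pins down the fibres: since $d_3$ is a \emph{function} of $(d_1,d_2)$ on all of $\RR^{2}$ (equivalently, reflecting a source across $r$ leaves $d_3$ unchanged because $\m{3}\in r$), for $\bs{\TT}\in\text{Im}(\bs{\TT_3})$ we have
$$
\bs{\TT_3}^{-1}(\bs{\TT})=\left\{\x\in\RR^{2}\ :\ d_1(\x)=\TT_1,\ d_2(\x)=\TT_2\right\}=\bs{\TT_2}^{-1}(\TT_1,\TT_2),
$$
whence $\left|\bs{\TT_3}^{-1}(\bs{\TT})\right|=\left|\bs{\TT_2}^{-1}(\TT_1,\TT_2)\right|$, which by Theorem~\ref{prop:caso2mic} equals $2$ if $(\TT_1,\TT_2)\in\mathring{Q}_2$ and $1$ if $(\TT_1,\TT_2)\in\partial Q_2$.

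It then remains to match the stratification of $Q_2$ with that of $Q_3$, i.e. to show that for $\bs{\TT}\in G_\varphi$ one has $(\TT_1,\TT_2)\in\mathring{Q}_2$ exactly when $\bs{\TT}\in\mathring{Q}_3$, and $(\TT_1,\TT_2)\in\partial Q_2$ exactly when $\bs{\TT}\in\partial Q_3$. Here I would invoke Proposition~\ref{headache} once more: it asserts $\sigma\cap\partial Q_3=\varphi(\partial Q_2)$, while $\sigma\cap Q_3=G_\varphi=\varphi(Q_2)$ and the graph map $(\TT_1,\TT_2)\mapsto(\TT_1,\TT_2,\varphi(\TT_1,\TT_2))$ is a homeomorphism $Q_2\to G_\varphi$. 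Since $G_\varphi\subseteq Q_3$, we get the partition $G_\varphi=(G_\varphi\cap\mathring{Q}_3)\sqcup(G_\varphi\cap\partial Q_3)$, hence $G_\varphi\cap\mathring{Q}_3=\varphi(Q_2)\setminus\varphi(\partial Q_2)=\varphi(\mathring{Q}_2)$ and $G_\varphi\cap\partial Q_3=\varphi(\partial Q_2)$; this is precisely the required correspondence. Feeding it into the fibre count of the previous paragraph completes the proof.

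The only genuinely delicate point is this last matching — ruling out that some point of $G_\varphi$ lying over $\mathring{Q}_2$ nevertheless sits on a facet of $Q_3$ — and this is exactly what Proposition~\ref{headache} (together with $G_\varphi\subseteq Q_3$) prevents, so everything else is bookkeeping. As an alternative one could bypass the reduction to $\bs{\TT_2}$ and instead count directly the real roots $\lambda_\pm$ of the quadratic \eqref{eq:norm2} along the ruling line $\Lambda(\bs{\TT})$, noting that \eqref{eq:norm2} has a double root if and only if $\left\|\mb{D_1}(\mb{L_0}(\bs{\TT}))\right\|^{2}=0$, if and only if equality holds in \eqref{eq:Sigma}, if and only if $(\TT_1,\TT_2)\in\partial Q_2$; but this merely re--derives facts already established for the two--receiver problem.
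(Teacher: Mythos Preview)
Your proof is correct and follows essentially the same approach as the paper: both reduce to the two--receiver map via the identity $d_3(\x)^2=(1-\rho)d_1(\x)^2+\rho d_2(\x)^2-\rho(1-\rho)d_{21}^2$ (which the paper cites as the compatibility condition~\eqref{eq:comp} from Proposition~\ref{Sol}, while you re--derive it from the dot product), invoke Proposition~\ref{headache} to identify $G_\varphi=\sigma\cap Q_3$ and to match $\partial Q_2$ with $\sigma\cap\partial Q_3$, and read the fibre count off Theorem~\ref{prop:caso2mic}. Your treatment of the stratification matching is in fact more explicit than the paper's, which simply writes $\bs{\TT_3}(r)=(\bs{\TT_2}(r),\varphi(\bs{\TT_2}(r)))=\text{Im}(\bs{\TT_3})\cap\partial Q_3$ and leaves the complementary statement implicit.
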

\begin{proof}
Let $\bs\TT=(\TT_1,\TT_2,\TT_3)$ be a point in the $\TT$--space. By the previous Propositions, if $\bs\TT\in\sigma\cap Q_3$ then equation \eqref{eq:comp} and inequality \eqref{eq:Sigma} holds, therefore $\bs\TT\in\text{Im}(\bs{\TT_3}).$ Conversely, if $\bs\TT\in\text{Im}(\bs{\TT_3}),$ then $(\TT_1,\TT_2)\in\text{Im}(\bs{\TT_2})=Q_2$ and $\TT_3$ is a positive number satisfying equation \eqref{eq:comp}. It follows that $\TT_3=\varphi(\TT_1,\TT_2)$ and, by Proposition \ref{headache}, $\bs\TT\in\sigma\cap Q_3.$

Furthermore, from the discussion at the beginning of this Section, we know that the range map is $1$--to--$1$ if, and only if, $\x\in r.$ Since $\bs{\TT_2}(r)=\partial Q_2,$ Proposition \ref{headache} implies that $\bs{\TT_3}(r)=(\bs{\TT_2}(r),\varphi(\bs{\TT_2}(r)))=\text{Im}(\bs{\TT_3})\cap\partial Q_3,$ and the second claim follows.
\end{proof}

\begin{figure}[htb!]
\centering
 \includegraphics[width=6cm,bb=208 297 403 494]{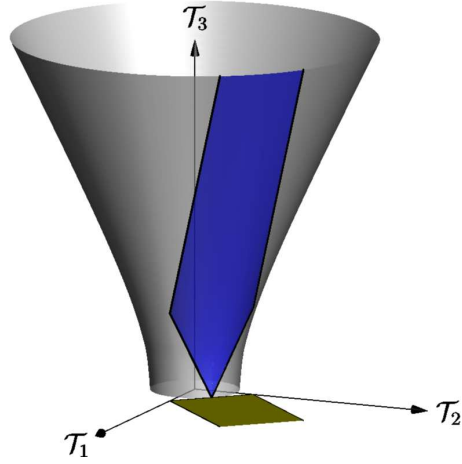}
 \caption{The range surface for the aligned configuration of sensors, where $\rho=0.5.$ The half of the hyperboloid $\sigma$ in $\TT_3\geq 0$ is the graph of the (extension of the) function $\varphi$ defined in Proposition \ref{headache}. The image of $\bs{\TT_3}$ is the blue region, which corresponds to $\sigma\cap Q_3$. Its projection on the coordinate plane $(\TT_1,\TT_2)$ is the unbounded polyhedron $Q_2.$}\label{fig:TOAdegsurf}
 \end{figure}

\begin{rem} \label{rem:kummer-to-sigma}
If we substitute $\mb{d_{31}} = \rho \mb{d_{21}}$ and $\mb{d_{32}} = (1-\rho) \mb{d_{21}}$ into the defining equation \eqref{eq:imm2} of the Kummer's surface $S,$ we get
\begin{equation*}
\begin{split}
 & (1-\rho)^2d_{21}^2 \mathcal{T}_1^4 + \rho^2 d_{21}^2 \mathcal{T}_2^4 + d_{21}^2 \mathcal{T}_3^4 -\\
 & -2 \rho(1-\rho) d_{21}^2 \mathcal{T}_1^2 \mathcal{T}_2^2
+2 (1-\rho)d_{21}^2 \mathcal{T}_1^2 \mathcal{T}_3^2 -2 \rho d_{21}^2 \mathcal{T}_2^2 \mathcal{T}_3^2 -\\
& -2 \rho (1-\rho)^2 d_{21}^4 \mathcal{T}_1^2 + 2 \rho^2 (1-\rho) d_{31}^4 \mathcal{T}_2^2 - 2 \rho (1-\rho) d_{21}^4\mathcal{T}_3^2 +\\
& + \rho^2 (1-\rho)^2 d_{21}^6 = 0
\end{split}\quad ,
\end{equation*}
that can be simplified to
\begin{equation*}
((1-\rho)\TT_{1}^2 + \rho \TT_{2}^2 - \TT_{3}^2 - \rho(1-\rho)d_{21}^2)^2
= 0.
\end{equation*}
This means that, when the receivers degenerate to an aligned
configuration, the Kummer's surface $S$ degenerates as well to the
(double) quadric surface $\sigma$.

As we can expect, the range surfaces for the aligned sensors configurations correspond to particular points in the parameter space $\mathcal{P}.$ Following the notations of Section \ref{sec:Cayley}, in such situations the parameters $ a,b,c $ assume the values
$$
(a,b,c) = \left\{
\begin{array}{ccl}
(1,-1,-1) & \text{if} & \m{1} \in r_1^0 \\
(1,1,1) & \quad\text{if}\quad & \m{2} \in r_2^0 \\
(-1,-1,1) & \text{if} & \m{3} \in r_3^0
\end{array} \right..
$$
These are the coordinates of the $3$ singular points of the Cayley's surface lying on the border of $\tilde{\mathcal{P}}$. Differently from the general case, at these points the fibers in $\mathcal{P}$ have dimension $2.$ Indeed, we need both the value of $ \rho $ and the length of $ d_{21} $ to reconstruct the position of the receivers, up to the choice of a reference frame in the Euclidean $x$--plane.
\end{rem}


\section{A look at the range--based localization in the 3D space}
\label{sec:TOA-3D}

In the previous sections of the manuscript we worked under the assumption of coplanarity of the source and the receivers. Here we remove this restriction. Let us start by adjusting the definition of the range model.
\begin{defn}\label{def:TOAmodel3D}
The statistical range model in the Euclidean 3D space is
\begin{align}
\bs{\widehat{\TT}_{3,r}}(\x) = \left(\widehat{\mathcal{T}}_{1}\left(\mb{x}\right), \dots, \widehat{\mathcal{T}}_{r}\left(\mb{x}\right)\right).
\end{align}
The deterministic part of this model is obtained by setting $\varepsilon_{i} = 0$ in $\bs{\widehat{\TT}_r}$, which gives us the range map:
\begin{center}
$\begin{matrix} \bs{\TT_{3,r}}&: \mathbb{R}^{3}& \rightarrow & \mathbb{R}^{r}
\\ &\mb{x} &\mapsto& \left(d_{1}\left(\mb{x}\right), \dots, d_{r}\left(\mb{x}\right)\right).
\end{matrix}$
\end{center}
The target set is referred to as the $\mathcal{T}$--space and we indicate its points with $\bs{\mathcal{T}}=\left(\mathcal{T}_{1}, \dots,\mathcal{T}_{r}\right)$.
\end{defn}

As for the planar case, the range--based localization is geometrically equivalent to computing the intersection of the level sets of the range functions.
\begin{defn}\label{levelsets3D}
Let $\mathcal{T} \in \mathbb{R}$. The set
\begin{align}
A_{i}\left(\mathcal{T}\right) = \left\{\mb{x} \in \mathbb{R}^{3} \vert\, d_{i}(\x) = \mathcal{T} \right\}
\end{align}
is the level set of $d_{i}(\x)$ in the $x$--space.
\end{defn}
\begin{rem}
$A_i(\TT)$ is a sphere centered at $\m{i}$ and with radius $\TT$, when $\TT > 0$. If $\ \TT < 0$ then $A_i(\TT) = \varnothing$. Finally, if $\TT = 0$ we have $A_i(0) = \{\m{i}\}$.
\end{rem}
\noindent In the noiseless scenario, given $\bs{\TT}=(\TT_1,\dots,\TT_r)$ the position of the source is one of the point in the intersection $\bigcap_{i=1}^r A_i(\TT_i)$ (see Figure \ref{fig:spheresintersections}).
\begin{figure}[htb]
\centering
   {\includegraphics[width=10cm,bb=163 330 448 461]{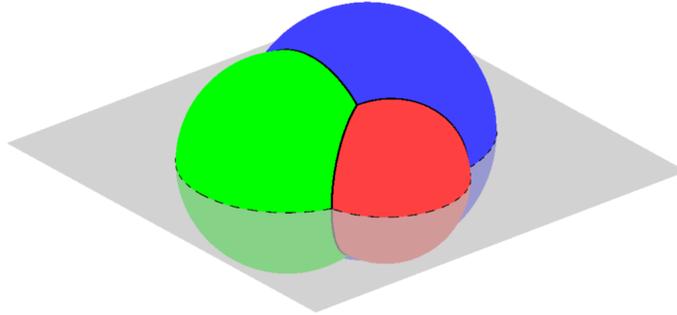}}
 \caption{The level set $A_i(\TT)$ of $\bs{\TT_{3,r}}$ is the sphere with center $\m{i}$ and radius $\TT.$ If $r=3,$ the source is placed at one of the intersection points of the three spheres. There exist another admissible source position, symmetric with respect to the grey plane $H$ containing the sensors.}\label{fig:spheresintersections}
\end{figure}

In the following we study separately the case with $r=2$ and $r=3$ sensors.

\subsection{r=2}\label{sec:3Dr2}
The component function $d_{i} (\x)$ of $\bs{\TT_{3,2}}$ is differentiable in $\RR^3 \setminus \m{i},$ for $i=1,2.$ Therefore, $\bs{\TT_{3,2}}$ is differentiable in $D=\RR^{3} \setminus \{\m{1},\m{2}\}.$ The $i$--th row of $J(\x)$ is equal to
\begin{equation}\label{eq:Jac3D}
\nabla d_{i} (\x)= \left(\frac{x-x_{i}}{d_{i}(\x)}, \frac{y-y_{i}}{d_{i}(\x)}, \frac{z-z_{i}}{d_{i}(\x)}\right) = \bs{\tilde{d}_i}(\x), \quad \ i = 1,2.
\end{equation}
\begin{thm}\label{rank3Dr2}
Let $r$ be the line through $\m{1}$, $\m{2}$ and $J(\x)$ the Jacobian matrix of $\bs{\TT_{3,2}}$ at $\mb{x} \in D$. Then
\begin{equation*}
\rank \ J(\x) = \begin{cases}
1\quad \text{if}\ \,\mb{x} \in r \cap D, \cr
2\quad \text{otherwise}. \end{cases}
\end{equation*}
\end{thm}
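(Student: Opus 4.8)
The plan is to adapt, essentially verbatim, the argument used for case (ii) of Theorem \ref{rank}, since $\bs{\TT_{3,2}}$ is obtained from the collinear instance of $\bs{\TT_3}$ merely by enlarging the ambient space from $\RR^2$ to $\RR^3$ and dropping one receiver. First I would record, using \eqref{eq:Jac3D}, that $J(\x)$ is the $2\times 3$ matrix whose two rows are the unit displacement vectors $\bs{\tilde{d}_1}(\x)$ and $\bs{\tilde{d}_2}(\x)$. Since $\x\in D$ forces $\x\neq\m{1},\m{2}$, both rows are well defined and have norm $1$, hence are nonzero; this already gives $\rank J(\x)\geq 1$ for every $\x\in D$, so the only question is whether the rank equals $1$ or $2$.

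The key step is then the linear-algebra observation that the two rows of $J(\x)$ are linearly dependent if, and only if, $\bs{\tilde{d}_1}(\x)$ and $\bs{\tilde{d}_2}(\x)$ are parallel, and — being unit vectors — this happens exactly when $\bs{\tilde{d}_1}(\x)=\pm\bs{\tilde{d}_2}(\x)$, i.e. when the (non-normalized) vectors $\mb{x}-\m{1}$ and $\mb{x}-\m{2}$ are proportional. As $\m{1}\neq\m{2}$, proportionality of these two displacement vectors is equivalent to $\m{1}$, $\m{2}$ and $\x$ lying on a common line, that is, $\x\in r\cap D$. In that case $\rank J(\x)=1$; otherwise the two displacement vectors span a $2$-dimensional subspace and $\rank J(\x)=2$. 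This yields exactly the case split in the statement.

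The one point I would be slightly careful about is that ``parallel unit vectors'' really does account for the whole of $r\setminus\{\m{1},\m{2}\}$: on the open segment between the receivers one has $\bs{\tilde{d}_1}(\x)=-\bs{\tilde{d}_2}(\x)$, while on each of the two external half-lines $\bs{\tilde{d}_1}(\x)=\bs{\tilde{d}_2}(\x)$, and in all of these sub-cases the vectors are proportional so the rank drops — and conversely two proportional displacement vectors from distinct centers $\m{1},\m{2}$ can only occur for $\x$ collinear with them. Beyond checking that, there is no real obstacle: the extra spatial dimension plays no role in the argument, and (as in the remarks following Theorem \ref{rank}) $r\cap D$ is the degeneracy locus where $\bs{\TT_{3,2}}$ fails to be locally invertible by the Inverse Function Theorem.
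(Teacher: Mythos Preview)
Your proposal is correct and follows essentially the same approach as the paper: both argue that the rows of $J(\x)$ are the unit displacement vectors $\bs{\tilde{d}_1}(\x),\bs{\tilde{d}_2}(\x)$, and that these are linearly dependent precisely when $\x$ lies on the line $r$ through $\m{1},\m{2}$. Your version simply spells out in more detail the $\pm$ dichotomy on the segment versus the external half-lines, which the paper leaves implicit.
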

\begin{proof}
Assume $\mb{x} \neq \m{i}$, for $i = 1,2$. The rank of $J(\x)$ drops if, and only if, $\bs{\tilde{d}_1}(\x)$ and $\bs{\tilde{d}_2}(\x)$ have the same direction or, equivalently, are linearly dependent. This happens if, and only if, $\mb{x}$ lies on $r.$
\end{proof}
From the above theorem we have that $\bs{\TT_{3,2}}$ is neither injective nor locally injective, because the rank of the Jacobian matrix is strictly smaller than $3$ at every point in $D.$ Now we describe the set of feasible range measurements and we give the localization formula.
\begin{thm}\label{thm:caso2mic3D}
The image of $\bs{\TT_{3,2}}$ is the unbounded polyhedron $Q_{2}$ in the $\mathcal{T}$--plane defined by the triangular inequalities ~\eqref{eq:TI}. If $\bs{\mathcal{T}}= \left(\mathcal{T}_{1}, \mathcal{T}_{2}\right) \in Q_{2}$, then
\begin{align}
\left| \bs{\TT_{3,2}}^{-1}\left(\bs{\mathcal{T}}\right) \right| = \begin{cases}
\infty \quad\text{if}\ \,\bs{\mathcal{T}} \in \mathring{Q}_{2}, \\
1 \quad\text{if}\ \,\bs{\mathcal{T}} \in \partial Q_{2}. \end{cases}
\end{align}
Moreover, the points $\x$ in the fiber $\bs{\TT_{3,2}}^{-1}(\TT_1,\TT_2)$ are
\begin{align}\label{eq:solT23D}
\mb{d_1} (\x) = \frac{d_{21}^{2}+\mathcal{T}_{1}^{2}-\mathcal{T}_{2}^{2}}{2d_{21}}\, \mb{\tilde{d}_{21}} + \sqrt{\mathcal{T}_{1}^{2} - \left(\frac{d_{21}^{2}+\mathcal{T}_{1}^{2}-\mathcal{T}_{2}^{2}}{2d_{21}} \right )^{2}} (\mb{\tilde{d}_{21}'}\cos\varphi+\mb{\tilde{d}_{21}''}\sin\varphi),
\end{align}
where $\varphi\in [0,2\pi)$ and $\{\mb{\tilde{d}_{21}},\mb{\tilde{d}_{21}'},\mb{\tilde{d}_{21}''}\}$ is an orthonormal basis of $\RR^3.$
\end{thm}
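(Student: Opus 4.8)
The plan is to follow verbatim the argument behind Theorem \ref{prop:caso2mic}, the only structural difference being that the reflection symmetry across the line $r$ which produced a two--point fiber in the plane is now a full rotational symmetry about $r$, and this is precisely what turns that fiber into a circle.

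First I would fix $(\TT_1,\TT_2)$ and record, exactly as in Section \ref{sec:r2d2} and because the level sets $A_i(\TT)$ are spheres (empty for $\TT<0$), that feasibility forces $\TT_1,\TT_2\geq 0$ and that the fiber $\bs{\TT_{3,2}}^{-1}(\TT_1,\TT_2)$ is the intersection $A_1(\TT_1)\cap A_2(\TT_2)$ of two spheres. Working in the orthonormal basis $\{\mb{\tilde{d}_{21}},\mb{\tilde{d}_{21}'},\mb{\tilde{d}_{21}''}\}$ of the statement and writing $\mb{d_1}(\x)=a\,\mb{\tilde{d}_{21}}+b\,\mb{\tilde{d}_{21}'}+c\,\mb{\tilde{d}_{21}''}$, one has $\mb{d_2}(\x)=\mb{d_1}(\x)-\mb{d_{21}}=(a-d_{21})\,\mb{\tilde{d}_{21}}+b\,\mb{\tilde{d}_{21}'}+c\,\mb{\tilde{d}_{21}''}$. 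Squaring $d_1(\x)=\TT_1$ and $d_2(\x)=\TT_2$ and subtracting, exactly as in the passage \eqref{eq:sistTOA22}, the difference equation becomes linear (it cuts out the radical plane of the two spheres) and gives
\[
a=\frac{d_{21}^{2}+\TT_1^{2}-\TT_2^{2}}{2d_{21}},
\]
while the first equation leaves $b^{2}+c^{2}=\TT_1^{2}-a^{2}$.

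Next I would observe that $\TT_1^{2}-a^{2}$ is literally the same expression whose sign was factored and analysed in Section \ref{sec:r2d2}: in the first quadrant of the $\TT$--plane, $\TT_1^{2}-a^{2}\geq 0$ is equivalent to the triangular inequalities \eqref{eq:TI}, so $\text{Im}(\bs{\TT_{3,2}})=Q_2$. For the fiber cardinality: if $\bs\TT\in\mathring{Q}_{2}$ then all of \eqref{eq:TI} hold strictly, hence $\TT_1^{2}-a^{2}>0$ and $\{(b,c):b^{2}+c^{2}=\TT_1^{2}-a^{2}\}$ is a circle of positive radius, which I parametrize by $b=\sqrt{\TT_1^{2}-a^{2}}\cos\varphi$, $c=\sqrt{\TT_1^{2}-a^{2}}\sin\varphi$, $\varphi\in[0,2\pi)$; this is exactly formula \eqref{eq:solT23D} and yields $|\bs{\TT_{3,2}}^{-1}(\bs\TT)|=\infty$. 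If instead $\bs\TT\in\partial Q_2$, exactly one inequality in \eqref{eq:TI} is an equality, forcing $\TT_1^{2}-a^{2}=0$, hence $b=c=0$ and the unique preimage $\mb{d_1}(\x)=a\,\mb{\tilde{d}_{21}}$, which lies on $r$ --- in agreement with Theorem \ref{rank3Dr2}.

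I do not expect a genuine obstacle: every step transcribes the two--receiver planar computation, and the only new ingredient is the trivial remark that a level set $\{b^{2}+c^{2}=\text{const}>0\}$ in the plane orthogonal to $\mb{\tilde{d}_{21}}$ is nonempty and infinite. The one place needing a little care is matching the strict versus non--strict cases of \eqref{eq:TI} with the interior versus boundary of $Q_2$, but that dichotomy was already settled in the proof of Theorem \ref{prop:caso2mic} and can simply be cited.
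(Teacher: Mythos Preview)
Your proposal is correct and follows essentially the same approach as the paper: reduce to the planar computation of Theorem \ref{prop:caso2mic} and replace the reflection symmetry across $r$ by the rotational symmetry about $r$ in $\RR^3$. The paper's proof is terser---it simply cites formula \eqref{eq:solT2} and invokes symmetry to upgrade the two-point fiber to a circle---whereas you unpack the same computation explicitly in the basis $\{\mb{\tilde{d}_{21}},\mb{\tilde{d}_{21}'},\mb{\tilde{d}_{21}''}\}$, but the content is identical.
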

\begin{proof}
The solutions of the localization problem in two dimensions are given by formula \eqref{eq:solT2}. In that case, for $\bs{\TT}\in\mathring{Q}_2$ we have two solutions symmetric with respect to the line $r$. In the three dimensional space, the symmetry of the problem implies that the fiber $\bs{\TT_{3,2}}^{-1}(\bs{\TT})$ is the circle described by formula \eqref{eq:solT23D}, having center on $r$ and contained in a plane orthogonal to $r.$ We have uniqueness of localization if, and only if, $\bs{\TT}\in\partial Q_2,$ or equivalently if the source $\x$ lies on $r$.
\end{proof}
\begin{rem}\label{T32toT2}
Let us assume that $\m{1},\m{2}$ are on the $x$--axis and let $\RR^2_+$ be the half--plane with $y\geq 0.$ Then we can think of $\RR^3$ as obtained by rotating $\RR^2_+$ around the $x$--axis. In particular, given a point $\mb{y}\in\RR^2_+$ we set $\Upsilon(\mb{y})$ the circle described by the rotation of $\mb{y}.$ Obviously, every $\x\in\RR^3$ belongs to only one $\Upsilon(\mb{y}),$ for a suitable $\mb{y}.$ This way, we can define the map
$$
\begin{array}{cccc}
\psi: & \RR^3 & \rightarrow & \RR^2_+\\
& \x & \mapsto & \mb{y}
\end{array}
$$
if $\x\in\Upsilon(\mb{y}).$ From Theorem \ref{thm:caso2mic3D} it follows that $\bs{\TT_{3,2}}=\bs{\TT_2}\circ\psi,$ which gives a geometric interpretation of the results contained in the theorem.
\end{rem}

\subsection{r=3}\label{sec:3Dr3}
The localization depends on the relative positions of the three sensors, namely whether they are collinear or not.
\begin{defn}
We set $H$ the smallest affine subspace of $\RR^3$ containing the receivers $\m{1},\m{2},\m{3}.$
\end{defn}
\noindent Clearly, $H$ is a line if the sensors are collinear, otherwise it is a plane. As usual, we begin with the local analysis of $\bs{\TT_{3,3}}.$
\begin{thm}\label{rank3Dr3}
The function $\bs{\TT_{3,3}}$ is differentiable in $D=\RR^{3} \setminus \{\m{1},\m{2},\m{3}\}.$ Let $J(\x)$ be the Jacobian matrix of $\bs{\TT_{3,3}}$ at $\mb{x} \in D$. We have the following:
\begin{enumerate}[label=(\roman{*}), ref=(\roman{*})]
\item
if $\m{1}$, $\m{2}$ and $\m{3}$ are not collinear, then
\begin{equation*}
\rank \ J(\x) = \begin{cases}
2\quad \text{if}\ \,\mb{x} \in  H\cap D, \cr
3\quad \text{otherwise}; \end{cases}
\end{equation*}
\item
if $\m{1}$, $\m{2}$ and $\m{3}$ are collinear, then
\begin{equation*}
\rank \ J(\x) = \begin{cases}
1\quad \text{if}\ \,\mb{x} \in H \cap D, \cr
2\quad \text{otherwise}. \end{cases}
\end{equation*}
\end{enumerate}
\end{thm}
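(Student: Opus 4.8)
The plan is to follow the pattern of Theorems \ref{rank} and \ref{rank3Dr2}. The differentiability statement is immediate, since each component $d_i(\x)=\|\x-\m{i}\|$ is smooth on $\RR^3\setminus\m{i}$; hence $\bs{\TT_{3,3}}$ is differentiable on $D=\RR^3\setminus\{\m{1},\m{2},\m{3}\}$, and there the $i$-th row of $J(\x)$ is the unit vector $\bs{\tilde{d}_i}(\x)=(\x-\m{i})/d_i(\x)$. Therefore $\rank J(\x)$ is governed by the linear dependencies among $\x-\m{1},\x-\m{2},\x-\m{3}$; all three rows are nonzero, so $1\le\rank J(\x)\le 3$ and it suffices to decide when the rank equals $3$ and, inside the degeneracy locus, when it equals $2$ rather than $1$.

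First I would compute the determinant. Subtracting the first row of $J(\x)$ from the other two and pulling out the positive common factor $1/(d_1(\x)d_2(\x)d_3(\x))$ gives
\begin{equation*}
\det J(\x)=\frac{(\x-\m{1})\cdot(\mb{d_{21}}\times\mb{d_{31}})}{d_1(\x)\,d_2(\x)\,d_3(\x)},
\end{equation*}
which up to a positive scalar is the signed volume of the tetrahedron with vertices $\x,\m{1},\m{2},\m{3}$. Hence, in case (i) the vector $\mb{d_{21}}\times\mb{d_{31}}$ is a nonzero normal to the plane $H$, so $\det J(\x)=0$ precisely when $\x\in H$; in case (ii) instead $\mb{d_{21}}\times\mb{d_{31}}=0$ and $\det J\equiv 0$. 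This separates $\rank J(\x)=3$ from $\rank J(\x)\le 2$.

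It remains to pin down the rank on the degeneracy locus. Exactly as in the proof of Theorem \ref{rank}, for $\x\in D$ the three rows $\bs{\tilde{d}_1}(\x),\bs{\tilde{d}_2}(\x),\bs{\tilde{d}_3}(\x)$ are pairwise parallel (equivalently $\rank J(\x)=1$) if and only if $\x,\m{1},\m{2},\m{3}$ all lie on a single line. In case (i) the receivers are not collinear, so this never happens; combined with the determinant computation we get $\rank J(\x)=2$ on $H\cap D$ and $\rank J(\x)=3$ on $D\setminus H$. In case (ii) the receivers lie on the line $H$, so $\rank J(\x)\le 2$ everywhere; if $\x\notin H$ then $\bs{\tilde{d}_1}(\x)$ and $\bs{\tilde{d}_2}(\x)$ cannot be parallel (otherwise $\x$ would lie on the line through $\m{1}$ and $\m{2}$, which is $H$), so $\rank J(\x)=2$, whereas on $H\cap D$ all four points are collinear and $\rank J(\x)=1$; this reproduces the behaviour of $\bs{\TT_{3,2}}$ in Theorem \ref{rank3Dr2}, with $\m{3}$ only adding a row already in the span of the first two. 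I do not anticipate a genuine obstacle: the argument reduces to the volume identity above together with the elementary fact that the rank can drop to $1$ only along the line through the receivers. The only care needed is keeping the two sub-cases separate and checking that each claimed value is attained on a nonempty set, which is read directly off the position of $H$.
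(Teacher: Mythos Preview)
Your proposal is correct and follows essentially the same approach as the paper: both reduce the rank of $J(\x)$ to the linear independence of the displacement vectors $\x-\m{i}$ and then split into the collinear and non-collinear cases. The only difference is cosmetic: you make the determinant explicit via the triple product $(\x-\m{1})\cdot(\mb{d_{21}}\times\mb{d_{31}})$, whereas the paper argues directly in geometric language (the vectors are ``coplanar'' or ``parallel'' according to whether $\x\in H$).
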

\begin{proof}
The rank of $J(\x)$ is equal to the number of its linearly independent rows, which have the form \eqref{eq:Jac3D}. Thus, we have to study the number of linearly independent vectors $\bs{\tilde{d}_i}(\x),\ i=1,2,3.$ We have the following cases.
\begin{enumerate}[label=(\roman{*}), ref=(\roman{*})]
\item
Let $\m{1}$, $\m{2}$ and $\m{3}$ be not collinear. If $\x$ lies on $H,$ the three displacement vector are coplanar, therefore $\rank \ J(\x) = 2.$ Otherwise, the three vectors are independent and the rank of $J(\x)$ does not drop.
\item
Let $\m{1}$, $\m{2}$ and $\m{3}$ be on the line $H.$ If also $\x\in H,$ the three displacements vectors are parallel and $\rank \ J(\x) = 1.$ Otherwise, the three vectors are coplanar and $\rank \ J(\x) = 2.$
\end{enumerate}
\end{proof}
From the above theorem, we have that if the receivers are not collinear, then the map $\bs{\TT_{3,3}}$ is locally injective at every $\x$ not lying on the plane $H.$ On the other hand, $\bs{\TT_{3,3}}$ is injective only on $H$, because the localization is symmetric with respect to $H$ (see Figure \ref{fig:spheresintersections}). Finally, in the collinear configuration of the receivers the localization behaves very similar to the case with $r=2.$ Now we give the main results of the section, i.e. the description of $\mbox{Im}(\bs{\TT_{3,3}})$ and the inverse range map.
\begin{thm}\label{thm:caso3mic3D}
Let $\m{1},\m{2},\m{3} $ be non collinear. Then $\mbox{Im}(\bs{\TT_{3,3}})$ is the domain in the $\TT$--space with boundary $\mbox{Im}(\bs{\TT_{3}})$ and containing all the points that verify the inequality
\begin{equation}\label{solido-t3,3}
\begin{split}
 & d_{32}^2 \mathcal{T}_1^4 + d_{31}^2 \mathcal{T}_2^4 + d_{21}^2 \mathcal{T}_3^4- 2 \mb{d_{32}} \cdot \mb{d_{31}} \mathcal{T}_1^2 \mathcal{T}_2^2
+2 \mb{d_{32}} \cdot \mb{d_{21}} \mathcal{T}_1^2 \mathcal{T}_3^2 -2 \mb{d_{31}} \cdot \mb{d_{21}} \mathcal{T}_2^2 \mathcal{T}_3^2 -\\
& -2 \mb{d_{21}} \cdot \mb{d_{31}} d_{32}^2 \mathcal{T}_1^2 + 2 \mb{d_{32}} \cdot \mb{d_{21}} d_{31}^2\mathcal{T}_2^2 - 2 \mb{d_{32}} \cdot \mb{d_{31}} d_{21}^2\mathcal{T}_3^2+ d_{21}^2 d_{31}^2 d_{32}^2 \leq 0.
\end{split}
\end{equation}
If $\bs{\TT} = (\TT_1,\TT_2,\TT_3) \in \mbox{Im}(\bs{\TT_{3,3}})$, then
$$
\vert \bs{\TT_{3,3}}^{-1}(\bs{\TT}) \vert =
\left\{ \begin{array}{cl}
2 & \mbox{ if }\ \bs{\TT} \in \mbox{Im}(\bs{\TT_{3,3}}) \setminus  \mbox{Im}(\bs{\TT_{3}}) \\
1 & \mbox{ if }\ \bs{\TT} \in  \mbox{Im}(\bs{\TT_{3}}) \end{array} \right. .
$$
\end{thm}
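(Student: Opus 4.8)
The plan is to reduce the three-dimensional problem to the planar one of Section~\ref{sec:2Dr3} by folding $\RR^3$ onto the plane $H$ spanned by the three non collinear receivers, and then to exploit a hidden linearity of the quartic~\eqref{eq:imm2} along the direction $(1,1,1)$. Concretely, I would fix a unit normal $\mb{n}$ to $H$ and write every $\x\in\RR^3$ uniquely as $\x=\bar\x+h\,\mb{n}$ with $\bar\x\in H$ and $h\in\RR$. Since $\m{1},\m{2},\m{3}\in H$, the Pythagorean theorem gives $d_i(\x)^2=d_i(\bar\x)^2+h^2$, where $d_i(\bar\x)$ now denotes the planar distance inside $H\cong\RR^2$. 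Setting $s=h^2\ge 0$, a point $\bs{\TT}=(\TT_1,\TT_2,\TT_3)$ with all $\TT_i\ge 0$ then lies in $\text{Im}(\bs{\TT_{3,3}})$ if and only if there exists $s\in[0,m]$, $m:=\min_i\TT_i^2$, such that $\left(\sqrt{\TT_1^2-s},\sqrt{\TT_2^2-s},\sqrt{\TT_3^2-s}\right)\in\text{Im}(\bs{\TT_3})$; and, because the planar receivers are not collinear, the corresponding $\bar\x$ is then unique (it is given by formula~\eqref{eq:invmap}), so the preimage consists of the points $\bar\x\pm\sqrt{s}\,\mb{n}$. By Theorem~\ref{th:1oct} the displayed membership is equivalent to $P(\TT_1^2-s,\TT_2^2-s,\TT_3^2-s)=0$, where $P(X,Y,Z)$ denotes the left-hand side of~\eqref{solido-t3,3} read as a quadratic polynomial in $X=\TT_1^2$, $Y=\TT_2^2$, $Z=\TT_3^2$ (that is, equation~\eqref{eq:imm2}).

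The crucial step will be to observe that $s\mapsto Q(s):=P(\TT_1^2-s,\TT_2^2-s,\TT_3^2-s)$ is an \emph{affine} function of $s$. Indeed, the symmetric matrix of the quadratic part of $P$ annihilates $(1,1,1)$, which follows immediately from the identity $\mb{d_{32}}-\mb{d_{31}}+\mb{d_{21}}=\mb{0}$; and a short computation of the linear part of $P$ gives the constant slope $Q'(s)\equiv 4\,\Vert\mb{d_{21}}\wedge\mb{d_{31}}\Vert^2$, which is strictly positive precisely because the receivers are not collinear. Hence $Q$ is strictly increasing, and so it has a (necessarily unique) zero in $[0,m]$ if and only if $Q(0)\le 0\le Q(m)$. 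Here $Q(0)=P(\TT_1^2,\TT_2^2,\TT_3^2)$ is the left-hand side of~\eqref{solido-t3,3}, while $Q(m)$ is the value of $P$ at a point with one coordinate equal to $0$; and the restriction of $P$ to each of the coordinate planes $\{X=0\}$, $\{Y=0\}$, $\{Z=0\}$ is a quadratic with positive definite quadratic part (of $2\times 2$ matrix of determinant $\Vert\mb{d_{21}}\wedge\mb{d_{31}}\Vert^2>0$) attaining its minimum value $0$ at the image of the opposite receiver — e.g.\ at $(d_{21}^2,d_{31}^2)$, corresponding to $\bs{\TT_3}(\m{1})$, on $\{X=0\}$ — so $Q(m)\ge 0$ unconditionally. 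It will then follow that $\bs{\TT}\in\text{Im}(\bs{\TT_{3,3}})$ if and only if $\TT_i\ge 0$ for every $i$ and the inequality~\eqref{solido-t3,3} holds; equivalently, in the coordinates $(\TT_1^2,\TT_2^2,\TT_3^2)$ the image is obtained by sweeping $\text{Im}(\bs{\TT_3})$ along the ray $\RR^+(1,1,1)$.

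To read off the boundary and the fibre counts I would argue as follows. In the coordinates $(\TT_1^2,\TT_2^2,\TT_3^2)$, moving a point with $P=0$ in the direction $+(1,1,1)$ keeps all coordinates nonnegative and makes $P$ strictly negative, while moving it in the opposite direction makes $P$ positive; hence the set $\{\TT_i\ge 0\}\cap\{P(\TT_1^2,\TT_2^2,\TT_3^2)=0\}$, which by Theorem~\ref{th:1oct} is $\text{Im}(\bs{\TT_3})$, is exactly the topological boundary of $\text{Im}(\bs{\TT_{3,3}})$; its only contacts with the coordinate hyperplanes are the three points $\bs{\TT_3}(\m{i})$, already lying on $\text{Im}(\bs{\TT_3})$, since on each such hyperplane $P$ vanishes only there. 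Finally, the unique solution $s^\ast=-Q(0)/Q'(0)$ vanishes if and only if $Q(0)=0$, i.e.\ if and only if $\bs{\TT}\in\text{Im}(\bs{\TT_3})$: in that case the source $\bar\x$ lies on $H$ and is the only one, so the fibre has a single point, whereas when $s^\ast>0$ the two sources $\bar\x\pm\sqrt{s^\ast}\,\mb{n}$ are distinct and the fibre has cardinality $2$.

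I expect the main obstacle to be this hidden linearity of $P$ along $(1,1,1)$ together with the sign bookkeeping that rests on it: checking $\mb{d_{32}}-\mb{d_{31}}+\mb{d_{21}}=\mb{0}$, evaluating the slope $Q'$, and analysing the restrictions of $P$ to the coordinate planes, so as to conclude both that $Q'>0$ and that $Q(m)\ge 0$. These computations are elementary, but the equivalence with~\eqref{solido-t3,3} and the correctness of the fibre count rest entirely on getting these two signs right.
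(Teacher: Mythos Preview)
Your argument is correct and complete; it is also a genuinely different route from the paper's.

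The paper works in the four--dimensional Minkowski space $\RR^{3,1}$, rewriting the localization system~\eqref{eq:sys8-1} as the intersection of three hyperplanes and a null cone (system~\eqref{eq:sys8-2}). The three hyperplanes meet in a line $l(\bs{\TT})$ with direction $\mb{v}=\ast(\mb{d_{21}}\wedge\mb{d_{31}}\wedge\mb{e_4})$, which happens to be the normal to $H$; substituting its parametric description into the cone yields a quadratic equation $t^2=-P(\TT_1^2,\TT_2^2,\TT_3^2)/(4\Vert\mb{d_{21}}\wedge\mb{d_{31}}\Vert^4)$, whence inequality~\eqref{solido-t3,3} is precisely the reality condition $t^2\ge 0$, and the fibre count follows from $t^*=-t^*$ iff $t^*=0$.

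You instead bypass the exterior--algebra machinery entirely: orthogonal decomposition plus Pythagoras reduce the 3D problem to the already--solved 2D one (Theorem~\ref{th:1oct}), and the analysis is then pure algebra on the quartic $P$ viewed as a quadratic in $(\TT_1^2,\TT_2^2,\TT_3^2)$. Your two key observations---that the Gram--type matrix of $P$ annihilates $(1,1,1)$ because $\mb{d_{32}}-\mb{d_{31}}+\mb{d_{21}}=\mb{0}$, and that the restriction of $P$ to each coordinate plane is a positive--definite quadratic with minimum $0$ at a node of the Kummer---are exactly what is needed; the second is neatly explained by the fact that the affine nodes $\bs{\TT_3}(\m{i})$ are \emph{singular} points of the quartic in $\bs{\TT}$, forcing $\partial_Y P=\partial_Z P=0$ there on the slice $\{X=0\}$. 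The two proofs are really the same computation in different clothing: your parameter $s=h^2$ is, up to a factor $\Vert\mb{d_{21}}\wedge\mb{d_{31}}\Vert^2$, the paper's $t^2$, and the affinity of your $Q(s)$ is the reason their equation in $t$ collapses to a pure square. What your approach buys is elementarity and a transparent reduction to Section~\ref{sec:2Dr3}; what the paper's approach buys is consistency with the Minkowski formalism used throughout, which later makes the comparison with the TDOA model in Section~\ref{sec:TDOAvsTDOA} immediate.
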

\begin{proof}
In this proof, we use the exterior algebra formalism over the four dimensional Minkowski space $\RR^{3,1}.$ Therefore, we preliminarily adapt the definitions and notation set in Section \ref{sec:MK} for $\RR^{2,1}.$
Let $ \mb{e_1}, \mb{e_2}, \mb{e_3}$ be an orthonormal basis of the Euclidean space containing source and receivers, where we choose $\mb{e_1}$ and $\mb{e_2}$ parallel to $H.$
By adding the unit vector $\mb{e_4}$ parallel to the time direction, we get the orthonormal basis $ \mb{e_1}, \mb{e_2}, \mb{e_3}, \mb{e_4}$ of $\mathbb{R}^{3,1}.$
For any two vectors $ \mb{u} = u_1 \mb{e_1} + \dots + u_4 \mb{e_4}$ and $\mb{v} = v_1 \mb{e_1} + \dots + v_4 \mb{e_4},$ their inner product in $\mathbb{R}^{3,1}$ is
$$
\langle \mb{u} , \mb{v} \rangle = u_1 v_1 + u_2 v_2 + u_3 v_3 - u_4 v_4.
$$
Consequently, the associated norm is $ \Vert \mb{v} \Vert^2 = v_1^2 + v_2^2 + v_3^3 - v_4^2.$ As usual, we denote with the symbol $\cdot$ the product of two space vectors. If we fix the volume form $ \bs{\omega} = \mb{e_1} \wedge \mb{e_2} \wedge \mb{e_3} \wedge \mb{e_4},$ then the Hodge operator is defined on the natural basis of the exterior algebra as
\begin{equation*}
\begin{array}{c}
\ast 1=\bs\omega,\qquad\ast\bs\omega=-1,\\
\ast\mb{e_1}=\mb{e_2}\wedge\mb{e_3}\wedge\mb{e_4},\quad
\ast\mb{e_2}=\mb{e_1}\wedge\mb{e_3}\wedge\mb{e_4},\quad
\ast\mb{e_3}=\mb{e_1}\wedge\mb{e_2}\wedge\mb{e_4},\quad
\ast\mb{e_4}=-\mb{e_1}\wedge\mb{e_2}\wedge\mb{e_3},\\
\ast(\mb{e_1}\wedge\mb{e_2})=\mb{e_3}\wedge\mb{e_4},\quad
\ast(\mb{e_1}\wedge\mb{e_3})=-\mb{e_2}\wedge\mb{e_4},\quad
\ast(\mb{e_1}\wedge\mb{e_4})=-\mb{e_2}\wedge\mb{e_3},\\
\ast(\mb{e_2}\wedge\mb{e_3})=\mb{e_1}\wedge\mb{e_4},\quad
\ast(\mb{e_2}\wedge\mb{e_4})=\mb{e_1}\wedge\mb{e_3},\quad
\ast(\mb{e_3}\wedge\mb{e_4})=-\mb{e_1}\wedge\mb{e_2},\\
\ast(\mb{e_1}\wedge\mb{e_2}\wedge\mb{e_3})=-\mb{e_4},\quad
\ast(\mb{e_1}\wedge\mb{e_2}\wedge\mb{e_4})=\mb{e_3},\quad
\ast(\mb{e_1}\wedge\mb{e_3}\wedge\mb{e_4})=\mb{e_2},\quad
\ast(\mb{e_2}\wedge\mb{e_3}\wedge\mb{e_4})=\mb{e_1}.
\end{array}
\end{equation*}

A point in $\mathbb{R}^{3,1}$ has four coordinates: the first three ones specify the spatial position of the point, while the last one is the time coordinate.
Given $\bs{\mathcal{T}}=(\mathcal{T}_1,\mathcal{T}_2,\mathcal{T}_3)$ in the $\mathcal{T}$--space, we define the spacetime position of the receivers as
$
\mb{M_i}\left(\bs{\mathcal{T}}\right) = \left(x_{i},y_{i},0,\mathcal{T}_{i}\right),\ i = 1,2,3,
$
while $\mb{X} = \left(x,y,z,\mathcal{T}\right)$ is the generic point of $\mathbb{R}^{3,1}.$ The displacement vectors are then
$$
\mb{D_i}(\X,\bs{\TT}) = \X - \mb{M_i}(\bs{\TT})
\qquad\text{and}\qquad
\mb{D_{ji}}(\X,\bs{\TT}) = \mb{M_j}(\bs{\TT}) - \mb{M_i}(\bs{\TT}),
\qquad\text{for}\ i,j = 1,2,3\ \text{and}\ i \neq j.
$$

In this setting, to localize the source we have to solve the system
\begin{equation}\label{eq:sys8-1}
\left\{\begin{array}{l}
\Vert\mb{D_1}(\X,\bs{\TT})\Vert^2 = 0 \\
\Vert\mb{D_2}(\X,\bs{\TT})\Vert^2 = 0 \\
\Vert\mb{D_3}(\X,\bs{\TT})\Vert^2 = 0 \\
\langle\mb{D_1}(\X,\bs{\TT}), \mb{e_4}\rangle = \TT_1
\end{array}\right..
\end{equation}
Through the same computations made in Section \ref{sec:MK}, we arrive to
\begin{equation}\label{eq:sys8-2}
\left\{\begin{array}{l}
\Vert\mb{D_1}(\X,\bs{\TT})\Vert^2 = 0 \\
i_{\mb{D_1}(\X,\bs{\TT})}\mb{D_{21}}(\bs{\TT}) =
\frac 12 \Vert \mb{D_{21}}(\bs{\TT}) \Vert^2 \\
i_{\mb{D_1}(\X,\bs{\TT})}\mb{D_{31}}(\bs{\TT}) =
\frac 12 \Vert \mb{D_{31}}(\bs{\TT}) \Vert^2 \\
i_{\mb{D_1}(\X,\bs{\TT})}\mb{e_4} = \TT_1
\end{array}\right..
\end{equation}
Since the sensors are not aligned, in the last rows of system \eqref{eq:sys8-2} we have three independent linear equations. Therefore, they define three hyperplanes in $\RR^{3,1}$ intersecting along the line $l(\bs{\TT})$ of equation:
\begin{equation*}
\begin{array}{lcl}
i_{\mb{D_1}(\mb{X},\bs{\TT})}(\mb{D_{21}}(\bs{\TT})^\flat\wedge\mb{D_{31}}(\bs{\TT})^\flat\wedge\mb{e_4}^\flat)
& = & \frac{1}{2}\left(\Vert\mb{D_{21}}(\bs{\TT})\Vert^2\mb{D_{31}}(\bs{\TT})^\flat -\Vert\mb{D_{31}}(\bs{\TT})\Vert^2\mb{D_{21}}(\bs{\TT})^\flat\right)\wedge\mb{e_4}^\flat +\\[1mm]
&& +\TT_1\, \mb{D_{21}}(\bs{\TT})^\flat\wedge\mb{D_{31}}(\bs{\TT})^\flat.
\end{array}
\end{equation*}

A vector $\mb{v}$ is parallel to $l(\bs{\TT})$ if it satisfies
\begin{align*}
i_{\mb{v}}(\mb{D_{21}}(\bs{\TT})^\flat\wedge\mb{D_{31}}(\bs{\TT})^\flat\wedge\mb{e_4}^\flat) = 0.
\end{align*}
A solution is
\begin{align*}
\mb{v} = \ast \left( \mb{D_{21}}(\bs{\TT}) \wedge \mb{D_{31}}(\bs{\TT}) \wedge \mb{e_4} \right) = \ast ( \mb{d_{21}} \wedge \mb{d_{31}} \wedge \mb{e_4} ).
\end{align*}

In order to obtain a point $\mb{l_0}(\bs{\TT})\in l(\bs{\TT}),$ we take the intersection between $l(\bs{\TT})$ and the hyperplane $H$, whose equation is $i_{\mb{D_1}(\X,\bs{\TT})}\mb{e_3} = 0$. If we set
$$
\bs{\Omega} =\mb{D_{21}}(\bs{\TT}) \wedge \mb{D_{31}}(\bs{\TT}) \wedge \mb{e_3} \wedge \mb{e_4}=
\mb{d_{21}} \wedge \mb{d_{31}}  \wedge \mb{e_3} \wedge \mb{e_4},
$$
then the equation for $\mb{D_1}(\mb{l_0}(\bs{\TT}))$ is
\begin{equation*}
\begin{array}{lcl}
i_{\mb{D_1}(\mb{l_0}(\bs{\TT}))} \bs{\Omega}^\flat & = &
\frac{1}{2}\left(\Vert\mb{D_{21}}(\bs{\TT})\Vert^2\mb{D_{31}}(\bs{\TT})^\flat -\Vert\mb{D_{31}}(\bs{\TT})\Vert^2\mb{D_{21}}(\bs{\TT})^\flat\right)\wedge\mb{e_3}^\flat\wedge\mb{e_4}^\flat -\\[1mm]
&&-\TT_1\, \mb{D_{21}}(\bs{\TT})^\flat\wedge\mb{D_{31}}(\bs{\TT})^\flat\wedge\mb{e_3}^\flat.
\end{array}
\end{equation*}
Hence
\begin{equation*}
\begin{array}{lcll}
\mb{D_1}(\mb{l_0}(\bs{\TT})) &=& {\displaystyle\frac 1{2 \ast \bs\Omega}} 
& \left\{ \Vert \mb{D_{21}}(\bs{\TT}) \Vert^2 \ast\left( \mb{d_{31}} \wedge \mb{e_3} \wedge \mb{e_4} \right) - \Vert \mb{D_{31}}(\bs{\TT}) \Vert^2 \ast\left( \mb{d_{21}} \wedge \mb{e_3} \wedge \mb{e_4} \right) - \right. \\[1mm]
&&& \left. - 2 \mathcal{T}_1 \ast\left( \mb{D_{21}}(\bs{\TT}) \wedge \mb{D_{31}}(\bs{\TT}) \wedge \mb{e_3} \right) \right\}.
\end{array}
\end{equation*}

With straightforward computations, we have
$$
\mb{D_1}(\mb{l_0}(\bs{\TT})) = -\frac{\ast[((d_{31}^2 + \mathcal{T}_1^2 - \mathcal{T}_3^2) \mb{d_{21}} - (d_{21}^2 + \mathcal{T}_1^2 - \mathcal{T}_2^2) \mb{d_{31}})\wedge\mb{e_3}\wedge\mb{e_4}]}{2\ast\bs\Omega}
- \mathcal{T}_1 \mb{e_4},
$$
By substituting the parametric description of $l(\bs\TT)$ in the first equation of system (\ref{eq:sys8-2}), we get the quadratic equation $\Vert \mb{D_1}(\mb{l_0}(\bs{\TT})) + t \mb{v} \Vert^2 = 0$ in the real parameter $t:$
\begin{equation*} \begin{split}
t^2 =& -\frac{1}{4 \Vert \mb{d_{21}} \wedge \mb{d_{31}} \Vert^4} \left( d_{32}^2 \mathcal{T}_1^4 + d_{31}^2 \mathcal{T}_2^4 + d_{21}^2 \mathcal{T}_3^4- 2 \mb{d_{32}} \cdot \mb{d_{31}} \mathcal{T}_1^2 \mathcal{T}_2^2
+2 \mb{d_{32}} \cdot \mb{d_{21}} \mathcal{T}_1^2 \mathcal{T}_3^2- \right. \\ & \left. -2 \mb{d_{31}} \cdot \mb{d_{21}} \mathcal{T}_2^2 \mathcal{T}_3^2
 -2 \mb{d_{21}} \cdot \mb{d_{31}} d_{32}^2 \mathcal{T}_1^2 + 2 \mb{d_{32}} \cdot \mb{d_{21}} d_{31}^2\mathcal{T}_2^2 - 2 \mb{d_{32}} \cdot \mb{d_{31}} d_{21}^2\mathcal{T}_3^2+ d_{21}^2 d_{31}^2 d_{32}^2 \right).
\end{split} \end{equation*}
If inequality \eqref{solido-t3,3} is strictly satisfied, the above equation has two real opposite solutions $t^*$ and $-t^*.$ If \eqref{solido-t3,3} is satisfied as an equality, the point $\bs\TT$ lies on Im($\bs{\TT_3}$) and the two solutions of the equation are coincident.
\end{proof}

\begin{rem}
As shown in Figure \ref{fig:spheresintersections}, the two ranges $ \TT_1, \TT_2$ define two spheres in the $x$--space that meet along a circle with center on the line $r_3$ through $\m{1},\m{2},$ suitable radius, and contained in a plane orthogonal to $r_3$. The third range $ \TT_3 $ fixes another sphere with center at $ \m{3}.$ There exist exactly two values $\{\TT_m,\TT_M\}$ of $ \TT_3 $ such that the circle and the sphere meet at only a point. In these cases $t^*=0,$ i.e the source lies on the plane $H$ and its position is given by $\mb{D_1}(\mb{l_0}(\bs{\TT})).$ Moreover, $\bs\TT=(\TT_1,\TT_2,\TT_3)\in\text{Im}(\bs{\TT_3}).$
For $\TT_m<\TT_3<\TT_M,$ there are two intersection points at $\mb{D_1}(\mb{l_0}(\bs{\TT})) \pm t^* \mb{v}$, symmetric with respect to $H,$ and $\bs\TT\in\mbox{Im}(\bs{\TT_{3,3}}) \setminus  \mbox{Im}(\bs{\TT_{3}}).$ The explained construction can be interpreted in terms of Figure \ref{fig:TOAsurf}: the lines parallel to the $ \TT_3$--axis cut $ \mbox{Im}(\bs{\TT_3}) $ at two points, and the segment they identify is contained in $ \mbox{Im}(\bs{\TT_{3,3}}).$
\end{rem}

Now, we assume that $ \m{1},\m{2},\m{3} $ are collinear, that is to say, $ H $ is a line. For geometrical reasons, the localization problem is symmetric around $H.$ The three TOAs have to satisfy equation \eqref{eq:ineqboh}. This condition means that the three spheres have a common circle. In order to characterize the TOA map, the crucial observation is $\bs{\TT_{3,3}}(\x)=(\bs{\TT_{3,2}}(\x),\varphi(\bs{\TT_{3,2}}(\x)),$ where $\varphi$ was defined in Proposition \ref{headache}. Then, Theorem \ref{thm:caso2mic3D} applies with the straightforward changes.


\section{The range difference model: a brief description}
\label{sec:TDOA-summary}

As said in the Introduction, the last two sections of this manuscript are devoted to the comparison of the two deterministic models behind range and range difference based source localization. For the convenience of the reader, in this section we give a brief summary of the analysis contained in \cite{Compagnoni2013a}.
More specifically, we summarize the main properties of the model describing the range difference--based localization in the noiseless scenario with
three receivers and a coplanar source. In particular, we emphasize the similarities and the differences with respect to the range--based localization.

As for $\bs{\TT_3}$, we work with three receivers $\m{1}$, $\m{2}$ and $\m{3}$ placed at known positions in the $x$--plane. Unlike the range case, this is the minimal number of sensors necessary for uniquely locating the source, at least locally. After choosing $\m{3}$ as the reference receiver\footnote{According to our choice of the reference receiver, we change to $3$ every subscript $0$ appearing in \cite{Compagnoni2013a}. As a consequence, we have to correct signs in formulas whenever necessary.}, we define the range differences with respect to $\m{3}$ as the pseudoranges (i.e. the range
differences)
\begin{equation}\label{eq:TDOAdef}
\tau_{i}(\x) = d_{i}\left(\mb{x}\right) - d_{3}\left(\mb{x}\right), \
\ i=1,2.
\end{equation}
The goal of the range difference--based localization is to infer the source position $\x$ from the two measured range differences $(\tau_1,\tau_2).$ As explained in \cite{Compagnoni2013a}, we do not consider the third range difference between $\m{1}$ and $\m{2},$ because in the noiseless scenario this one is linearly dependent on the other two.

As for the range case, the range difference localization can be nicely interpreted in terms of intersections of real conics. As done for Definition \ref{levelsets}, we have the following construction (see
Definitions \ref{def:rettecerchio} and \ref{def:allineati} for the notation).\footnote{In order to avoid confusion with the range notation, we call $B_i(\bt)$ the set that in \cite{Compagnoni2013a} is called $A_i(\bt)$.}
\begin{defn}[$\!\!$\cite{Compagnoni2013a}, Definition 3.3]\label{TDOA-lev-set}
Given $ \boldsymbol{\tau} = (\tau_1,\tau_2)\in\RR^2,$ the level sets of the pseudoranges \eqref{eq:TDOAdef} are
$$
B_i(\boldsymbol{\tau}) = \{ \mb{x} \ \vert \ d_{i}\left(\mb{x}\right) -
d_{3}\left(\mb{x}\right) = \tau_{i} \},\ i=1,2.
$$
\end{defn}

\begin{lemma}[$\!\!$\cite{Compagnoni2013a}, Lemma 3.4]\label{LemmaTriang}
If $ \vert \tau_i \vert > d_{3i},$ then $ B_{i}(\boldsymbol{\tau})
= \emptyset.$ Moreover, if $ 0 < \vert \tau_i \vert < d_{3i},$
then $ B_{i}(\boldsymbol{\tau}) $ is the branch of hyperbola with
foci $ \m{i},\m{3} $ and parameter $ \tau_i,$ while
$$
B_{i}(\bt) = \left\{
\begin{array}{lcl}
r_{j}^+ & \text{if} & \tau_i = d_{3i}\\
r_{j}^- & \text{if} & \tau_i = -d_{3i}\\
a_{j} & \text{if} & \tau_i = 0
\end{array}\right.,
$$
where $ j \not= i, \{ i, j \} = \{ 1, 2 \},$ and $ a_{j} $ is the
line that bisects the line segment $ r_{j}^0.$
\end{lemma}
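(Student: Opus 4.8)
The plan is to identify $B_i(\bt)$ with the classical bifocal locus: the set of points $\x$ whose \emph{signed} difference of distances to the two fixed foci $\m{i}$ and $\m{3}$ equals the constant $\tau_i$. Fix $i\in\{1,2\}$ and let $j$ be the other index, so that by the conventions of Definition~\ref{def:rettecerchio} the line $r_j$ is exactly the line through $\m{i}$ and $\m{3}$ and $r_j^0$ is the closed segment joining them. The whole argument is then a case analysis driven by the triangle inequality applied to the (possibly degenerate) triangle with vertices $\x$, $\m{i}$, $\m{3}$.

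First I would record the basic bound $|d_i(\x)-d_3(\x)|\le\|\m{i}-\m{3}\|=d_{3i}$ for every $\x\in\RR^2$, with equality if and only if $\x$, $\m{i}$, $\m{3}$ are collinear and one of $\m{i},\m{3}$ lies between $\x$ and the other. The regime $|\tau_i|>d_{3i}$ is then immediate: no point can satisfy $d_i(\x)-d_3(\x)=\tau_i$, so $B_i(\bt)=\varnothing$. When $|\tau_i|=d_{3i}$, the equality case of the triangle inequality pins $\x$ to one of the two half-lines of $r_j$ exterior to $r_j^0$, and the sign of $\tau_i=d_i(\x)-d_3(\x)$ tells which of the two receivers is the near focus and hence which half-line is selected, producing $r_j^{+}$ for $\tau_i=d_{3i}$ and $r_j^{-}$ for $\tau_i=-d_{3i}$ (the precise sign-to-half-line correspondence being the one fixed by Definition~\ref{def:rettecerchio} together with the sign convention recalled in the footnote). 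Finally $\tau_i=0$ reduces to $d_i(\x)=d_3(\x)$, whose solution set is the perpendicular bisector $a_j$ of $r_j^0$.

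The substantive case is $0<|\tau_i|<d_{3i}$. Here I would place Cartesian axes with $\m{i}$ and $\m{3}$ symmetric about the origin on one coordinate axis, write $d_i(\x)-d_3(\x)=\tau_i$, isolate a single square root, and square twice; the routine simplification yields the equation of a hyperbola with foci $\m{i},\m{3}$ and real semi-axis $|\tau_i|/2$, the relation $b^2=(d_{3i}^2-\tau_i^2)/4>0$ confirming that it is genuinely a hyperbola. The one point needing care is that squaring is not reversible: one must check that the original signed equation $d_i(\x)-d_3(\x)=\tau_i$ singles out exactly the branch lying on the side of the focus from which the \emph{smaller} distance is measured --- the branch around $\m{3}$ when $\tau_i>0$ and around $\m{i}$ when $\tau_i<0$ --- and that this branch is nonempty, its vertex lying strictly between $\m{3}$ and $\m{i}$ on $r_j^0$. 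This branch-selection bookkeeping, and the matching of the degenerate loci to the $r_j^{\pm}$ and $a_j$ of Definition~\ref{def:rettecerchio} with the correct signs, is the only genuinely delicate part of the proof; everything else is the classical computation underlying the bifocal description of conics.
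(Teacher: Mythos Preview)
The paper does not prove this lemma at all: it is quoted verbatim from \cite{Compagnoni2013a}, Lemma~3.4, with only the relabelling $0\to 3$ of the reference receiver, and no proof is given here. So there is nothing to compare your argument against in the present manuscript.

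Your argument is the standard one and is correct. The triangle inequality $|d_i(\x)-d_3(\x)|\le d_{3i}$ with its equality characterisation handles the degenerate cases, and the square-twice computation in adapted coordinates gives the hyperbola in the generic regime; your remark that one must then identify which branch survives the squaring is exactly the point. One caution on the bookkeeping you flag as ``delicate'': the assignment $\tau_i=d_{3i}\Rightarrow r_j^{+}$ in the lemma as printed here is tied to the conventions of \cite{Compagnoni2013a} (where the reference receiver was $\m{0}$), and the footnote warns that signs must be corrected after the relabelling $0\to 3$. If you chase the conventions of Definition~\ref{def:rettecerchio} through Remark~\ref{rm:tropi} (equations \eqref{eq:ip34}), you will find $\bs{\TT_3}(r_2^{\mp})$ lies on $\TT_1-\TT_3=\pm d_{31}$, i.e.\ $\tau_1=d_{31}$ corresponds to $r_2^{-}$, not $r_2^{+}$. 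So the sign-to-half-line match you would obtain from Definition~\ref{def:rettecerchio} alone differs from the statement as printed; this is an artefact of the relabelling, not an error in your reasoning.
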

\noindent From a geometric standpoint, the psuedorange localization problem is equivalent to find the intersection between the two level sets $B_{1}(\bt),B_{2}(\bt).$ By Lemma \ref{LemmaTriang}, for the general case we have to consider the intersection of two branches of hyperbolas. This fact makes the range difference analysis more complicated than the one for the ranges, which is based on the intersection of circles.

As usual, the relevant data can be encoded into a unique vector map.
\begin{defn}[$\!\!$\cite{Compagnoni2013a}, Definition 2.4] \label{def:TDOA-map}
The map from the position of the source in the $x$--plane to the
defined pseudoranges
\begin{center}
$\begin{matrix} \bs{\tau_2}&: \mathbb{R}^{2}& \rightarrow &
\mathbb{R}^{2}
\\ &\mb{x} &\mapsto& \left(\tau_{1}\left(\mb{x}\right), \tau_{2}\left(\mb{x}\right)\right)
\end{matrix}$
\end{center}
is called the range difference map. The target set is referred to as the $\tau$--plane.
\end{defn}
\noindent The range difference map is the main object of the analysis contained in \cite{Compagnoni2013a}. In particular, its local and global invertibility and its image have been studied in full details .

About the local invertibility, we have the following result.
\begin{thm}[$\!\!$\cite{Compagnoni2013a}, Theorem 3.2]
Let $ J(\mb{x}) $ be the Jacobian matrix of $ \bs{\tau_2} $ at $
\mb{x} \not= \m{1},\m{2}, \m{3}.$ Then,
\begin{enumerate}
\item if $ \m{1},\m{2}, \m{3} $ are not collinear,
then
$$
\rank(J(\mb{x})) = \left\{ \begin{array}{cl} 1 & \mbox{if }
\mb{x} \in \left( \cup_{i=1}^3 (r_i^- \cup r_i^+) \right)
\\ 2 & \mbox{otherwise};
\end{array} \right.\ ;
$$
\item if $ \m{1},\m{2}, \m{3} $ are contained in a
line $r$, with $ \mb{m}_{3}\in r_3^0,$
then
$$
\rank(J(\mb{x})) = \left\{ \begin{array}{cl} 0 & \mbox{if }
\mb{x} \in r\setminus r_3^0 \\ 1 &
\mbox{if } \mb{x}
\in r_3^0 \\
2 & \mbox{otherwise} \end{array} \right.\ .
$$
\end{enumerate}
\end{thm}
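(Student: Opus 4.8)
The plan is to exploit that $\bs{\tau_2}$ maps $\RR^2$ to $\RR^2$, so its Jacobian $J(\x)$ is $2\times 2$ and its rank is entirely controlled by the single scalar $\det J(\x)=\ast(\nabla\tau_1(\x)\wedge\nabla\tau_2(\x))$, which here has a transparent geometric meaning. First I would record, exactly as in Section \ref{sec:r3d2local}, that $\tau_i(\x)=d_i(\x)-d_3(\x)$ has gradient $\nabla\tau_i(\x)=\bs{\tilde{d}_i}(\x)-\bs{\tilde{d}_3}(\x)$, so the rows of $J(\x)$ are $\bs{\tilde{d}_1}(\x)-\bs{\tilde{d}_3}(\x)$ and $\bs{\tilde{d}_2}(\x)-\bs{\tilde{d}_3}(\x)$. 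Expanding $\ast\big[(\bs{\tilde{d}_1}(\x)-\bs{\tilde{d}_3}(\x))\wedge(\bs{\tilde{d}_2}(\x)-\bs{\tilde{d}_3}(\x))\big]$ by bilinearity and antisymmetry gives $\ast(\bs{\tilde{d}_1}(\x)\wedge\bs{\tilde{d}_2}(\x))+\ast(\bs{\tilde{d}_2}(\x)\wedge\bs{\tilde{d}_3}(\x))+\ast(\bs{\tilde{d}_3}(\x)\wedge\bs{\tilde{d}_1}(\x))$, which is twice the signed area of the triangle whose three vertices are the unit vectors $\bs{\tilde{d}_1}(\x),\bs{\tilde{d}_2}(\x),\bs{\tilde{d}_3}(\x)$ on the unit circle. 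Since a line meets a circle in at most two points, that area vanishes precisely when two of these three points coincide; hence $\rank J(\x)<2$ if and only if $\bs{\tilde{d}_i}(\x)=\bs{\tilde{d}_j}(\x)$ for some $i\neq j$.

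Second, I would translate each condition $\bs{\tilde{d}_i}(\x)=\bs{\tilde{d}_j}(\x)$ into a statement about where $\x$ lies. The equality $(\x-\m{i})/d_i(\x)=(\x-\m{j})/d_j(\x)$ says that $\x-\m{i}$ and $\x-\m{j}$ are positive multiples of one another; writing this as an affine relation shows it holds exactly when $\x$ lies on the line through $\m{i}$ and $\m{j}$ but off the closed segment $\overline{\m{i}\m{j}}$, i.e. on $r_k^-\cup r_k^+$ with $\{i,j,k\}=\{1,2,3\}$, in the notation of Definitions \ref{def:rettecerchio} and \ref{def:allineati}. Running over the three pairs identifies the degeneracy locus $\{\rank J<2\}$ with $\bigcup_{i=1}^{3}(r_i^-\cup r_i^+)$, with no hypothesis on the receiver configuration. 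It is worth noting the contrast with the quantities $h_i$ of the Gaussian-curvature computation, which vanish on the whole lines $r_i$: there one differences the vectors $\mb{d_i}(\x)$ themselves, whereas here one differences the unit vectors, and that is exactly what restricts the locus to the outer half-lines $r_i^\pm$.

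Third, I would separate the rank-$1$ points from the rank-$0$ ones. The rank is $0$ precisely when both rows vanish, i.e. $\bs{\tilde{d}_1}(\x)=\bs{\tilde{d}_3}(\x)$ and $\bs{\tilde{d}_2}(\x)=\bs{\tilde{d}_3}(\x)$, equivalently all three $\bs{\tilde{d}_i}(\x)$ coincide, which by the previous step forces $\x$ to be collinear with all three receivers. If $\m{1},\m{2},\m{3}$ are not collinear this never happens, so the rank is $1$ on $\bigcup_i(r_i^-\cup r_i^+)$ and $2$ elsewhere, which is the non-collinear assertion. If $\m{1},\m{2},\m{3}$ lie on a line $r$ with $\m{3}\in r_3^0$, the three half-line systems all lie in $r$; inspecting the focal directions along $r$ shows that the three unit vectors $\bs{\tilde{d}_i}(\x)$ all agree exactly for $\x$ on the part of $r$ beyond the two outer receivers, which is $r\setminus r_3^0$, while for $\x\in r_3^0$ exactly one of the two rows vanishes, giving rank $1$; off $r$ the rank is $2$. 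This is the collinear assertion.

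The two wedge expansions in the first and second steps are routine; the step that needs care — and that I expect to be the main source of bookkeeping errors — is distinguishing "$\bs{\tilde{d}_i}(\x)$ and $\bs{\tilde{d}_j}(\x)$ point in the same direction" from "are merely parallel" (only the former makes $\det J$ vanish, which is why the locus is a union of open half-lines rather than of full lines) and, in the collinear configuration, matching the three sub-cases $\x\in r\setminus r_3^0$, $\x\in r_3^0\setminus\{\m{1},\m{2},\m{3}\}$, $\x\notin r$ with the claimed ranks $0,1,2$.
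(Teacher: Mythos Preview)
The paper does not prove this theorem; it merely quotes it from \cite{Compagnoni2013a} as part of the summary in Section~\ref{sec:TDOA-summary}. Your argument is therefore not being compared against a proof in the present manuscript, but it is correct and complete as written.

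Your key observation --- that $\det J(\x)$ equals twice the signed area of the triangle with vertices $\bs{\tilde d_1}(\x),\bs{\tilde d_2}(\x),\bs{\tilde d_3}(\x)$ on the unit circle, and that three points on a circle are collinear only when two of them coincide --- is an elegant shortcut. It cleanly reduces the rank analysis to the purely combinatorial question of which pairs $\bs{\tilde d_i}(\x)=\bs{\tilde d_j}(\x)$ can occur, and your translation of that equality to ``$\x$ lies on the line through $\m{i},\m{j}$ outside the closed segment $\overline{\m{i}\m{j}}$'' is exactly right (and, as you note, is what distinguishes this locus from the full lines $r_k$ arising in the TOA Jacobian of Theorem~\ref{rank}). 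The separation of the rank-$0$ and rank-$1$ cases is also handled correctly: rank $0$ forces $\bs{\tilde d_1}=\bs{\tilde d_2}=\bs{\tilde d_3}$, which in the non-collinear configuration is impossible away from the receivers, and in the collinear configuration with $\m{3}\in r_3^0$ occurs precisely on $r\setminus r_3^0$; your case check on $r_3^0$ confirming that exactly one row vanishes there is sound. The only cosmetic point is that in the non-collinear case you might make explicit that on $r_k^\pm$ the remaining unit vector is genuinely different from the coinciding pair (because the three lines $r_1,r_2,r_3$ pairwise meet only at the excluded receiver points), but this is implicit in your ``this never happens'' clause.
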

\noindent
In the second item, without loss of generality we assume $\m{3} $ between $ \m{1},\m{2}$, similarly to what we have done in Section \ref{sec:collinear-microphones}. For a generic sensors configuration, the set $D=\cup_{i=1}^3 (r_i^- \cup r_i^+)$  (respectively, $D=r$ for an aligned configuration) is the degeneracy locus of $\bs{\tau_2},$ where the map is not locally invertible.

The image of $D$ in the $\tau$--plane is a subset of the boundary of $\text{Im}(\bs{\tau_2}).$ In particular, $\bs{\tau_2}(D)$ is contained in the facets of the bounded convex polytope $P_2$ defined by the triangular inequalities involving the range differences.
\begin{defn}[$\!\!$\cite{Compagnoni2013a}, Section 5]\label{def:P2}
Let $ P_2 $ be the region in the $ \tau$--plane containing the
points whose coordinates satisfy
\begin{align*}
P_{2}: \left\{\begin{matrix} -d_{31} \leq \tau_{1} \leq d_{31}
\\ - d_{32} \leq \tau_{2} \leq d_{32}
\\ - d_{21} \leq \tau_{2} - \tau_{1} \leq d_{21}
\end{matrix}\right. .
\end{align*}
\end{defn}
\noindent
Clearly, any admissible set of range difference measurements satisfies the triangular inequalities, therefore $\text{Im}(\bs{\tau_2})\subseteq P_2.$ The geometric description of $P_2$ is given in the following theorem.
\begin{thm}[$\!\!$\cite{Compagnoni2013a}, Theorem 5.3]\label{thm:P2}
$P_{2}$ is a polygon (a 2--dimensional convex polytope). Moreover,
if the points $\m{1}$, $\m{2}$ and $\m{3}$ are not
collinear, then $P_{2}$ has exactly 6 facets, named $F_{k}^{\pm}$,
$k = 1,2,3$ in \cite{Compagnoni2013a}, which drop to $ 4 $ if the receivers
$\m{1}$, $\m{2}$ and $\m{3}$ are collinear.
\end{thm}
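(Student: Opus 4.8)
The plan is to treat $P_2$ directly as the intersection of the six half--planes cut out by the scalar inequalities of Definition \ref{def:P2}, and to argue at the level of elementary polyhedral geometry in the $\tau$--plane. I name the six bounding lines $\ell_1^{\pm}\colon \tau_1=\pm d_{31}$, $\ell_2^{\pm}\colon \tau_2=\pm d_{32}$, $\ell_3^{\pm}\colon \tau_2-\tau_1=\pm d_{21}$; since the receivers are pairwise distinct all of $d_{21},d_{31},d_{32}$ are strictly positive, so the six lines are pairwise distinct (the two lines in each parallel pair are distinct, and the three directions $(1,0),(0,1),(-1,1)$ are pairwise non--parallel). First I would check that $P_2$ is a polygon: it is closed and convex as a finite intersection of half--planes; the inequalities $|\tau_1|\le d_{31}$ and $|\tau_2|\le d_{32}$ confine it to a rectangle, so it is bounded; and the origin satisfies all six inequalities strictly, so $P_2$ contains a neighbourhood of $(0,0)$ and is $2$--dimensional. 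Hence $P_2$ is a bounded, full--dimensional convex polytope in $\RR^2$, i.e. a polygon.

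For the facet count I would use the standard fact that a bounded full--dimensional polygon admits a unique irredundant system of defining linear inequalities, in which the inequalities are in bijection with the facets; therefore the number of facets of $P_2$ equals the number of the six inequalities of Definition \ref{def:P2} that are irredundant, i.e. whose deletion strictly enlarges the region. (Equivalently, each $\ell_i^{\pm}$ is a supporting line of $P_2$, so $P_2\cap\ell_i^{\pm}$ is a face and hence -- $P_2$ being full--dimensional -- either a vertex or an edge; the six lines being distinct, distinct edges lie on distinct lines, and the facet count is the number of lines meeting $P_2$ in a segment of positive length.) I would then run the irredundancy test one inequality at a time. To test $\tau_1\le d_{31}$, say, I substitute $\tau_1=d_{31}+\varepsilon$ and ask whether the remaining five inequalities can still be met; this reduces to the non--emptiness of the intersection of two explicit closed intervals in the single variable $\tau_2$, and a short computation shows that for small $\varepsilon>0$ this happens exactly when $d_{31}<d_{21}+d_{32}$. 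The analogous computations give: $\ell_1^{\pm}$ is irredundant iff $d_{31}<d_{21}+d_{32}$; $\ell_2^{\pm}$ iff $d_{32}<d_{21}+d_{31}$; $\ell_3^{\pm}$ iff $d_{21}<d_{31}+d_{32}$; and since the defining system is invariant under $(\tau_1,\tau_2)\mapsto(-\tau_1,-\tau_2)$, the two inequalities in each parallel pair are simultaneously redundant or irredundant.

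It then remains to read off the two cases. When $\m{1},\m{2},\m{3}$ are not collinear, $d_{21},d_{31},d_{32}$ are the side lengths of a genuine triangle, so all three triangle inequalities are strict; hence all six lines are irredundant and $P_2$ has exactly $6$ facets -- the $F_k^{\pm}$, $k=1,2,3$, of \cite{Compagnoni2013a}. When the receivers are collinear, exactly one of the three distances equals the sum of the other two, namely the one joining the two outer receivers, so exactly one of the three strict triangle inequalities fails; the corresponding parallel pair of lines becomes redundant and $P_2$ drops to exactly $4$ facets. Up to relabelling the receivers one may take $\m{3}$ to lie between $\m{1}$ and $\m{2}$, so that $d_{21}=d_{31}+d_{32}$; then $|\tau_2-\tau_1|\le|\tau_1|+|\tau_2|\le d_{31}+d_{32}=d_{21}$ holds automatically on the rectangle $|\tau_1|\le d_{31}$, $|\tau_2|\le d_{32}$, which exhibits $\ell_3^{\pm}$ as the redundant pair and identifies $P_2$ with that rectangle.

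The argument has no real obstacle; the only part requiring care is the bookkeeping in the irredundancy test -- verifying that each of the six interval--intersection conditions collapses to exactly one triangle inequality, and that the collinear subcases (which of the three receivers is the middle one) are all covered by relabelling. A fully equivalent and perhaps more visual route would be to list the $\binom{6}{2}$ pairwise intersection points of the six lines, discard those violating Definition \ref{def:P2}, and observe -- again using only the triangle inequalities among $d_{21},d_{31},d_{32}$ -- that exactly six survive and are pairwise distinct when the receivers are not collinear, while in the collinear case two pairs of them coincide; recording which two lines pass through each surviving vertex then also yields the cyclic ordering of the facets and matches the picture of Lemma \ref{LemmaTriang}, in which $\bs{\tau_2}$ sends the degeneracy lines $r_i^{\pm}$ onto the facets of $P_2$.
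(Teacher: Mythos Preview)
The present paper does not prove Theorem~\ref{thm:P2}: it is quoted from \cite{Compagnoni2013a} as part of the summary in Section~\ref{sec:TDOA-summary}, with no argument given here. Your proposal is therefore not competing with a proof in this manuscript, but it is a correct and self--contained elementary proof of the cited statement. The boundedness, convexity and full--dimensionality are immediate, and your irredundancy test is sound: for each pair $\ell_k^{\pm}$ the question of whether it contributes a facet reduces exactly to the strict triangle inequality $d_{k'}<d_{k''}+d_{k'''}$ among $d_{21},d_{31},d_{32}$, which holds for all three indices precisely when the receivers are not collinear and fails for exactly one index when they are (you should perhaps make explicit, as you implicitly use, that in the collinear case the remaining two strict inequalities still hold, so that only one pair becomes redundant and the count is exactly $4$, not fewer).

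One small inaccuracy in the closing paragraph: the identification of the facets with the images of the half--lines $r_i^{\pm}$ is stated in the text immediately following Theorem~\ref{thm:P2} (the line $\bs{\tau_2}^{-1}(F_i^{\pm})=r_i^{\mp}$), not in Lemma~\ref{LemmaTriang}, which concerns the level sets $B_i(\boldsymbol{\tau})$.
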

\begin{figure}[htb]
\includegraphics[bb=107 327 504 464]{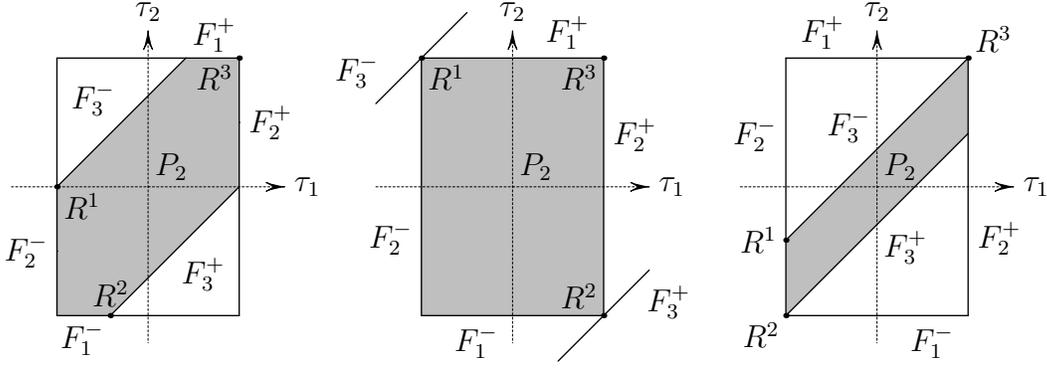}\\
\caption{Left--hand side: polygon $P_{2}$ (in shaded gray) under
the assumption that the points $\m{1}$, $\m{2}$ and
$\m{3}$ are not collinear. Center: polygon $P_{2}$ (in shaded
gray) in the case of three collinear points with $\m{3}\in r_3^0$.
Right--hand side: polygon
$P_{2}$ (in shaded gray) when the sensors lie on a line, but with
$\m{1}$ between $\m{2}$ and
$\m{3}$. The image of $\bs{\tau_2}$ is a subset of $P_2.$}\label{sez9-fig1}
\end{figure}
\noindent
In Figure \ref{sez9-fig1}, we set the image of the sensor as $ R^i = \bs{\tau_2}(\m{i}),\ i=1,2,3.$ As we said above, we have that $\bs{\tau_2}^{-1}(F_i^{\pm})=r_i^{\mp},\ i=1,2,3.$

Now, let us focus on the system
\begin{equation}\label{eq:systTDOA}
\left\{
\begin{array}{l}
\tau_{1}=d_1(\x)-d_3(\x)\\
\tau_{2}=d_2(\x)-d_3(\x)
\end{array}\right.\ .
\end{equation}
The Minkowski space formalism that we presented in Section \ref{sec:MK} is particularly suitable for the investigation of \eqref{eq:systTDOA}. We only have to adapt the notations in order to handle the range differences insted of the ranges.
We set the points $\mb{M_i}(\boldsymbol{\tau}) = (x_i, y_i, \tau_i),\ i=1, 2,$ and $\mb{M_3} = (x_3, y_3, 0)$. If $ \mb{X} = (x,y,\tau) $ is a point in $\RR^{2,1}$, the displacement vectors are then $ \mb{D_i}(\mb{X},
\boldsymbol{\tau}) = \mb{X} - \mb{M_i}(\boldsymbol{\tau}).$
Furthermore, we set $ \mb{D_{ji}}(\boldsymbol{\tau}) =
\mb{M_j}(\boldsymbol{\tau}) - \mb{M_i}(\boldsymbol{\tau}),$ for $
1 \leq i < j \leq 3.$
Then, system \eqref{eq:systTDOA} can be readily rewritten as
\begin{equation} \label{eq:mink-cone-planes}
\left\{
\begin{array}{l}
i_{\mb{D_3}\left(\mb{X},\bt\right)} \mb{D_{31}} \left(\bt\right)^{\flat}=
-\frac 12 \parallel \mb{D_{31}}(\boldsymbol{\tau}) \parallel^2\\
i_{\mb{D_3}\left(\mb{X},\bt\right)} \mb{D_{32}} \left(\bt\right)^{\flat}=
-\frac 12 \parallel \mb{D_{32}}(\boldsymbol{\tau}) \parallel^2\\[1mm]
\parallel \mb{D_3}(\mb{X},\boldsymbol{\tau}) \parallel^2=0\\[1mm]
i_{\mb{D_3}\left(\mb{X},\bt\right)}\, \mb{e_3}^{\flat}\geq
\rm{min}(\tau_1,\tau_2,0)
\end{array}\right.\, .
\end{equation}
System \eqref{eq:mink-cone-planes} is very like to \eqref{eq:Formulation_i} and it can just as well be geometrically interpreted in terms of intersection of cones and planes.
\begin{defn}[$\!\!$\cite{Compagnoni2013a}, Definition 4.1]\label{cone-plane}
We set
\begin{enumerate}
\item $ C_3(\boldsymbol{\tau}) = \{ \mb{X} \in \RR^{2,1} \ \vert \
\parallel \mb{D_3}(\mb{X}, \boldsymbol{\tau}) \parallel^2 = 0 \};$
\item $ C_3(\boldsymbol{\tau})^- = \{ \mb{X} \in
C_3(\boldsymbol{\tau}) \ \vert \
i_{\mb{D_3}\left(\mb{X},\bt\right)}\, \mb{e_3}^{\flat}\geq 0 \}.$
\end{enumerate}
Moreover, for $ i=1,2,$ we set $$ \Pi_i(\boldsymbol{\tau}) = \{
\mb{X} \in \RR^{2,1} \ \vert \
i_{\mb{D_3}\left(\mb{X},\bt\right)} \mb{D_{3i}} \left(\bt\right)^{\flat} =
-\frac 12 \parallel\mb{D_{3i}}(\boldsymbol{\tau})
\parallel^2 \} $$ and $ L_{21}(\boldsymbol{\tau}) =
\Pi_1(\boldsymbol{\tau}) \cap \Pi_2(\boldsymbol{\tau}).$
\end{defn}
\noindent As in Section \ref{sec:MK}, $ C_3(\bs{\tau}) $ is a right circular cone with $
\mb{M_3}(\bs{\tau}) $ as vertex and $ \Pi_i(\bs{\tau}) $ is a plane. The set $ C_3(\bs{\tau})^- $ is the half--cone contained in the half--space $\TT\leq\TT_3.$ The source is placed at the projection on the $x$--plane of the intersection $ C_3(\bs{\tau})^-\cap L_{21}(\boldsymbol{\tau}).$ Now, we carry on our analysis separately for the case of a general configuration of the sensors and for the aligned one.

\subsection{The general case: $\m{1},\m{2},\m{3}$ are not contained in a line}
\label{sec:TDOA-summary-general}

If $\m{1}$, $\m{2}$ and $\m{3}$ are not
collinear, then $ \mb{D_{31}}(\bs{\tau}) $ and $ \mb{D_{32}}(\bs{\tau})
$ are linearly independent. In this scenario, $ L_{21}(\bs{\tau}) $ is the line described in the following lemma.

\begin{lemma}[$\!\!$\cite{Compagnoni2013a}, Lemma 6.1]\label{lm:paramL21}
For any $\bt\in\RR^2$, $L_{21}(\bt)=\Pi_1(\bt)
\cap \Pi_2(\bt)$ is a line. A parametric representation of
$L_{21}(\bt)$ is $\X(\lambda;\bt)=\mb{L_0}(\bs{\tau}) + \lambda
\mb{v}(\bs{\tau})$, where
\begin{equation*}
\mb{v}(\bs{\tau})= \ast( (\mb{d_{31}} \wedge \mb{d_{32}}) +
(\tau_1 \mb{d_{32}} - \tau_2 \mb{d_{31}}) \wedge \mb{e_3})
\end{equation*}
and the displacement vector of $\mb{L_0}(\bs{\tau})$ is
\begin{eqnarray*}
\mb{D_3}(\mb{L_0}(\bs{\tau}))= -\frac{\ast \left[ \left(
\Vert\mb{D_{32}}(\bs{\tau})\Vert^2\mb{d_{31}} -
\Vert\mb{D_{31}}(\bs{\tau})\Vert^2 \mb{d_{32}} \right) \wedge
\mb{e_3} \right]} {2\Vert\mb{d_{31}} \wedge \mb{d_{32}} \Vert}\,.
\end{eqnarray*}
\end{lemma}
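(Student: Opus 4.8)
The plan is to follow almost verbatim the argument of Lemma~\ref{4am}; the only new feature is that for $i=1,2$ the displacement vectors $\mb{D_{3i}}(\bt) = \mb{d_{3i}} - \tau_i\mb{e_3}$ now carry a nontrivial time component, whereas in the range case that role was played by $\mb{e_3}$ alone. First I would check that $L_{21}(\bt)$ is a line: since $\m{1},\m{2},\m{3}$ are not collinear, $\mb{d_{31}}$ and $\mb{d_{32}}$ are linearly independent vectors of the $x$-plane, hence so are $\mb{D_{31}}(\bt)$ and $\mb{D_{32}}(\bt)$ in $\RR^{2,1}$ (their projections onto $\langle\mb{e_1},\mb{e_2}\rangle$ are already independent). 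Thus $\Pi_1(\bt)$ and $\Pi_2(\bt)$ are affine hyperplanes of $\RR^{2,1}$ with linearly independent conormals $\mb{D_{31}}(\bt)^\flat,\mb{D_{32}}(\bt)^\flat$, so their intersection is a (nonempty) affine line.

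Next I would pin down the direction of the line. A vector $\mb v(\bt)$ is parallel to $L_{21}(\bt)$ exactly when $i_{\mb v(\bt)}\mb{D_{31}}(\bt)^\flat = i_{\mb v(\bt)}\mb{D_{32}}(\bt)^\flat = 0$, which by the independence just established is equivalent to $i_{\mb v(\bt)}\bigl(\mb{D_{31}}(\bt)^\flat\wedge\mb{D_{32}}(\bt)^\flat\bigr) = 0$. By Corollary~A.6 of \cite{Compagnoni2013a} this forces $\mb v(\bt) = t\,\ast\!\bigl(\mb{D_{31}}(\bt)\wedge\mb{D_{32}}(\bt)\bigr)$ for some $t\neq 0$, and expanding
\[
\mb{D_{31}}(\bt)\wedge\mb{D_{32}}(\bt) = (\mb{d_{31}}-\tau_1\mb{e_3})\wedge(\mb{d_{32}}-\tau_2\mb{e_3}) = \mb{d_{31}}\wedge\mb{d_{32}} + (\tau_1\mb{d_{32}}-\tau_2\mb{d_{31}})\wedge\mb{e_3}
\]
and taking $t=1$ gives the stated $\mb v(\bt)$. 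I would also note that the time part $\ast(\mb{d_{31}}\wedge\mb{d_{32}})$ of $\mb v(\bt)$ is nonzero, so $L_{21}(\bt)$ is transverse to the space plane $\Pi = \{\X : i_{\mb{D_3}(\X,\bt)}\mb{e_3}^\flat = 0\} = \{\tau = 0\}$.

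Finally, to produce a point I would take $\mb{L_0}(\bt) = L_{21}(\bt)\cap\Pi$, which by the transversality above is a single point. Setting $\bs{\Omega} = \mb{D_{31}}(\bt)\wedge\mb{D_{32}}(\bt)\wedge\mb{e_3} = \mb{d_{31}}\wedge\mb{d_{32}}\wedge\mb{e_3}$ and wedging the three scalar equations $i_{\mb{D_3}(\mb{L_0})}\mb{D_{31}}(\bt)^\flat = -\frac{1}{2}\|\mb{D_{31}}(\bt)\|^2$, $i_{\mb{D_3}(\mb{L_0})}\mb{D_{32}}(\bt)^\flat = -\frac{1}{2}\|\mb{D_{32}}(\bt)\|^2$ and $i_{\mb{D_3}(\mb{L_0})}\mb{e_3}^\flat = 0$, Lemma~A.7 of \cite{Compagnoni2013a} yields
\[
\mb{D_3}(\mb{L_0}(\bt)) = \frac{1}{2\ast\bs{\Omega}}\,\ast\!\Bigl[\bigl(\|\mb{D_{32}}(\bt)\|^2\mb{D_{31}}(\bt) - \|\mb{D_{31}}(\bt)\|^2\mb{D_{32}}(\bt)\bigr)\wedge\mb{e_3}\Bigr].
\]
Using $\mb{D_{3i}}(\bt)\wedge\mb{e_3} = \mb{d_{3i}}\wedge\mb{e_3}$ and $\ast\bs{\Omega} = \ast(\mb{d_{31}}\wedge\mb{d_{32}}\wedge\mb{e_3}) = \pm\|\mb{d_{31}}\wedge\mb{d_{32}}\|$ this collapses to the asserted formula. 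The main obstacle I anticipate is purely computational bookkeeping: fixing the global sign and confirming that the normalizing factor is the \emph{Euclidean} norm $\|\mb{d_{31}}\wedge\mb{d_{32}}\|$ (rather than a quantity involving $\tau_1,\tau_2$) by carefully tracking the $\RR^{2,1}$ Hodge-star and interior-product conventions of Appendix~A of \cite{Compagnoni2013a}, exactly as in the final simplification of Lemma~\ref{4am}.
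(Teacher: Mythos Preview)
The paper does not supply a proof of this lemma: it is quoted verbatim from \cite{Compagnoni2013a}, Lemma~6.1, as part of the summary in Section~\ref{sec:TDOA-summary}. Your argument is correct and is exactly the adaptation of the paper's proof of Lemma~\ref{4am} that one would expect, with the $\mb{D_{3i}}(\bt)=\mb{d_{3i}}-\tau_i\mb{e_3}$ playing the role that $\mb{D_{ji}}(\bs{\TT})$ and $\mb{e_3}$ played separately there.

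One small point worth noting: in Lemma~\ref{4am} the auxiliary plane used to single out $\mb{L_0}$ is the plane orthogonal to the direction vector $\mb v$, whereas you intersect $L_{21}(\bt)$ with the space plane $\{\tau=0\}$. Your choice is actually the more natural one here, since the stated formula for $\mb{D_3}(\mb{L_0}(\bt))$ is a Hodge dual of something wedged with $\mb{e_3}$ and hence has vanishing $\mb{e_3}$-component; in other words, the $\mb{L_0}(\bt)$ of the statement \emph{is} the point of $L_{21}(\bt)$ on $\{\tau=0\}$, so your computation lands directly on the asserted expression without a further change of base point. The residual sign/normalization check you flag ($\ast(\mb{d_{31}}\wedge\mb{d_{32}}\wedge\mb{e_3})=-\Vert\mb{d_{31}}\wedge\mb{d_{32}}\Vert$ under the conventions of Appendix~A of \cite{Compagnoni2013a}) is indeed just bookkeeping.
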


The intersection $ C_3^- \cap L_{21}(\bs{\tau}) $ can be computed
by finding the non--positive real roots of the following quadratic equation in $\lambda$ (see \cite{Compagnoni2013a}, Equation (18)):
\begin{equation} \label{eq-deg-2}
\Vert \mb{v}(\bs{\tau}) \Vert^2 \lambda^2 + 2 \lambda \langle
\mb{D_3}(\mb{L_0}(\bs{\tau})) , \mb{v}(\bs{\tau}) \rangle + \Vert
\mb{D_3}(\mb{L_0}(\bs{\tau})) \Vert^2 = 0.
\end{equation}
The number of such solutions depends on the coefficients
$$
a(\bt)=\Vert \mb{v}(\bs{\tau}) \Vert^2,\qquad
b(\bt)=\langle\mb{D_3}(\mb{L_0}(\bs{\tau})),\mb{v}(\bs{\tau})\rangle,\qquad
c(\bt)=\Vert\mb{D_3}(\mb{L_0}(\bs{\tau})) \Vert^2
$$
of the equation, which are polynomial in $\bt$ (see \cite{Compagnoni2013a}, Definition 6.3). In particular, let us consider the following (semi)--algebraic subsets of the $\tau$--plane (see Figure \ref{fig:slide19}).
\begin{defn}[$\!\!$\cite{Compagnoni2013a}, Definitions 6.5, 6.9]\label{def:atau}
We set
$$
\begin{array}{lll}
E=\{\bt\in\RR^2 \ \vert\ a(\bs{\tau})=0\}, & \qquad &
E^\pm=\{\bt\in\RR^2 \ \vert\ \pm a(\bs{\tau}) > 0\};\\[2mm]
C=\{\bt\in\RR^2 \ \vert\ b(\bs{\tau})=0\}, & &
C^\pm=\{\bt\in\RR^2 \ \vert\ \pm b(\bs{\tau}) > 0\}.
\end{array}
$$
\end{defn}
\begin{prop}[$\!\!$\cite{Compagnoni2013a}, Propositions 6.6, 6.10--6.12]\label{atau}
$ E \subset P_2 $ is an ellipse centered at $\mb{0}= (0,0) $, and
it represents the only conic that is tangent to each side of the
hexagon $P_2$. The curve $C$ is the only cubic passing through $\mb{0},$ the points $\pm(d_{31},d_{32}),\pm(d_{31},-d_{32})$ and the intersection $E\cap \partial P_2.$
\end{prop}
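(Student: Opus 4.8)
The statement to prove is Proposition \ref{atau} (\cite{Compagnoni2013a}, Propositions 6.6, 6.10--6.12): that $E$ is an ellipse centered at the origin, tangent to each side of the hexagon $P_2$, and is the unique conic with this tangency property; and that $C$ is the unique cubic through the seven prescribed points $\mb{0}$, $\pm(d_{31},d_{32})$, $\pm(d_{31},-d_{32})$, and $E\cap\partial P_2$. The plan is to work directly from the explicit polynomial expressions for the coefficients $a(\bt)=\Vert\mb{v}(\bt)\Vert^2$ and $b(\bt)=\langle\mb{D_3}(\mb{L_0}(\bt)),\mb{v}(\bt)\rangle$ coming from Lemma \ref{lm:paramL21}, since these are what define $E$ and $C$.

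For the ellipse $E$: first I would expand $\mb{v}(\bt)=\ast\big((\mb{d_{31}}\wedge\mb{d_{32}})+(\tau_1\mb{d_{32}}-\tau_2\mb{d_{31}})\wedge\mb{e_3}\big)$ in coordinates and compute $\Vert\mb{v}(\bt)\Vert^2$ with respect to the Minkowski norm of $\RR^{2,1}$. Because the $\mb{e_3}$-component contributes with a minus sign, the quadratic form in $(\tau_1,\tau_2)$ coming from the second summand enters negatively, while the constant term $\Vert\mb{d_{31}}\wedge\mb{d_{32}}\Vert^2>0$ (non-collinearity). Setting $a(\bt)=0$ should therefore yield an equation $\tau_1^2 d_{32}^2 - 2\tau_1\tau_2(\mb{d_{31}}\cdot\mb{d_{32}}) + \tau_2^2 d_{31}^2 = \Vert\mb{d_{31}}\wedge\mb{d_{32}}\Vert^2$, i.e. a conic with no linear terms (hence centered at $\mb{0}$) whose quadratic part is positive definite (its discriminant is $-4\Vert\mb{d_{31}}\wedge\mb{d_{32}}\Vert^2<0$), so $E$ is an ellipse centered at the origin. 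For the tangency to the six sides of $P_2$: each facet $F_k^\pm$ of $P_2$ lies on one of the lines $\tau_1=\pm d_{31}$, $\tau_2=\pm d_{32}$, $\tau_2-\tau_1=\pm d_{21}$ (Definition \ref{def:P2}); I would substitute each such line into the equation of $E$ and check that the resulting quadratic in the remaining parameter has zero discriminant — using $d_{21}^2=d_{31}^2+d_{32}^2-2\mb{d_{31}}\cdot\mb{d_{32}}$ to simplify. Uniqueness of a conic tangent to all six sides of a hexagon is then a classical fact: a conic is determined by five conditions, and being tangent to five of the six lines already pins it down (generically), so tangency to all six leaves at most one conic; since $E$ works, it is the one. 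Alternatively, I can invoke that $P_2$ is the projection/shadow of the triangular-inequality region and the inscribed conic is the image of the incircle-type locus from \cite{Compagnoni2013a}.

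For the cubic $C$: I would similarly expand $b(\bt)=\langle\mb{D_3}(\mb{L_0}(\bt)),\mb{v}(\bt)\rangle$ using the formula for $\mb{D_3}(\mb{L_0}(\bt))$ in Lemma \ref{lm:paramL21}; since $\mb{D_3}(\mb{L_0}(\bt))$ involves $\Vert\mb{D_{3i}}(\bt)\Vert^2 = d_{3i}^2 - \tau_i^2$ (degree 2 in $\bt$) divided by the constant $2\Vert\mb{d_{31}}\wedge\mb{d_{32}}\Vert$, and $\mb{v}(\bt)$ is degree 1 in $\bt$, the numerator of $b(\bt)$ is a cubic polynomial in $(\tau_1,\tau_2)$. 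Then I would verify by direct substitution that this cubic vanishes at $\mb{0}$ (the constant and... all terms of the cubic should carry at least one factor corresponding to a displacement, and plugging $\bt=\mb{0}$ should give $0$ after using that $\mb{D_3}(\mb{L_0}(\mb{0}))$ is parallel to $\mb{e_3}$ or similar), and at the four points $(\pm d_{31},\pm d_{32})$ — these make $\Vert\mb{D_{31}}\Vert^2$ or $\Vert\mb{D_{32}}\Vert^2$ vanish, which should force a factorization. That $C$ passes through the four points $E\cap\partial P_2$ can be seen from the geometry of equation \eqref{eq-deg-2}: on $\partial P_2$ the line $L_{21}$ becomes tangent to the cone and on $E$ the leading coefficient $a$ vanishes; where both happen the quadratic degenerates and $b$ must vanish too. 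Finally, uniqueness: a plane cubic has $9$ degrees of freedom, so it is determined by $9$ points in general position, but here I only have $7$ prescribed points plus the geometric constraints — so uniqueness should instead be argued from the structure, e.g. the two ellipse-points on each of a suitable pair of facets together with a degree count via Bézout (a second cubic through the same locus would meet $E$ in too many points, or meet the lines of $\partial P_2$ in too many points), forcing it to coincide with $C$.

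The main obstacle I anticipate is the uniqueness of the cubic $C$: counting seven named points is not obviously enough to pin down a cubic, so I will need to extract extra incidence or tangency information (for instance, that $C$ is tangent to certain lines at the points $(\pm d_{31},\pm d_{32})$, or passes through the $R^i=\bs{\tau_2}(\m{i})$ as well) and then run a Bézout argument against $E$ or against the six lines supporting $P_2$. The tangency verifications for $E$ are pure (if tedious) discriminant computations and should go through cleanly once $d_{21}^2$ is rewritten via the law of cosines; the cubic's passage through $E\cap\partial P_2$ will require carefully reading off from \eqref{eq-deg-2} that simultaneous vanishing of $a(\bt)$ and of the discriminant $b(\bt)^2-a(\bt)c(\bt)$ forces $b(\bt)=0$.
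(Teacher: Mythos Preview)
The paper does not actually prove this proposition: Section~\ref{sec:TDOA-summary} is explicitly a summary of results quoted from \cite{Compagnoni2013a}, and Proposition~\ref{atau} is stated there with a citation but no argument. So there is no in-paper proof to compare against; your direct-computation plan from the formulas in Lemma~\ref{lm:paramL21} is a reasonable reconstruction, and for $E$ it is exactly right --- expanding $a(\bt)=\Vert\mb{v}(\bt)\Vert^2$ gives
\[
d_{32}^2\tau_1^2 - 2(\mb{d_{31}}\!\cdot\!\mb{d_{32}})\,\tau_1\tau_2 + d_{31}^2\tau_2^2 \;=\; \Vert\mb{d_{31}}\wedge\mb{d_{32}}\Vert^2,
\]
which is the same ellipse the paper later recovers as the projection of the tangent cone \eqref{tang-cone}. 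The six tangencies follow from the discriminant checks you describe, using $d_{21}^2=d_{31}^2+d_{32}^2-2\,\mb{d_{31}}\!\cdot\!\mb{d_{32}}$.

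Your anticipated obstacle, however, rests on a miscount. The intersection $E\cap\partial P_2$ consists of the \emph{six} tangency points $T_i^{\pm}$, $i=1,2,3$ (one on each side of the hexagon --- they are written out explicitly just below the proposition), not two. Together with $\mb{0}$ and the four points $\pm(d_{31},\pm d_{32})$ this gives \emph{eleven} prescribed points, not seven. With eleven points the uniqueness of $C$ is immediate: if $C'$ were a second cubic through all of them, a generic member of the pencil $\lambda C+\mu C'$ could be forced through a seventh point of $E$, hence would contain $E$ by B\'ezout and thus equal $E\cup\ell$ for a line $\ell$; but then $\mb{0}$ and the four corner points $\pm(d_{31},\pm d_{32})$ --- none of which lie on $E$ --- would all have to lie on $\ell$, which is impossible. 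So the extra tangency or incidence conditions you were reaching for are unnecessary, and the part of your plan you flagged as the main difficulty in fact disappears once the points are counted correctly.
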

\begin{figure}[htb]
\begin{center}
\resizebox{12.5cm}{!}{
  \includegraphics[bb=107 317 504 474]{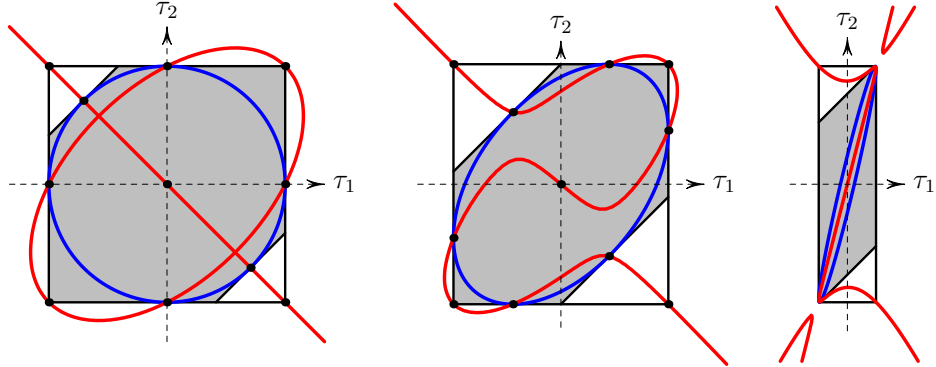}}
  \caption{\label{fig:slide19}Examples of ellipses $E$ in blue and cubics $C$ in red. The $ 11 $ distinguished points are marked in the first two pictures, but not in the last one because $ 4 $ of them are very close to each other on the upper--right vertex of the rectangle, and similarly for the $ 4 $ ones, which are close to the opposite vertex. In all the three cases $ P_2 $ is an hexagon, but it exhibits two very short sides in the right-hand picture.}
\end{center}
\end{figure}
\noindent
We give some examples of the curves $E$ and $C$ in Figure \ref{fig:slide19}. The tangency points in $E\cap\partial P_2$ are
$$
T_i^+ = \left( \langle \mb{d_{31}}, \mb{\tilde{d_{jk}}} \rangle,
\langle \mb{d_{32}}, \mb{\tilde{d_{jk}}} \rangle \right) \quad
\mbox{ and } \quad T_i^- = \left( -\langle \mb{d_{31}},
\mb{\tilde{d_{jk}}} \rangle, -\langle \mb{d_{32}},
\mb{\tilde{d_{jk}}} \rangle \right),
$$
where $1 \leq i,j,k \leq 3 $ with $k < j$ and $k \neq j$ (see \cite{Compagnoni2013a}, Definition 6.7).

We are now ready to describe the image and the global invertibility of the map $ \bs{\tau_2}$ (see Figure \ref{fig:tauimage}).
\begin{defn}[$\!\!$\cite{Compagnoni2013a}, Definition 6.15]
The set $ \mathring{P_2} \cap E^+ \cap C^+ $ is the union of three
disjoint connected components that we name $ U_1, U_2, U_3,$ where
$ R^i \in \bar{U}_i $ for $ i = 1, 2, 3$.
\end{defn}
\begin{thm}[$\!\!$\cite{Compagnoni2013a}, Theorem 6.16]\label{th:imtau2}
$\mbox{Im}(\bs{\tau_2}) = E^- \cup \bar{U}_1 \cup \bar{U}_2 \cup
\bar{U}_3 \setminus \{ T_1^\pm, T_2^\pm, T_3^\pm \}.$ Moreover,
$$
\vert\bs{\tau_2}^{-1}(\bt)\vert=\left\{
\begin{array}{lcl}
2 & \text{if} & \bt \in U_1 \cup U_2 \cup U_3\\
1 & \text{if} & \bt \in \mbox{Im}(\bs{\tau_2}) \setminus U_1 \cup U_2 \cup
U_3
\end{array}\right..
$$
\end{thm}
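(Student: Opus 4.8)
The plan is to reduce the whole statement to counting the \emph{physically admissible} roots of the quadratic \eqref{eq-deg-2} and then to run a case analysis on the causal character of the direction vector $\mb{v}(\bt)$. First I would recall that, by the discussion preceding Definition \ref{cone-plane} and by Lemma \ref{lm:paramL21}, a source realizing $\bt$ is the $x$--plane projection of a point of $C_3(\bt)^-\cap L_{21}(\bt)$, and $L_{21}(\bt)$ is parametrized by $\X(\lambda;\bt)=\mb{L_0}(\bt)+\lambda\,\mb{v}(\bt)$. Hence $\bt\in\mbox{Im}(\bs{\tau_2})$ exactly when the equation $a(\bt)\lambda^2+2b(\bt)\lambda+c(\bt)=0$ of \eqref{eq-deg-2} has a real root $\lambda$ with $\X(\lambda;\bt)\in C_3(\bt)^-$; the last condition, being $i_{\mb{D_3}(\X(\lambda;\bt),\bt)}\mb{e_3}^\flat\geq\min(\tau_1,\tau_2,0)$, is \emph{affine} in $\lambda$, so it isolates a half--line $\Lambda_0(\bt)\subseteq\RR$, and $\bs{\tau_2}^{-1}(\bt)$ is in bijection with the roots of the quadratic lying in $\Lambda_0(\bt)$ (a double root counts once, giving a single source on $H$). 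Since $\mbox{Im}(\bs{\tau_2})\subseteq P_2$ by the triangular inequalities, it is enough to work inside $P_2$.

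Next I would split according to the sign of $a(\bt)=\Vert\mb{v}(\bt)\Vert^2$. If $a(\bt)<0$, i.e. $\bt\in E^-$ (which lies in $\mathring P_2$ because by Proposition \ref{atau} $E$ is inscribed in $P_2$), then $\mb{v}(\bt)$ is timelike, so $L_{21}(\bt)$ is a timelike line and meets the null cone $C_3(\bt)$ transversally in exactly two points, one on each nappe; exactly one of them lies in $C_3(\bt)^-$. Thus $E^-\subseteq\mbox{Im}(\bs{\tau_2})$ with fiber $1$. If $a(\bt)=0$, i.e. $\bt\in E$, the equation degenerates to the linear $2b(\bt)\lambda+c(\bt)=0$: for $b(\bt)\neq0$ there is one root, which one checks lies in $\Lambda_0(\bt)$, hence fiber $1$; when $b(\bt)=0$ as well one is, by Proposition \ref{atau} and the formulas for $T_i^\pm$, exactly at a tangency point $T_i^\pm=E\cap\partial P_2$, where $L_{21}(\bt)$ is a null line lying on $\partial P_2$ and has no finite intersection with $C_3(\bt)^-$, so the fiber is empty and these points must be excluded.

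If $a(\bt)>0$, i.e. $\bt\in E^+$, then $\mb{v}(\bt)$ is spacelike; real roots exist iff $\Delta(\bt):=b(\bt)^2-a(\bt)c(\bt)\geq0$, and when they do the two roots $\lambda_\pm$ sit relative to $\Lambda_0(\bt)$ according to the signs of $b(\bt)$ and $c(\bt)$. A sign analysis of these polynomials, combined with the inequalities defining $P_2$, shows that \emph{both} roots are admissible precisely on $\mathring P_2\cap E^+\cap C^+$ (where moreover $\Delta(\bt)>0$), that is, on $U_1\cup U_2\cup U_3$, while on the remaining accessible part of $E^+\cap P_2$ exactly one root is admissible. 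Assembling the three cases, $\bt\in\mbox{Im}(\bs{\tau_2})$ iff $\bt$ lies in $E^-\cup(U_1\cup U_2\cup U_3)\cup(E\setminus\{T_i^\pm\})$ together with the accessible part of $\partial U_i$; checking the bounding arcs one has $E\cap\bar U_i$ in the null case above (fiber $1$), $C\cap\bar U_i$ where $b(\bt)=0$ so $\lambda_\pm=\pm\sqrt{-c(\bt)/a(\bt)}$ and exactly one lies in $\Lambda_0(\bt)$ (fiber $1$), and $\partial P_2\cap\bar U_i$ which consists of the extremal sources, namely $\bs{\tau_2}(r_i^\pm)$ with fiber $1$. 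This gives $\mbox{Im}(\bs{\tau_2})=E^-\cup\bar U_1\cup\bar U_2\cup\bar U_3\setminus\{T_1^\pm,T_2^\pm,T_3^\pm\}$ and $|\bs{\tau_2}^{-1}(\bt)|=2$ exactly on $U_1\cup U_2\cup U_3$, $=1$ elsewhere on the image. That $\mathring P_2\cap E^+\cap C^+$ splits into three components $U_i$, one clinging to each vertex $R^i=\bs{\tau_2}(\m{i})$, follows from the rank statement for $J$ and the explicit description of $E$ and $C$ in Proposition \ref{atau}: near $\m{i}$ the only second preimage is its reflection across $r_j^\pm$, and $E,C$ separate these double regions.

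The main obstacle is the spacelike case: carefully determining, from the signs of $a(\bt),b(\bt),c(\bt)$ and the affine constraint cutting out $C_3(\bt)^-$, the exact semialgebraic region on which both roots of \eqref{eq-deg-2} are physical, and then identifying that region with the topological components $U_i$ — in particular verifying that the fiber count remains $2$ on each $\mathring U_i$ but drops to $1$ on the bounding arcs (so those arcs stay in the image), while the six points $T_i^\pm$ fall out of the image entirely.
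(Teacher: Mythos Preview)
Your sketch follows the direct route of \cite{Compagnoni2013a}: parametrize $L_{21}(\bt)$, substitute into the null--cone equation to obtain the quadratic \eqref{eq-deg-2}, and then run a case split on the causal type of $\mb{v}(\bt)$, i.e.\ on the sign of $a(\bt)$. This is sound in outline and is essentially the argument the present paper is \emph{quoting}: note that Theorem~\ref{th:imtau2} is stated here without proof, as a citation from \cite{Compagnoni2013a}. The acknowledged obstacle you flag---the sign bookkeeping in the spacelike region $E^+$ that pins down $U_1\cup U_2\cup U_3$ and distinguishes the one--root boundary arcs from the two--root interior---is exactly where the work lies in that reference, and your sketch does not yet carry it out (in particular, the identification of the admissible half--line $\Lambda_0(\bt)$ with the constraint $\lambda\leq 0$ and the verification that on $C\cap\bar U_i$ precisely one of $\pm\sqrt{-c/a}$ is admissible require the explicit formulas for $b(\bt),c(\bt)$ and the position of $\mb{L_0}(\bt)$ relative to the half--cone).

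The paper itself, however, offers a \emph{different} derivation in Section~\ref{sec:TDOAvsTDOA}: it factors $\bs{\tau_2}=\pi\circ\bs{\TT_3}$ through the linear projection $\pi(\TT_1,\TT_2,\TT_3)=(\TT_1-\TT_3,\TT_2-\TT_3)$ and reads off $\mbox{Im}(\bs{\tau_2})$ and the fiber cardinalities from the already established geometry of $\mbox{Im}(\bs{\TT_3})\subset\bar S$. The key observation is that $\pi$ is projection from the ideal node $q=(0{:}1{:}1{:}1)$ of the Kummer quartic $\bar S$, so each line $r_{\bt}=\pi^{-1}(\bt)$ meets $\bar S$ at $q$ with multiplicity $\geq 2$ and hence meets $S$ in at most two further points; the tangent cone to $\bar S$ at $q$ is the elliptic cylinder whose projection is precisely $E$, and the six tropes through $q$ project to $\partial P_2$. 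Your quadratic \eqref{eq-deg-2} reappears in this picture as \eqref{eq:TDOAeqfromTOA}, the restriction of the Kummer equation to $r_{\bt}$, after the substitution $t=-\lambda\Vert\mb{d_{31}}\wedge\mb{d_{32}}\Vert$. What this buys over the direct approach is that the trichotomy $E^-/E/E^+$ and the role of the tangency points $T_i^\pm$ acquire a transparent projective meaning (interior/boundary/exterior of the tangent cone, and the six tropes), and the admissibility condition $\lambda\leq 0$ becomes simply ``the intersection point lies in the first octant,'' so the global injectivity of $\bs{\TT_3}$ immediately converts the count $|r_{\bt}\cap\mbox{Im}(\bs{\TT_3})|$ into $|\bs{\tau_2}^{-1}(\bt)|$. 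Your approach is more self--contained (no TOA surface needed), but the paper's projection argument replaces the delicate sign analysis you still owe with a single algebro--geometric fact about projecting a quartic from a node.
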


\begin{figure}[htb]
\begin{center}
\resizebox{5cm}{!}{
  \includegraphics[bb=227 296 384 495]{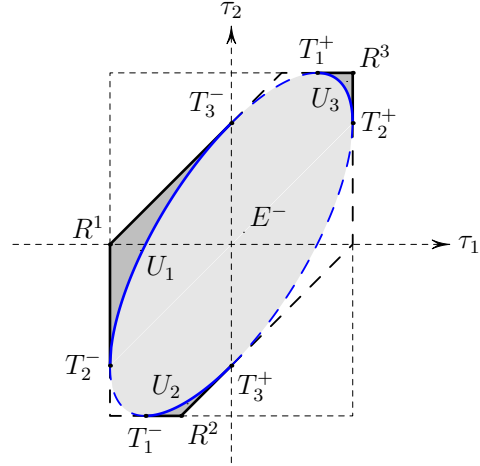}}
  \caption{\label{fig:tauimage}The image of $ \bs{\tau_2}$ is the
  gray subset of $P_2$. In the light gray region marked as $E^-$ the map
  $ \bs{\tau_2} $ is $ 1$--to--$1$, while in the medium gray
  regions $U_0 \cup U_1 \cup U_2$ the map $ \bs{\tau_2} $ is
  $1$--to--$2$. The continuous part of $\partial P_2$ and $E$,
  and the vertices $R^i$, are in the image, where $ \bs{\tau_2}$
  is $ 1$--to--$1$. The dashed part of $\partial P_2$ and $E$,
  and the tangency points $T_i^\pm$, do not belong to
  Im($\bs{\tau_2}$).}
\end{center}
\end{figure}
\noindent
Finally, for any given $ \bs{\tau} \in \mbox{Im}(\bs{\tau_2}) $ and a
negative solution $ \lambda $ of equation \eqref{eq-deg-2}, we
have the corresponding preimage in the $x$--plane (i.e. the solution of the localization problem, see Equation (20) in \cite{Compagnoni2013a}):
\begin{equation}
\label{eq:inv-image} \mb{x}(\bs{\tau}) = \mb{L_0}(\bs{\tau}) +
\lambda \ast((\tau_1 \mb{d_{32}} - \tau_2 \mb{d_{31}}) \wedge
\mb{e_3}).
\end{equation}

\subsection{The special case: $\m{1},\m{2},\m{3}$
collinear}

As assumed in Section \ref{sec:collinear-microphones} and without loss of generality, we take $ \m{3}\in r_3^0. $ This is equivalent to $\mb{d_{31}} = \rho\,\mb{d_{21}},$ where $0<\rho<1.$
By Proposition \ref{thm:P2}, in this configuration the polygon $P_2$ has only four sides. The image of $\bs{\tau_2}$ can be studied by using similar arguments as in the general case and it is depicted in Figure \ref{fig:tauimagespecial}.
\begin{thm}[$\!\!$\cite{Compagnoni2013a}, Theorem 7.5]\label{th:imspecial}
Let us assume that $\mb{d_{31}} = \rho\,\mb{d_{21}},$ where $0<\rho<1.$ Then, the image of $ \bs{\tau_2} $ is the triangle $ T $ with vertices $R^1,R^2,R^3$ minus the open segment with endpoints $R^1,R^2.$
Moreover, given $\bs{\tau} \in \mbox{Im}(\bs{\tau_2}),$ we have
$$ \vert \bs{\tau_2}^{-1}(\bs{\tau}) \vert =
\left\{
\begin{array}{cl}
\infty & \mbox{ if } \bs{\tau}=R^1\ \text{or}\ R^2\\
2 & \mbox{ if } \bs{\tau} \in \mathring{T}\\
1 & \mbox{ otherwise} \end{array} \right. .$$
\end{thm}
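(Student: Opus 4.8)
The plan is to deduce the statement from the analysis of the collinear range map $\bs{\TT_3}$ in Section~\ref{sec:collinear-microphones}, using that $\tau_i(\x)=d_i(\x)-d_3(\x)$ gives the factorisation $\bs{\tau_2}=\pi\circ\bs{\TT_3}$, where $\pi:\RR^3\to\RR^2$ is the linear projection $\pi(\TT_1,\TT_2,\TT_3)=(\TT_1-\TT_3,\,\TT_2-\TT_3)$. By Theorem~\ref{headache2} and Proposition~\ref{headache} we already know that $\mathrm{Im}(\bs{\TT_3})=\sigma\cap Q_3$ is the graph $G_\varphi$ of the map $\varphi$ of~\eqref{eq:T3map} over the polyhedron $Q_2$ of Theorem~\ref{prop:caso2mic}, that $\bs{\TT_3}$ is $2$--to--$1$ over $\mathring{Q}_3\cap G_\varphi$ (the relative interior of $G_\varphi$) and $1$--to--$1$ over $\partial Q_3\cap G_\varphi$ (the four boundary edges $\varphi(l_+),\varphi(l_-),\varphi(s)_+,\varphi(s)_-$), and that in the present configuration $Q_3$ is cut out by the four inequalities~\eqref{eq:Q3s}. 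I would fix coordinates with $\m{1},\m{3},\m{2}$ on the $x$--axis and $\m{3}$ between the other two, so $d_{31}+d_{32}=d_{21}$, and record $R^1=\bs{\tau_2}(\m{1})=(-d_{31},d_{32})$, $R^2=\bs{\tau_2}(\m{2})=(d_{31},-d_{32})$, $R^3=\bs{\tau_2}(\m{3})=(d_{31},d_{32})$. Since $R^1=-R^2$, the segment $R^1R^2$ is the diagonal through the origin of the rectangle $P_2=[-d_{31},d_{31}]\times[-d_{32},d_{32}]$, and $T$ is precisely the closed half of $P_2$ lying in $\{d_{32}\tau_1+d_{31}\tau_2\ge0\}$, the side containing $R^3$.

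\emph{Behaviour of $\pi$ on $\sigma$.} Substituting $\TT_1=\TT_3+\tau_1$ and $\TT_2=\TT_3+\tau_2$ into the equation $(1-\rho)\TT_1^2+\rho\TT_2^2-\TT_3^2-\rho(1-\rho)d_{21}^2=0$ of $\sigma$, the $\TT_3^2$ terms cancel and one is left with the linear relation $2\big((1-\rho)\tau_1+\rho\tau_2\big)\TT_3=\rho(1-\rho)d_{21}^2-(1-\rho)\tau_1^2-\rho\tau_2^2$. Hence $\pi$ is injective on $\sigma$ off the plane $(1-\rho)\tau_1+\rho\tau_2=0$; on $\sigma$ that plane is the locus $(\TT_1-\TT_2)^2=d_{21}^2$, which inside $Q_3\subset W$ is exactly the two unbounded edges $\varphi(l_+),\varphi(l_-)$. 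Direct substitution shows $\pi$ collapses $\varphi(l_+)$ to $R^2$ and $\varphi(l_-)$ to $R^1$, while over every other point of that plane $\pi^{-1}(\bt)\cap\sigma=\varnothing$. Three consequences follow at once: $|\bs{\tau_2}^{-1}(R^1)|=|\bs{\tau_2}^{-1}(R^2)|=\infty$ (they are the images of the two half--lines of $r$ outside the segment $\m{1}\m{2}$); the open diagonal of $R^1R^2$, being contained in that plane but disjoint from $\{R^1,R^2\}$, is not in $\mathrm{Im}(\bs{\tau_2})=\pi(\sigma\cap Q_3)$; and since $Q_3\subseteq\{d_{32}\TT_1+d_{31}\TT_2-d_{21}\TT_3\ge0\}$ by~\eqref{eq:Q3s} while $\mathrm{Im}(\bs{\tau_2})\subseteq P_2$ by the triangular inequalities, $\mathrm{Im}(\bs{\tau_2})\subseteq P_2\cap\{d_{32}\tau_1+d_{31}\tau_2\ge0\}=T$.

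\emph{Fibre counting and surjectivity.} On $G_\varphi\setminus(\varphi(l_+)\cup\varphi(l_-))$ the map $\pi$ is injective, so there $\bs{\tau_2}$ inherits the fibre cardinalities of $\bs{\TT_3}$: by Theorem~\ref{headache2} this is $2$ over the relative interior of $G_\varphi$ and $1$ over $\varphi(s)_\pm$. Pushing the description of $\partial Q_2$ and of $\varphi|_{\partial Q_2}$ from Proposition~\ref{headache} through $\Phi(\TT_1,\TT_2):=(\TT_1-\varphi(\TT_1,\TT_2),\,\TT_2-\varphi(\TT_1,\TT_2))$, a short computation gives $\Phi(l_+)=\{R^2\}$, $\Phi(l_-)=\{R^1\}$ and $\Phi(s)=\overline{R^2R^3}\cup\overline{R^3R^1}$, so $\Phi(\partial Q_2)=\partial T\setminus(\text{open }R^1R^2)$, and $\bs{\tau_2}$ is $1$--to--$1$ there away from $R^1,R^2$. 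Finally $\Phi|_{\mathring{Q}_2}$ is a continuous injection into $\RR^2$, hence open by invariance of domain; its image lies in $\mathring{T}$ (it never meets $\overline{R^2R^3}\cup\overline{R^3R^1}$, which are reached only from $s$, never the open diagonal, and a value in $\{R^1,R^2\}$ would force $(\TT_1,\TT_2)\in l_\pm$), and it is closed in $\mathring{T}$, since a sequence of $\Phi$--values converging inside $\mathring{T}$ cannot have preimages approaching $\partial Q_2$ (there $\Phi\in\partial T$) nor escaping to infinity in $Q_2$ (using $|\TT_1-\TT_2|\le d_{21}$ and $\TT_i\to\infty$ one gets $\varphi=\TT_1-\rho(\TT_1-\TT_2)+o(1)$, so $\Phi$ tends to the closed diagonal $\overline{R^1R^2}\subset\partial T$). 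As $\mathring{T}$ is connected, $\Phi(\mathring{Q}_2)=\mathring{T}$, whence $\mathrm{Im}(\bs{\tau_2})=T\setminus(\text{open }R^1R^2)$ with $\bs{\tau_2}$ exactly $2$--to--$1$ over $\mathring{T}$.

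\emph{The main obstacle.} The one genuinely delicate step is this last surjectivity argument on the unbounded polyhedron $Q_2$: controlling the limits of $\Phi$ at the (unbounded) boundary and at infinity so as to conclude that the open image is all of $\mathring{T}$ rather than a proper subset. A computational way around invariance of domain is to exhibit the inverse explicitly: for $\bt\in T$ with $(1-\rho)\tau_1+\rho\tau_2>0$ let $\TT_3$ be the value given by the linear relation above, put $\TT_i=\TT_3+\tau_i$, check by substitution the four inequalities~\eqref{eq:Q3s} defining $Q_3$, and set $\x=\bs{\TT_3}^{-1}(\TT_1,\TT_2,\TT_3)$. Alternatively one can argue entirely within the $\tau$--picture: by Lemma~\ref{LemmaTriang} all level sets $B_i(\bt)$ are coaxial conic branches with axis $r$, hence symmetric about $r$ and meeting $\{y>0\}$ in graphs over the $y$--coordinate, and the triangular inequalities together with the Stewart--type inequality $d_{32}d_1(\x)+d_{31}d_2(\x)\ge d_{21}d_3(\x)$ (an equality exactly when $\x$ lies on $r$ outside $\m{1}\m{2}$) cut out $T$; but the factorisation route above is the shortest given the results already in hand.
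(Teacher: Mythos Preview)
Your argument is correct and follows the same route the paper takes in Section~\ref{sec:TDOAvsTDOA}: factor $\bs{\tau_2}=\pi\circ\bs{\TT_3}$, observe that the fibres $r_{\bt}$ of $\pi$ meet $\sigma$ in at most one affine point because the substitution into the quadric is linear in $\TT_3$, identify the degenerate fibre locus $(1-\rho)\tau_1+\rho\tau_2=0$ with the tangent plane to $\bar\sigma$ at the ideal point $q=(0{:}1{:}1{:}1)$, and read off the fibre cardinalities from Theorem~\ref{headache2}. The paper phrases the degeneracy in projective language (the tangent plane $H$ at $q$ and the two rulings $l_1,l_2$ through $q$), while you arrive at the same conclusion by the explicit cancellation of the $\TT_3^2$ term; these are the same computation.

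The one substantive difference is your surjectivity step. The paper simply notes that for $\bt\in P_2$ off the diagonal the unique intersection $r_{\bt}\cap\sigma$ lies in $Q_3$, leaning on the description of $\sigma\cap Q_3$ from Proposition~\ref{headache}. You instead set up $\Phi=\pi\circ(\mathrm{id},\varphi)$ on $Q_2$ and invoke invariance of domain together with a boundary/infinity analysis to show $\Phi(\mathring{Q}_2)=\mathring{T}$. This works, and your asymptotic $\varphi=\TT_1-\rho(\TT_1-\TT_2)+o(1)$ correctly shows that escape to infinity in $Q_2$ pushes $\Phi$ to the closed diagonal, but it is heavier than necessary. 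The explicit inverse you mention at the end---solve the linear relation for $\TT_3$, set $\TT_i=\TT_3+\tau_i$, and check the four inequalities~\eqref{eq:Q3s} directly---is both shorter and closer to what the paper does, and avoids the topological detour entirely.
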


\begin{figure}[htb]
\begin{center}
\resizebox{5.3cm}{!}{
  \includegraphics[bb=220 311 391 480]{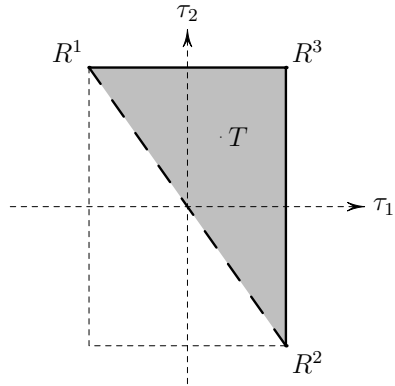}}
  \caption{\label{fig:tauimagespecial}The image of $ \bs{\tau_2}$
  under the assumption that $\m{3}$ lies on the segment between
  $\m{1}$ and $\m{2}$. In the gray region $T$ the map $ \bs{\tau_2} $
  is $ 2$--to--$1$. Along the horizontal and vertical sides of $T$ the
  map is $ 1$--to--$1,$ with the exception of the vertices $R^1,R^2$,
  where the fibers of $\bs{\tau_2}$ are not finite. Finally, the dashed
  side of $T$ is not in Im($\bs{\tau_2}$).  }
\end{center}
\end{figure}


\section{The comparison of the two localization models}\label{sec:TDOAvsTDOA}

In the previous section we found many similarities between the source localization problems involving TOA and TDOA measurements, respectively. Here, we investigate deeper into their relationship. In particular, we are going to define a natural map between the feasible sets of the two models, that allows us to recover all the information on the range differences from the knowledge of the ranges. The importance of the analysis contained in this section is twofold. First of all, from a theoretical perspective it gives an alternative way to the methodology proposed in \cite{Compagnoni2013a} for studying the range difference--based localization model. Moreover, this is the first step toward the comparison of the two localization models in presence of noisy measurements.

We start by recalling the definitions of the deterministic models:
$$
\text{TOA}: \left\{\begin{array}{l}
\TT_1(\x) = d_1(\x) \\
\TT_2(\x) = d_2(\x) \\
\TT_3(\x) = d_3(\x)
\end{array} \right. \qquad\qquad
\text{TDOA}: \left\{ \begin{array}{l}
\tau_1(\x) = d_1(\x) - d_3(\x) \\
\tau_2(\x) = d_2(\x) - d_3(\x)
\end{array} \right. .
$$
It follows easily that
\begin{equation} \label{T-tau-rel}
\left\{ \begin{array}{l}
\TT_1(\x) - \TT_3(\x) = \tau_1(\x) \\
\TT_2(\x) - \TT_3(\x) = \tau_2(\x)
\end{array} \right. .
\end{equation}
This proves the following:
\begin{thm}\label{th:projTOATDOA}
Let us consider the map
\begin{equation*}
\begin{array}{cccc}
\pi: & \RR^3 & \longrightarrow & \RR^2\\
& (\TT_1,\TT_2,\TT_3) & \longmapsto & (\TT_1-\TT_3,\TT_2-\TT_3)
\end{array}.
\end{equation*}
Then, $\bs{\tau_2}=\pi\circ\bs{\TT_3}$ and so $ \mbox{Im}(\bs{\tau_2}) = \pi(\mbox{Im}(\bs{\TT_3})). $
\end{thm}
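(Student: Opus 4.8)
# Proof Proposal for Theorem \ref{th:projTOATDOA}

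The plan is to verify the factorization $\bs{\tau_2} = \pi \circ \bs{\TT_3}$ pointwise on $\RR^2$, and then deduce the image identity by a standard surjectivity-onto-image argument. First I would observe that for every source position $\x \in \RR^2$, the definitions give $\bs{\TT_3}(\x) = (d_1(\x), d_2(\x), d_3(\x))$ and $\bs{\tau_2}(\x) = (\tau_1(\x), \tau_2(\x)) = (d_1(\x) - d_3(\x), d_2(\x) - d_3(\x))$ by Definition \ref{def:TDOA-map} together with \eqref{eq:TDOAdef}. Applying $\pi$ to $\bs{\TT_3}(\x)$ yields $\pi(d_1(\x), d_2(\x), d_3(\x)) = (d_1(\x) - d_3(\x), d_2(\x) - d_3(\x))$, which is exactly $\bs{\tau_2}(\x)$. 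This is precisely the content of the relations \eqref{T-tau-rel}, so the first claim holds with no further work.

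For the image identity, I would argue both inclusions. Since $\bs{\tau_2} = \pi \circ \bs{\TT_3}$, for any $\bs{\tau} \in \mbox{Im}(\bs{\tau_2})$ there is an $\x$ with $\bs{\tau} = \bs{\tau_2}(\x) = \pi(\bs{\TT_3}(\x))$, and $\bs{\TT_3}(\x) \in \mbox{Im}(\bs{\TT_3})$, hence $\bs{\tau} \in \pi(\mbox{Im}(\bs{\TT_3}))$. Conversely, if $\bs{\tau} \in \pi(\mbox{Im}(\bs{\TT_3}))$, then $\bs{\tau} = \pi(\bs{\TT})$ for some $\bs{\TT} \in \mbox{Im}(\bs{\TT_3})$, and writing $\bs{\TT} = \bs{\TT_3}(\x)$ for a suitable $\x$ gives $\bs{\tau} = \pi(\bs{\TT_3}(\x)) = \bs{\tau_2}(\x) \in \mbox{Im}(\bs{\tau_2})$. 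This is just the elementary set-theoretic fact that the image of a composition $g \circ f$ equals $g$ applied to the image of $f$.

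There is essentially no obstacle here: the statement is a formal consequence of unwinding the definitions of the two range and range-difference maps. The only point worth stating carefully is that the factorization holds on the whole domain $\RR^2$ (both maps share the same domain), so that passing to images is automatic. If one wishes to be thorough, I would remark that $\pi$ restricted to $\mbox{Im}(\bs{\TT_3})$ — which by Theorem \ref{th:1oct} lies in the first octant and by Theorem \ref{th:Q3} inside $Q_3$ — is the linear projection along the direction $(1,1,1)$, so that $\mbox{Im}(\bs{\tau_2})$ can also be visualized geometrically as the shadow of the Kummer surface portion $\mbox{Im}(\bs{\TT_3})$ under this projection; but this observation is not needed for the proof itself and can be deferred to a remark.
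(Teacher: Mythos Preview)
Your proof is correct and follows exactly the same approach as the paper: the paper simply records the relations \eqref{T-tau-rel} obtained by subtracting the definitions and states that this proves the theorem. Your write-up is in fact more explicit than the paper's, spelling out both inclusions for the image identity, but the underlying idea is identical.
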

For any given point $ \bt=(\tau_1,\tau_2) $ in the $ \tau$--plane, the fiber $\pi^{-1}(\bt)$ in the $\TT$--space is the line
\begin{equation}\label{eq:rettaTT}
r_{\bt}: \left\{ \begin{array}{l}
\TT_1 - \TT_3 = \tau_1 \\
\TT_2 - \TT_3 = \tau_2
\end{array} \right.
\qquad\Rightarrow\qquad
r_{\bt}: \left\{ \begin{array}{l}
\TT_1(t) = \tau_1 + t\\
\TT_2(t) = \tau_2 + t\\
\TT_3(t) = t
\end{array} \right.,\quad t \in \RR.
\end{equation}
In geometric terms, this means that $\pi$ is the projection map along the lines $r_{\bt}$ parallel to the vector $(1,1,1).$ Equivalently, if we embed the $\TT$--space in $\PP_\RR^3$, the map $\pi$ is the projection from the ideal point $q=(0:1:1:1).$ Moreover, from \eqref{eq:rettaTT} we have a natural correspondence between the coordinates $(\TT_1,\TT_2)$ and $(\tau_1,\tau_2)$ by fixing $t=0.$ This allows us to identify the $\tau$--plane with the coordinate plane $ \TT_3=0 $ in the $ \TT$--space.

In the next subsections we look into the consequences of Theorem \ref{th:projTOATDOA} separately for the general and the aligned configurations of the receivers. We are interested in the full exploration of the relationship that incurs between the sets $\text{Im}(\bs{\TT_3})$ and $\text{Im}(\bs{\tau_2}),$ as well as the one between the polyhedrons $Q_3$ and $P_2$ containing them.

\subsection{$\m{1},\m{2},\m{3}$ not collinear}

In this scenario, the polyhedron $Q_3$ is defined by the $12$ inequalities in \eqref{eq:ineq12}. We note that the facets of $Q_3$ given by the first $ 6 $ inequalities are parallel to $(1,1,1),$ hence $\pi$ projects them onto $6$ line segments in the $\tau$--plane that delimit a convex polytope. Indeed, under the map $\pi,$ the $6$ inequalities become
$$
-d_{21} \leq \tau_1 - \tau_2 \leq d_{21},\qquad
-d_{31} \leq \tau_1 \leq d_{31},\qquad
-d_{32} \leq \tau_2 \leq d_{32},
$$
which define the polygon $ P_2 $ (see Definition \ref{def:P2}). Any other inequality in \eqref{eq:ineq12} defines an half--space bounded by a plane that is not parallel to $(1,1,1),$ whose projection is the entire $\tau$--plane. This proves that the projection of $ Q_3 $ is $ P_2.$

$\text{Im}(\bs{\TT_3})$ is contained in the algebraic surface $ S $ defined by equation \eqref{eq:imm2}. The projective closure of $S$ is the Kummer's surface $\bar{S}$ having equation \eqref{Kum-proj}, that is written using the rescaled homogeneous coordinates \eqref{eq:rescaledcoord}. The $16$ singular points of $ \bar{S}$ are given in \eqref{eq:puntisingolari}. In particular, one of the ideal singular points is $ (0: \sqrt{d_{32}} : \sqrt{d_{31}} : \sqrt{d_{21}}).$ In the original (homogeneous) range coordinates $(0:\TT_1:\TT_2:\TT_3),$ this is exactly the point $q$ from which we have defined the projection $\pi$.

This fact has important consequences on the projection of $S.$ Indeed, every line $r_{\bs{\tau}}$ meets $\bar{S}$ at $q$ with multiplicity at least $2.$ Since $\bar{S}$ is a quartic surface, it follows that each $r_{\bs{\tau}}$ intersects $\bar{S}$ in at most two other points. More precisely, over the complex field we have the following situation:
\begin{itemize}[leftmargin=*]
\item
The tangent cone $V$ to $\bar{S}$ at $q$ has equation
\begin{equation} \label{tang-cone}
d_{32}^2 \TT_1^2 - 2 \mb{d_{31}} \cdot \mb{d_{32}} \TT_1 \TT_2
+ 2 \mb{d_{21}} \cdot \mb{d_{32}} \TT_1 \TT_3 + d_{31}^2 \TT_2^2 -
2 \mb{d_{21}} \cdot \mb{d_{31}} \TT_2 \TT_3 + d_{21}^2 \TT_3^2 -
\Vert \mb{d_{31}} \wedge \mb{d_{32}} \Vert^2 = 0.
\end{equation}
In the affine space, $V$ is an elliptic cylinder. Its axis contains the origin and it is parallel to $(1,1,1)$. By construction, the lines $ r_{\bs{\tau}} $ on $V$ meet $\bar{S}$ at $q$ with multiplicity $3,$ therefore they intersect $S$ at exactly one point.
\item
The lines $ r_{\bs{\tau}} $ on the tropes $\TT_i - \TT_j = \pm d_{ji},\ 1\leq i< j\leq 3,$ are tangent to $\bar{S},$ therefore they intersect $S$ at exactly one point with multiplicity $2.$
\item
In any other case, the line $ r_{\bs{\tau}} $ meets $S$ at two distinct point.
\end{itemize}

In the rest of our analysis, we use the identification of the plane $\TT_3=0$ and the $\tau$--plane. This way, the projection of $V$ can be calculated by substituting $\TT_3=0$ in Equation \ref{tang-cone}. We get
$$
d_{32}^2 \tau_1^2 - 2\, \mb{d_{31}} \cdot \mb{d_{32}}\, \tau_1 \tau_2
+ d_{31}^2 \tau_2^2 - \Vert \mb{d_{31}} \wedge \mb{d_{32}} \Vert^2
= 0,
$$
that defines the ellipse $ E $ tangent to every facet of $ P_2 $ (see Definition \ref{def:atau} and Proposition \ref{atau}). The projection of the tropes parallel to $(1,1,1)$ are the lines supporting $\partial P_2.$

Theorem \ref{th:projTOATDOA} states that the structure of $\text{Im}(\bs{\tau_2})$ depends on the intersection $ r_{\bs{\tau}}\cap\text{Im}(\bs{\TT_3}).$ This fact can be seen directly by substituting the parametric description \eqref{eq:rettaTT} of $ r_{\bs{\tau}} $ into Equation \eqref{eq:imm2} that defines $S.$ We obtain the following quadratic equation in $t:$
\begin{equation}\label{eq:TDOAeqfromTOA}
\begin{array}{c}
t^2 (\Vert \tau_1 \mb{d_{32}} - \tau_2 \mb{d_{31}} \Vert^2 -
\Vert \mb{d_{31}} \wedge \mb{d_{32}} \Vert^2) -
4 t \langle \tau_1\mb{d_{32}} - \tau_2 \mb{d_{31}}, \Vert D_{31}(\bs{\tau}) \Vert^2\mb{d_{32}} - \Vert D_{32}(\bs{\tau}) \Vert^2 \mb{d_{31}} \rangle+\\
+ \left\Vert \Vert D_{31}(\bs{\tau}) \Vert^2 \mb{d_{32}} - \Vert
D_{32}(\bs{\tau}) \Vert^2 \mb{d_{31}} \right\Vert^2 = 0.
\end{array}
\end{equation}
If we set the parameter $ t = - \lambda \Vert \mb{d_{31}} \wedge \mb{d_{32}}\Vert,$ Equation \eqref{eq:TDOAeqfromTOA} matches Equation \eqref{eq-deg-2}. Moreover, since $\text{Im}(\bs{\TT_3})$ is contained in the first octant of the $\TT$--space, we have to search for the non negative solutions $t$ of \eqref{eq:TDOAeqfromTOA}, or equivalently, for the non positive solutions $\lambda.$ This way we come back to the same problem addressed in \cite{Compagnoni2013a} in order to compute the image of $ \bs{\tau_2}$ (see Section \ref{sec:TDOA-summary-general}).

Now, we can interpret the results given in Theorem \ref{th:imtau2} and depicted in Figure \ref{fig:tauimage} from the point of view of the projection map $\pi.$ A visual illustration of this discussion is given in Figure \ref{fig:TOA-TDOA}. Preliminary, let us observe that the global injectivity of $\bs{\TT_3}$ implies that
$$
|\bs{\tau_2}^{-1}(\bt)|=|(\pi\circ\bs{\TT_3})^{-1}(\bt)|=|\pi^{-1}(\bt)|.
$$
\begin{itemize}[leftmargin=*]
\item
First of all, since $\text{Im}(\bs{\TT_3})=S\cap Q_3,$ the intersection is not empty only if $\bt\in P_2.$ Thus, $\text{Im}(\bs{\tau_2})\subseteq P_2.$
\item
Let $\bt$ be on the facet $F_1^+$ of $P_2,$ as the other cases are similar. If $\bt$ lies between $T_1^+$ and $R^3,$ then $r_{\bt}$ intersects once $\text{Im}(\bs{\TT_3})$ along the unbounded arc of hyperbola $\bs{\TT_3}(r_1^+).$ In this case the localization is unique and the preimage $\bs{\tau_2}^{-1}(\bt)$ lies on the degeneracy locus $D$ of the range difference map. On the other hand, for $\bt$ lying on the complementary segment in $F_1^+,$ the line $r_{\bt}$ intersects $S$ in the negative octant and so $\bt\notin\text{Im}(\bs{\tau_2}).$
\item
If $\bt\in U,$ then $ r_{\bt} $ meets $\text{Im}(\bs{\TT_3})$ at two distinct points and so the TDOA map is $2$--to--$1$. As $\bt$ approaches $E,$ the line $ r_{\bt} $ comes close to the cylinder $V$ and one of the two intersection points goes to infinity. This confirms the fact that the ellipse $E$ is the transition curve that bounds the set of range differences having unique localization.
\item
If $\bt\in E^-,$ then $ r_{\bs{\tau}}$ stays in the interior of the cylinder $ V$ and meets $ \mbox{Im}(\bs{\TT_3}) $ at one point, the other intersection point with $ S $ being in the negative octant. In this situation, the range difference map is $1$--to--$1.$
\item
The remaining points $\bt\in P_2$ define lines $ r_{\bs{\tau}}$ in $Q_3$ that intersect $S$ at two points having negative coordinates, therefore $\bt\notin\mbox{Im}(\bs{\tau_2}).$
\end{itemize}
\begin{figure}[htb!]
\centering
 \includegraphics[width=8cm,bb=178 270 433 521]{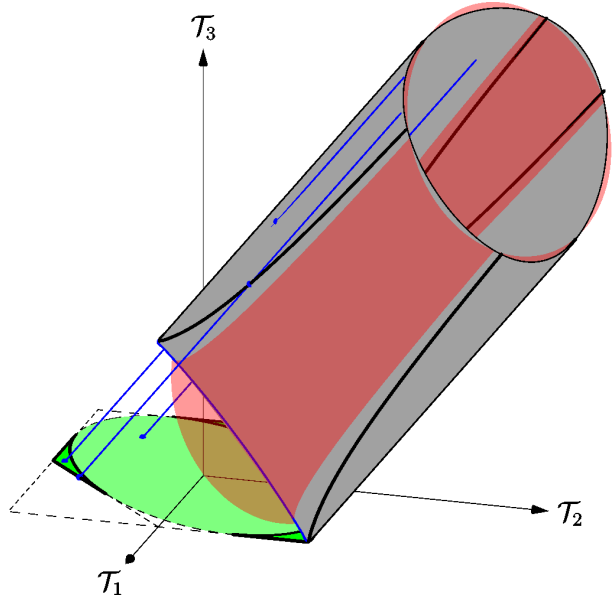}
 \caption{$\text{Im}(\bs{\tau_2})$ can be identified with the green region on the plane $\TT_3=0.$ It is the projection of $\text{Im}(\bs{\TT_3}),$ depicted in gray, along the blue lines $r_{\bt}.$ The light red surface is the tangent cone $V$ to the Kummer's surface $\bar{S}$ at the ideal point $q.$ We represent in transaparency $V,$ but not $\text{Im}(\bs{\TT_3}).$ The projection of $V$ is the ellipse $E.$ For $\bt\in E^-,$ the line $r_{\bt}$ stays in the interior of $V$ and intersects $\text{Im}(\bs{\TT_3})$ at one point. For $\bt\in U_i,\ i=1,2,3,$ the line $r_{\bt}$ stays in the exterior of $V$ and intersects $\text{Im}(\bs{\TT_3})$ at two distinct points. If $\bt\in \partial P_2,$ the line $r_{\bt}$ meets $\text{Im}(\bs{\TT_3})$ at one point on the black hyperbola, with multiplicity two.}\label{fig:TOA-TDOA}
 \end{figure}
\begin{rem}
From the above analysis follows that in presence of range measurements, we have to switch toward the range difference--based localization when there is no information along the direction $(1,1,1)$ of the $\TT$--space. This is the case if the source is not synchronized with the receivers (e.g in GPS localization, where the clock of the GPS receiver is not synchronized with the ones of the satellites). By supposing that the measurements are affected by the same bias $b,$ we have that $\widehat{\TT}_i=\TT_i+b,\ i=1,2,3.$ If $b$ is completely unknown, then $\widehat{\bs\TT}$ could be anywhere on $r_{\bt},$ where $\bt=\pi(\widehat{\bs\TT}).$ However, if there are some information on $b,$ then the probability density function of $\widehat{\bs\TT}$ along $r_{\bt}$ is not uniform and one should use this fact to improve the accuracy of the source estimation. In geometric terms, this corresponds to find the best line $r_{\bt}$ for $\text{Im}(\bs{\TT_3}),$ with respect to a suitable metric.
\end{rem}
\begin{rem}
The geometric construction we used above is a classical tool in the study of algebraic varieties. In particular, also Kummer's surfaces have been investigated through their projection from a singular point onto a projective plane $\mathcal{H}$ (see \cite{hudson}, Chapter XVIII). Over $\CC$ it is known that this projection identifies any Kummer's surface $\mathcal{S}\subset\PP^3_\CC$ to a double cover of $\mathcal{H}$, ramified along a sextic plane curve $\mathcal{C}$. This means that the line through the node and every point $p\in\mathcal{H}$ meets $\mathcal{S}$ at two other points, coinciding if, and only if, $p$ belongs to $\mathcal{C}.$

In the special case of a tetrahedroid surface one recovers some of the above results. In particular, $\mathcal{C}$ is the union of six lines tangent to a conic, which is obtained as the intersection of the tangent cone to $\mathcal{S}$ at the node and $\mathcal{H}$.

The novelty in our analysis is that we work over the field $\RR$ and we project only the subset $\text{Im}\bs(\TT_3)$ of the surface $S$ given by equation \eqref{eq:imm2}. As a consequence, the projection behaves in a different way, for example it is constrained by the bounded set $P_2$ and it is $1$--to--$1$ on the open set $E^-.$
\end{rem}
\begin{rem}
As done in Section \ref{sec:TOA-3D}, we can consider the TDOA--based localization for a source not contained in the plane of the receivers. Also this problem can be encoded into the TDOA map:
\begin{center}
$\begin{matrix} \bs{\tau_{3,2}}&: \mathbb{R}^{3}& \rightarrow & \mathbb{R}^{2}
\\ &\mb{x} &\mapsto& \left(\tau_1\left(\mb{x}\right), \tau_2\left(\mb{x}\right)\right)
\end{matrix}\ .$
\end{center}
As in Theorem \ref{th:projTOATDOA}, $\bs{\tau_{3,2}}=\pi\circ\bs{\TT_{3,3}}$ and so $\text{Im}(\bs{\tau_{3,2}})=\pi(\text{Im}(\bs{\TT_{3,3}}))=\text{Im}(\bs{\tau_2}).$ Given a feasible $\bt=(\tau_1,\tau_2),$ its inverse image $\bs{\tau_{3,2}}^{-1}(\bt)$ contains the points of the sensors plane $\x(\bt)$ given by equations \eqref{eq:inv-image}.

The admissible source position outside the plane can be computed from system \eqref{eq:mink-cone-planes} interpreted in the Minkowski space $\RR^{3,1}.$ The first two equations of the system define a two dimensional plane orthogonal to the receivers plane. It meets the three dimensional half--cone given by the third equation and the last inequality along a conic curve.
\end{rem}

\subsection{$\m{1},\m{2},\m{3}$ collinear}

As usual, we assume that $\m{3}\in r_3^0,$ therefore $\mb{d_{31}} = \rho\, \mb{d_{21}} $ with $ 0 < \rho < 1.$ The polyhedron $Q_3$ is defined by \eqref{eq:Q3s}. It has $4$ facets, three of which are parallel to $(1,1,1).$ The projection of $Q_3$ through $\pi$ is the triangle $P_2$ depicted in Figure \ref{fig:tauimagespecial}, with vertices $R^1,R^2,R^3$ and defined by inequalities
$$
\tau_1 \leq d_{31}, \qquad \tau_2 \leq d_{32}
\qquad (1-\rho)\tau_1 + \rho \tau_2 \geq 0.
$$

$\text{Im}(\bs{\TT_3})$ is contained in the one--sheet hyperboloid $ \sigma $ whose defining equation is
$$
(1-\rho) \TT_1^2 + \rho \TT_2^2 - \TT_3^2 - \rho(1-\rho) d_{21}^2 = 0.
$$
The point $q$ is in the projective closure $\bar\sigma$ of the quadric surface $\sigma$. The tangent plane $H$ to $\bar\sigma$ at $q$ has equation
$$
(1-\rho)\TT_1+\rho\TT_2-\TT_3=0.
$$
$H$ is parallel to $(1,1,1)$ and it supports the facet of $Q_3$ given by the last inequality in \eqref{eq:Q3s}. From Section \ref{sec:collinear-microphones}, it follows that $H$ intersects $\sigma$ along the two lines $l_1,l_2$ supporting the unbounded edges of $Q_3$ and containing the vertices $(0,d_{21},d_{31}),(d_{21},0,d_{32})$, respectively (see Figure \ref{fig:TOAdegsurf}).
Using these arguments and the property that every line $r_{\bt}$ meets $\bar\sigma$ at $q,$ it is simple to check the following facts about the projection of $H$ and $\sigma$.
\begin{itemize}[leftmargin=*]
\item
The lines $r_{\bt}$ containing $R^1$ and $R^2$ are $l_1$ and $l_2.$ As said above, they lie entirely on $\sigma$ and their projections are $\pi(l_i)=R^i,\ i=1,2.$ The projection $\pi(H)$ is the line in the $\tau$--plane passing through the two vertices $R^1,R^2.$
\item
The other lines $r_{\bt}$ on $H$ intersect $\bar\sigma$ only at $q$ with multiplicity $2,$ hence they do not meet $\sigma.$
\item
In any other case, the line $r_{\bt}$ meets $\sigma$ at one point.
\end{itemize}

We are now ready to recover all the results of Theorem \ref{th:imspecial} on the image of $\bs{\tau_2}$. At this purpose, we use again $\bs{\tau_2}^{-1}(\bt)=(\pi\circ\bs{\TT_3})^{-1}(\bt),$ together with Theorem \ref{headache2}:
\begin{align*}
\left| \bs{\TT}_{3}^{-1}\left(\bs{\TT}\right)\right| = \begin{cases}
2, \quad \text{if}\quad \bs{\TT} \in \mathring{Q}_{3} \cr
1, \quad \text{if}\quad \bs{\TT} \in \partial Q_{3} \end{cases} .
\end{align*}
\begin{itemize}[leftmargin=*]
\item
First of all, as $\text{Im}(\bs{\TT_3})$ is a subset of $Q_3,$ the points outside $P_2=\pi(Q_3)$ are not in $\text{Im}(\bs{\tau_2}).$
\item
On $\bt=R^1,R^2$ we have $|\pi^{-1}(\bt)|=\infty$ and so also $|\bs{\tau_2}^{-1}(\bt)|=\infty.$ The other points on the facet of $P_2$ containing $R^1,R^2$ are not in $\pi(\sigma),$ therefore they are not in $\text{Im}(\bs{\tau_2}).$
\item
Let $\bt$ lie on another facet of $P_2.$ Then, the line $r_{\bt}$ intersects $\sigma$ at one point on $\partial Q_3,$ thus $|\bs{\tau_2}^{-1}(q)|=1.$
\item
Finally, for any other $\bt\in P_2,$ the line $r_{\bt}$ intersects $\sigma$ at one point on $\mathring{Q}_3,$ thus $|\bs{\tau_2}^{-1}(\bt)|=2.$
\end{itemize}


\section{Conclusions and perspectives}\label{sec:conclusion}

In this manuscript we offered an exhaustive mathematical characterization of the deterministic models behind the range--based source localization, in the cases of two and three sensors. Our work is based on the analysis of the range maps $\bs{\TT_2}$ and $\bs{\TT_3}$. By assuming that the source is coplanar to the receivers, we studied the identifiability of the two models in both cases of an aligned and a generic configuration of the sensors. Then, we derived a complete characterization of Im$(\bs{\TT_2})$ and Im$(\bs{\TT_3}),$ that are the sets of feasible range measurements.
We found that Im$(\bs{\TT_2})$ is the unbounded convex polyhedron $Q_2$ of the $\TT$--plane. Assuming the sensors are not collinear, Im$(\bs{\TT_3})$ is the real portion of a Kummer's surface contained into the first octant of the $\TT$--space. By using some classical results on the Kummer's, we showed that Im$(\bs{\TT_3})$ is contained in the unbounded convex polyhedron $Q_3,$ strongly related to the convex hull $\mathcal{E}$ of Im$(\bs{\TT_3}).$ On the other hand, in the aligned sensors configuration we have that Im$(\bs{\TT_3})$ is a subset of a one sheet hyperboloid. In the last sections of the manuscript, we extended our treatment to the case of 3D localization and we used the above results to compare the deterministic range and range difference based localization models.

Our analysis followed the line drawn in \cite{Compagnoni2013a}. In particular, we used similar mathematical tools, that include multilinear algebra, the Minkowski space, algebraic and differential geometry. All along the paper we gave many comments about the relevance of our work for the deeper understanding and the resolution of several concrete problems concerning source localization. Here we come back to some of the relevant issues highlighted in the previous sections, that are currently under investigation by the authors. For the sake of simplicity, we consider only the localization with three non collinear receivers.

\emph{(1) Error propagation in presence of noisy range measurements.}
The description of the range model is instrumental for the analysis of the error propagation of whatever estimation algorithm of the source position. As observed in Remark \ref{rm:singmodels}, in the near field regime the behavior of the localization is particularly affected by the singular points of Im$(\bs{\TT_3})$. As a consequence, the usual investigation tools such as the asymptotic normal expansion could fail. To overcome such difficulties, we suggest to locally approximate Im$(\bs{\TT_3})$ at the singular points with their respective tangent cones, that are defined by equations similar to \eqref{eq:tgcone}. On the other hand, in the far field scenario the model is well approximated by the cylinder \eqref{tang-cone}. From a geometric standpoint, this is considerably simpler than the Kummer's quartic surface, thus it can be used to reduce the complexity of the problem.

\emph{(2) The estimation of the source position.}
In a noisy scenario, a vector of three range measurements $\widehat{\bs{\TT}}$ corresponds to a point in the $\TT$--space that lies close to the set Im($\bs{\TT_3}$).
The estimation of the source position is a two-step procedure: at first one finds the best $\overline{\bs{\TT}}\in\rm{Im}(\bs{\TT_3})$ from $\widehat{\bs{\TT}}$, then the source stays at $\bar\x=\bs{\TT_3}^{-1}(\overline{\bs{\TT}})$.
The point $\overline{\bs{\TT}}$ depends on the choice of the statistical estimator. In presence of Gaussian errors, the Fisher Information Matrix defines a Euclidean structure on the $\TT$--space. In this case, the Maximum Likelihood Estimation gives the point $\overline{\bs{\TT}}\in\rm{Im}(\bs{\TT_3})$ that is the orthogonal projection of $\widehat{\bs{\TT}}$ on the noiseless measurements set.
The resolution of the MLE for semialgebraic statistical models is a leading research problem in the area of Algebraic Statistics and, more in general, Applied Algebraic Geometry. With respect to the range based localization, some results on the Euclidean Degree of Cayley--Menger varieties are contained in \cite{Draisma2013}.
However, we observe that a CM variety is not the real set of noiseless range measurements, since it is defined in terms of the squared distances between the source and the receivers. We believe that the precise characterization of Im($\bs{\TT_3}$) and the several observations given in this manuscript (e.g. the description of the convex hull $\mathcal{E}$ and the comments contained at point $(1)$) will be surely helpful in the development of novel effective methods for solving MLE.

\emph{(3) Extension to general configurations of sources and receivers.}
The authors are currently working on the extension to arbitrary distributions of $r>3$ sensors and $n>1$ sources in the 3D Euclidean space, using similar techniques and notations. In particular, the general model can be encoded as well in a range map $\bs{\TT_{r,n}}$, whose image is again a real semialgebraic variety.
The description of $\bs{\TT_{r,n}}$ is needed also for the study of the localization with partially synchronized and calibrated receivers, that for example is the typical situation in wireless sensor network localization. In this scenario not all TOAs are available and, from the geometric point of view, this is equivalent to considering a projection of Im($\bs{\TT_{r,n}}$) to a smaller subspace of the measurements space (at this respect, see also the discussion in \cite{Compagnoni2013a}). On the basis of the results in Section \ref{sec:TDOAvsTDOA}, we finally deem that the analysis of $\bs{\TT_{r,n}}$ will simplify also the study of range difference--based localization model.


\section*{Acknowledgements}
The authors would like to thank R. Sacco for useful discussions
and suggestions during the preparation of this work.
\bigskip


\bibliographystyle{plain}   
\bibliography{biblio}       

\begin{thebibliography}{10}

\bibitem{Abraham1988}
R.~Abraham, J.~Marsden, and T.~Ratiu.
\newblock {\em Manifolds, tensor analysis, and applications, Second Edition}.
\newblock Springer Verlag, 1988.

\bibitem{Amari1982}
S.~Amari.
\newblock Differential geometry of curved exponential families-curvatures and
  information loss.
\newblock {\em The Annals of Statistics}, 10(2):357--385, 1982.

\bibitem{Amari2000}
S.~Amari and H.~Nagaoka.
\newblock {\em Methods of Information Geometry}.
\newblock American Mathematical Society, 2000.

\bibitem{Aspnes2006}
J.~Aspnes, T.~Eren, D.~Goldenberg, S.~Morse, W.~Whiteley, R.~Yang, B.~Anderson,
  and P.~Belhumeur.
\newblock A theory of network localization.
\newblock {\em IEEE Trans. Mobile Comput.}, 5:1663--1678, 2006.

\bibitem{Beck2008}
A.~Beck, P.~Stoica, and Jian Li.
\newblock Exact and approximate solutions of source localization problems.
\newblock {\em IEEE Transactions on Signal Processing (TSP)}, 56:1770--1778,
  2008.

\bibitem{Bellman1970}
R.~Bellman and K.~Astrom.
\newblock On structural identifiability.
\newblock {\em Mathematical Biosciences}, 7:329--339, 1970.

\bibitem{Beltrametti2009}
M.~Beltrametti, E.~Carletti, D.~Gallarati, and G.~Monti Bragadin.
\newblock {\em Lectures on curves, surfaces and projective varieties. A
  classical view of algebraic geometry}.
\newblock EMS Textbooks in Mathematics. European Mathematical Society, Zurich,
  2009.

\bibitem{Berry1992}
T.G. Berry.
\newblock Points at rational distance from the vertices of a triangle.
\newblock {\em ACTA ARITHMETICA}, LXII(4):391--398, 1992.

\bibitem{Bestagini2013}
P.~Bestagini, M.~Compagnoni, F.~Antonacci, A.~Sarti, and S.~Tubaro.
\newblock Tdoa-based acoustic source localization in the space--range reference
  frame.
\newblock {\em Multidimensional Systems and Signal Processing}, 25:337--359,
  2014.

\bibitem{Blekherman2012}
G.~Blekherman, P.A. Parrilo, and R.R. Thomas.
\newblock {\em Semidefinite optimization and convex algebraic geometry}.
\newblock Number~13 in MOS-SIAM Series on Optimization. SIAM, Philadelphia,
  2013.

\bibitem{Blumenthal1953}
L.~Blumenthal.
\newblock {\em Theory and Applications of Distance Geometry}.
\newblock Oxford University Press, 1953.

\bibitem{Borcea2002}
C.~Borcea.
\newblock Point configurations and cayley-menger varieties.
\newblock http://arxiv.org/abs/math/0207110, 2002.

\bibitem{Borcea2004}
C.~Borcea and I.~Streinu.
\newblock The number of embeddings of minimally rigid graphs.
\newblock {\em Discrete Comput. Geom.}, 31(2):287--303, 2004.

\bibitem{Caffery2000}
J.J. Caffery.
\newblock A new approach to the geometry of toa location.
\newblock {\em Proc. IEEE Veh. Technol. Conf. (VTC)}, 4(52):1943--1949, 2000.

\bibitem{Cheng2013}
Y.~Cheng, X.~Wangb, M.~Morelande, and B.~Moran.
\newblock Information geometry of target tracking sensor networks.
\newblock {\em Information Fusion}, 14:311--326, 2013.

\bibitem{Cheung2004a}
K.~W. Cheung, H.~C. So, W.~K. Ma, and Y.~T. Chan.
\newblock Least squares algorithms for time-of-arrival-based mobile location.
\newblock {\em IEEE Transactions on Signal Processing}, 52(4):1121--1128, April
  2004.

\bibitem{Cheung2004b}
K.W. Cheung, W.K. Ma, and H.C. So.
\newblock Accurate approximation algorithm for toa-based maximum likelihood
  mobile location using semidefinite programming.
\newblock In {\em IEEE International Conference on Acoustics, Speech and Signal
  Processing (ICASSP)}, volume~2, pages 145--148, 2004.

\bibitem{Cheung2005}
K.W. Cheung and H.C. So.
\newblock A multidimensional scaling framework for mobile location using
  time-of-arrival measurements.
\newblock {\em IEEE Transaction on Signal Processing}, 53(2):460--470, February
  2005.

\bibitem{Coll2009}
B.~Coll, J.~Ferrando, and J.~Morales-Lladosa.
\newblock Positioning systems in minkowski space-time: from emission to
  inertial coordinates.
\newblock {\em Classical Quantum Gravity}, 27:065013, 2010.

\bibitem{Coll2012}
B.~Coll, J.~Ferrando, and J.~Morales-Lladosa.
\newblock Positioning systems in minkowski space-time: Bifurcation problem and
  observational data.
\newblock {\em Phys. Rev. D}, 86:084036, Oct 2012.

\bibitem{Compagnoni2012}
M.~Compagnoni, P.~Bestagini, F.~Antonacci, A.~Sarti, and S.~Tubaro.
\newblock Localization of acoustic sources through the fitting of propagation
  cones using multiple independent arrays.
\newblock {\em IEEE Transactions on Audio, Speech, and Language Processing
  (TASLP)}, 20:1964--1975, 2012.

\bibitem{Compagnoni2016a}
M.~Compagnoni, A.~Canclini, P.~Bestagini, F.~Antonacci, and A.~Sarti.
\newblock Source localization and denoising: a perspective from the tdoa space.
\newblock {\em Journal on Multi-Dimensional Systems and Signal Processing},
  2016.

\bibitem{Compagnoni2013b}
M.~Compagnoni and R.~Notari.
\newblock Tdoa-based localization in two dimension: the bifurcation curve.
\newblock {\em Fundamenta Informaticae}, 135:199--210, 2014.

\bibitem{Compagnoni2013a}
M.~Compagnoni, R.~Notari, F.~Antonacci, and A.~Sarti.
\newblock A comprehensive analysis of the geometry of tdoa maps in localization
  problems.
\newblock {\em Inverse Problems}, 30(3):035004, 2014.

\bibitem{doCarmo}
M.P. do~Carmo.
\newblock {\em Differential geometry of curves and surfaces}.
\newblock Prentice-Hall, Inc., Englewood Cliffs, N.J., 1976.

\bibitem{Draisma2013}
J.~Draisma, E.~Horobet, G.~Ottaviani, B.~Sturmfels, and R.R. Thomas.
\newblock The euclidean distance degree.
\newblock In {\em Proceedings of the 2014 Symposium on Symbolic-Numeric
  Computation}, pages 9--16, 2014.

\bibitem{Drton2009}
M.~Drton, B.~Sturmfels, and S.~Sullivant.
\newblock {\em Lectures on Algebraic Statistics}, volume~40 of {\em Oberwolfach
  Seminars Series}.
\newblock Birkhauser, Basel, 2009.

\bibitem{Drton2007}
M.~Drton and S.~Sullivant.
\newblock Algebraic statistical models.
\newblock {\em Statistica Sinica}, 17(4):1273--1297, 2007.

\bibitem{Eren2004}
T.~Eren, D.~Goldenberg, W.~Whiteley, Y.~Yang, A.~Morse, B.~Anderson, and
  P.~Belhumeur.
\newblock Rigidity, computation, and randomization in network localization.
\newblock {\em IEEE Infocom Proc.}, 4:2673--2683, 2004.

\bibitem{Forman1994}
G.~Forman and J.~Zahorjan.
\newblock The challenges of mobile computing.
\newblock {\em IEEE Comput.}, 27:38--47, 1994.

\bibitem{Friedland2014}
S.~Friedland and M.~Stawiska.
\newblock Some approximation problems in semi-algebraic geometry.
\newblock arXiv:1412.3178v1 [math.AG], 2014.

\bibitem{hudson}
R.W.H.T. Hudson.
\newblock {\em Kummer's quartic surface, with a forward by W.Barth}.
\newblock Cambridge University Press, 1990.

\bibitem{Ichiki2013}
S.~Ichiki and T.~Nishimura.
\newblock Distance-squared mappings.
\newblock {\em Topology and Its Applications}, 160:1005--1016, 2013.

\bibitem{Jian2010}
L.~Jian, Z.~Yang, and Y.~Liu.
\newblock Beyond triangle inequality: Sifting noisy and outlier distance
  measurements for localization.
\newblock In {\em INFOCOM, 2010 Proceedings IEEE}, pages 1--9, 2010.

\bibitem{Kobayashi2013}
K.~Kobayashi and H.P. Wynn.
\newblock Asymptotically efficient estimators for algebraic statistical
  manifolds.
\newblock {\em Lecture Notes in Computer Science}, 8085:721--728, 2013.

\bibitem{Liberti2014}
L.~Liberti, C.~Lavor, N.~Maculan, and A.~Mucherino.
\newblock Euclidean distance geometry and applications.
\newblock {\em SIAM REVIEW}, 56(1):3--69, 2014.

\bibitem{Menger1928}
K.~Menger.
\newblock Untersuchungen uber allgemeine metrik.
\newblock {\em Mathematische Annalen}, 100:75--163, 1928.

\bibitem{Menger1931}
K.~Menger.
\newblock New foundation of euclidean geometry.
\newblock {\em American Journal of Mathematics}, 53:721--745, 1931.

\bibitem{Moran2012}
B.~Moran, S.D. Howard, and D.~Cochran.
\newblock An information-geometric approach to sensor management.
\newblock In {\em Acoustics, Speech and Signal Processing (ICASSP), 2012 IEEE
  International Conference on}, pages 5261--5264, 2012.

\bibitem{Navidi1998}
W.~Navidi, W.S. Murphy, and W.~Hereman.
\newblock Statistical methods in surveying by trilateration.
\newblock {\em Computational Statistics and Data Analysis}, 27(2):209--227,
  1998.

\bibitem{Pistone2001}
G.~Pistone, E.~Riccomagno, and H.P. Wynn.
\newblock {\em Algebraic statistics. Computational commutative algebra in
  statistics.}, volume~89 of {\em Monographs on Statistics and Applied
  Probability}.
\newblock Chapman and Hall/CRC, Boca Raton, 2001.

\bibitem{Quazi1981}
A.H. Quazi.
\newblock An overview on the time delay estimation in active and passive
  systems for target localization.
\newblock {\em IEEE Trans. Acoust., Speech, Signal Processing},
  ASSP-29:527--533, June 1981.

\bibitem{Savvides2001}
A.~Savvides, C.C. Han, and M.~Strivastava.
\newblock Dynamic fine-grained localization in ad-hoc networks of sensors.
\newblock In {\em Proceedings of the 7th Annual International Conference on
  Mobile Computing and Networking}, pages 166--179, 2001.

\bibitem{Schoenberg1935}
I.~Schoenberg.
\newblock Remarks to maurice fr\`echet's article ''sur la d\'efinition
  axiomatique d'une classe d'espaces distanci\`es vectoriellement applicable
  sur l'espace de hilbert''.
\newblock {\em Annals of Mathematics}, 36:724--732, 1935.

\bibitem{Shen2008}
G.~Shen, R.~Zetik, and R.S. Thoma.
\newblock Performance comparison of toa and tdoa based location estimation
  algorithms in los environment.
\newblock In {\em Proceedings of the 5th Workshop on Positioning, Navigation
  and Communication}, pages 71--78, 2008.

\bibitem{Shin2002}
D.H. Shin and T.K. Sung.
\newblock Comparisons of error characteristics between toa and tdoa
  positioning.
\newblock {\em IEEE Transactions on Aerospace and Electronic Systems},
  38(1):307--311, January 2002.

\bibitem{Sippl1986}
M.~Sippl and H.~Scheraga.
\newblock Cayley-menger coordinates.
\newblock {\em Proceedings of the National Academy of Sciences of the United
  States of America}, 83:2283--2287, 1986.

\bibitem{So2007}
H.~So and F.~Chan.
\newblock A generalized subspace approach for mobile positioning with
  time-of-arrival measurements.
\newblock {\em IEEE Transactions on Signal Processing}, 55(10):5103--5107,
  October 2007.

\bibitem{Watanabe1999}
S.~Watanabe.
\newblock Algebraic analysis for singular statistical estimation.
\newblock In {\em Algorithmic Learning Theory, 10th International Conference,
  ALT'99}, volume 1720 of {\em Lecture Notes in Computer Science}, pages
  39--50, 1999.

\bibitem{Wei2008}
H.W. Wei, Q.~Wan, Z.X. Chen, and S.F. Ye.
\newblock A novel weighted multidimensional scaling analysis for
  time-of-arrival-based mobile location.
\newblock {\em IEEE Transactions on Signal Processing}, 56(7):3018--3022, July
  2008.

\bibitem{Weiser1993}
M.~Weiser.
\newblock Some computer science issues in ubiquitous computing.
\newblock {\em Communications ACM}, 36:75--84, 1993.

\end{thebibliography}

\end{document}